\documentclass[11pt,oneside]{amsart}
\usepackage[pagebackref,breaklinks,unicode]{hyperref} 
\title{CAT(0) square complexes that do not embed into finite products of trees}

\author{James Davies}
\address{Faculty of Mathematics and Computer Science, Leipzig University, Germany}
\email{jgdavies@uwaterloo.ca}

\author{Harry Petyt}
\address{Mathematics Institute, University of Warwick, UK}
\email{harrypetyt@gmail.com}

\usepackage{amsmath, amssymb, amsthm} 
\usepackage[english]{babel} 
\usepackage[font=small,justification=centering]{caption,subcaption} 
\usepackage[nodayofweek]{datetime}
\usepackage{enumitem} \setlist{nosep} 
\usepackage[T1]{fontenc} 
\usepackage[a4paper]{geometry} 
\usepackage[utf8]{inputenc} 
    \usepackage{csquotes} 
\usepackage{ifthen} 
\usepackage{mathabx} 
\usepackage{mathtools} 
\usepackage[dvipsnames]{xcolor} 
\usepackage{setspace} 
\usepackage{textcomp} 
\usepackage{tikz-cd} 
\usepackage{xfrac} 
\usepackage[capitalise,noabbrev]{cleveref}

\let\OLDthebibliography\thebibliography \renewcommand\thebibliography[1]{   
    \OLDthebibliography{#1}\setlength{\parskip}{0pt}\setlength{\itemsep}{0pt plus 0.3ex}} 

\makeatletter\def\subsection{\@startsection{subsection}{1}\z@{.7\linespacing\@plus\linespacing}
    {.5\linespacing}{\normalfont\scshape\centering}}\makeatother 

\renewcommand*{\backrefalt}[4]{\ifcase #1 (Not cited).\or (Cited p.~#2).\else (Cited pp.~#2).\fi} 

\newcounter{shcount}
\newcounter{thmcount}
\newcounter{enumlabelcount}
\newcounter{claimcount}

\newcommand*{\bsh}[1]{\theoremstyle{definition}\newtheorem{subhead\theshcount}[theorem]{#1}
    \begin{subhead\theshcount}} 
\newcommand*{\esh}{\end{subhead\theshcount}\stepcounter{shcount}} 
\newcommand*{\ubsh}[1]{\theoremstyle{definition}\newtheorem*{subhead\theshcount}{#1}
    \begin{subhead\theshcount}} 
\newcommand*{\uesh}{\end{subhead\theshcount}\stepcounter{shcount}} 

\newcommand*{\numberedtheorem}[3]{\theoremstyle{plain}\newtheorem*{makethm\thethmcount}{#1}
    \ifthenelse{\equal{#2}{}}{\begin{makethm\thethmcount}#3\end{makethm\thethmcount}\stepcounter{thmcount}}
    {\begin{makethm\thethmcount}[#2]#3\end{makethm\thethmcount}\stepcounter{thmcount}}} 

\makeatletter\newcommand\enumlabel[1][]{\item[#1]
    \refstepcounter{enumlabelcount}\def\@currentlabel{#1}}\makeatother

\newenvironment{claim*}{\medskip\noindent\textbf{Claim:}\hspace{0.5mm}}{}

\newenvironment{claim*proof}{\medskip\noindent\emph{Proof of Claim.}\hspace{0.5mm}}
    {\leavevmode\unskip\penalty9999\hbox{}\nobreak\hfill\quad\hbox{$\diamondsuit$}\medskip}

\newcommand*{\eps}{\varepsilon}

\newcommand*{\N}{\mathbb{N}}
\newcommand*{\R}{\mathbb{R}}

\newcommand*{\W}{\mathcal{W}}

\newcommand*{\cal}{\mathcal}

\newcommand*{\ssm}{\smallsetminus}

\newcommand{\Cross}{{\begin{tikzpicture}
    \draw (1ex,0) -- (1ex,1.8ex); \draw (0.4ex,1.2ex) -- (1.6ex,1.2ex);
    \end{tikzpicture}}}

\DeclareMathOperator{\dist}{\mathsf{d}}
\DeclareMathOperator{\hull}{\mathsf{Hull}}

\newcommand{\ignore}[2]{\left\{\kern-.7ex\left\{#1\right\}\kern-.7ex\right\}_{#2}}

\newcommand*{\mk}{\medskip}

\definecolor{harrycomment}{rgb}{0.6,0,0.4}

\definecolor{jamescomment}{rgb}{0,0.5,0.5}

\newtheorem{mthm}{Theorem} 
\newtheorem{mcor}[mthm]{Corollary} 
\newtheorem*{conjecture*}{Conjecture}

\newtheorem{question}{Question}
\swapnumbers
\newtheorem{theorem}{Theorem}[section]
\newtheorem{lemma}[theorem]{Lemma} 

\newtheorem{proposition}[theorem]{Proposition}
\theoremstyle{definition}
\newtheorem{definition}[theorem]{Definition}

\newtheorem*{remark*}{Remark}

\begin{document}

\begin{abstract}


Answering a question of Chepoi and Hagen, we give two constructions of bounded degree CAT(0) square complexes that cannot be isometrically embedded into any finite product of trees.

The first is based on Burling graphs and has degree at most five, which is optimal.
These square complexes topologically embed into $\R^3$ since we further prove that every CAT(0) square complex whose vertices have degree at most five can be topologically embedded into the product of a line and a star.
It turns out that in this setting, Burling graphs are the only obstruction:
we prove that CAT(0) square complexes that topologically embed into $\R^3$ and whose crossing graph forbids some induced Burling graph can be isometrically embedded into a finite product of trees.

Our second CAT(0) square complex has degree at most six and its crossing graph forbids an induced Burling graph.
Of independent graph-theoretic interest, along the way we construct intersections of chordal and interval graphs with arbitrarily large girth and chromatic number. We also discuss connections to nice labellings of event structures.
\end{abstract}

\maketitle

\section{Introduction}

CAT(0) cube complexes arise naturally in various settings as higher-dimensional analogues of trees. For example, from a metric perspective they can be characterised as the cube complexes whose piecewise-euclidean metric is \emph{CAT(0)}, the metric analogue of a simply connected manifold having nonpositive sectional curvature \cite{gromov:hyperbolic,leary:metric}. Equivalently they are the cube complexes whose piecewise-$\ell^\infty$ metric is \emph{injective} or \emph{hyperconvex}~\cite{miesch:injective}. In graph theory, the class of 1--skeletons of CAT(0) cube complexes is precisely the class of \emph{median graphs} \cite{chepoi:graphs}. Their 0--skeletons are precisely the algebraic objects \emph{discrete median algebras} \cite{roller:poc}. And in computer science they are (together with a base vertex) exactly the domains of \emph{event structures with binary conflict} \cite{barthelemyconstantin:median}.

We equip CAT(0) cube complexes with their piecewise-$\ell^1$ metric, which restricts to the graph metric on the 1--skeleton. It is reasonable to wonder how different these generalised trees are from trees. For instance, one may ask whether they can be isometrically embedded into a finite product of trees. It is not hard to see that this is not always the case. Given any simplicial graph $G$, the \emph{simplex graph} $k(G)$ is the graph with a vertex for each clique of $G$ and an edge whenever two cliques differ by a single vertex. The graph $k(G)$ is median, the dimension of the corresponding CAT(0) cube complex is equal to the clique number of $G$, and $k(G)$ isometrically embeds in a product of $k$ trees if and only if $G$ is $k$--colourable~\cite{bandeltvandevel:embedding}. The question can therefore be refuted by taking any sequence of graphs with bounded clique number and unbounded chromatic number.

The degree of the vertex of $k(G)$ corresponding to the empty clique is $|G|$, so increasing the chromatic number of $G$ also increases the maximal degree of vertices in $k(G)$. This leads to a natural refinement of the above question, which was independently raised by Chepoi, Haglund, Niblo, and Sageev (see \cite{chepoihagen:onembeddings}). When we say that a CAT(0) cube complex has \emph{bounded degree} (or \emph{degree at most $n$}), we mean that its vertices have bounded degree (or degree at most $n$).

\begin{question} \label{q:ie}
Let $Q$ be a CAT(0) cube complex of bounded degree. Does $Q$ isometrically embed into a finite product of trees?
\end{question}

Even this refined question turns out to have a negative answer. Indeed, Chepoi--Hagen gave an example of a 5--dimensional CAT(0) cube complex of degree at most 72 that cannot be isometrically embedded into any finite product of trees \cite{chepoihagen:onembeddings}. They asked whether a lower-dimensional example exists \cite[Qn~4]{chepoihagen:onembeddings}. Complementing this, they proved that if $Q$ is a \mbox{2--dimensional} CAT(0) cube complex (also known as a CAT(0) \emph{square} complex) of bounded degree and no \emph{vertex link} contains a 5--cycle, then $Q$ can be isometrically embedded into a finite product of trees \cite{chepoihagen:corrigendum}. We also remark that every planar CAT(0) cube complex isometrically embeds into a product of five trees \cite{bandeltchepoieppstein:combinatorics}, and all $\delta$--hyperbolic CAT(0) cube complexes of bounded degree isometrically embed into finite products of trees~\cite[Prop.~10.18]{haglund:aspects}.

Our first main result gives the strongest possible negative answer to Question~\ref{q:ie}. Recall that $K_{1,n}$ denotes the star-graph with $n$ leaves. We refer to Definition~\ref{def:outdeg} for the notions of pointing and out-degree.

\begin{mthm} \label{thm:bad_square_complex}
There is a CAT(0) square complex $Q$ of degree at most five that cannot be isometrically embedded into any finite product of trees.

Moreover, $Q$ can be topologically embedded into $K_{1,3}\times\R\subset\R^3$ and can be pointed to have maximum out-degree at most three.
\end{mthm}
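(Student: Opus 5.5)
We will build $Q$ from the Burling graphs, which are triangle-free and have unbounded chromatic number. The key background fact is the standard one: a CAT(0) cube complex isometrically embeds into a product of $k$ trees if and only if its crossing graph (vertices are hyperplanes, edges join crossing hyperplanes) is $k$--colourable. One direction colours hyperplanes by the tree factor they come from. The other takes, for each colour class, the tree dual to that family of pairwise non-crossing hyperplanes. So it suffices to produce a bounded-degree CAT(0) square complex whose crossing graph contains Burling graphs of every chromatic number. Such a graph has unbounded chromatic number, and then no finite product of trees works.

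\textbf{Step 1: realise each Burling graph by a square complex.} Burling graphs $B_k$ are intersection graphs of axis-parallel boxes, and they also have a recursive description via ``probes''. I would realise $B_k$ as the crossing graph (up to induced subgraph) of a CAT(0) square complex $Q_k$. The complex $Q_k$ is assembled from a horizontal line $\R$ (a staircase of squares) together with finitely many half-flats or strips glued along it, branching like $K_{1,3}\times\R$. Each vertex of $B_k$ becomes a hyperplane. Two hyperplanes should cross exactly when the corresponding vertices are adjacent in $B_k$. Non-adjacent pairs are made disjoint by routing them into different branches, or into disjoint intervals along the $\R$ direction.

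The recursion in the definition of $B_k$ (take several copies of $B_{k-1}$ and add, for each ``probe'', a new vertex adjacent to a stable set) would be mirrored geometrically. Place copies of $Q_{k-1}$ side by side along the line direction. Then add a long new hyperplane, running in the star direction, that crosses exactly the prescribed hyperplanes. This is done by inserting a strip of squares.

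\textbf{Step 2: CAT(0), degree, and the auxiliary properties.} To verify that $Q_k$ is CAT(0) I would use Gromov's link condition: every vertex link is a flag simplicial graph with no cycles of length less than $4$, i.e.\ girth at least $4$. Simple connectivity holds because $Q_k$ deformation retracts onto a tree-like core. The degree bound of five is arranged locally: each vertex sees the two line directions plus at most three star directions. This also gives a topological embedding into $K_{1,3}\times\R$. The pointing with out-degree at most three comes from orienting every line-direction edge the same way and every star edge away from the centre of the star, then checking each vertex. Finally, $Q$ is obtained by gluing the $Q_k$ ($k\in\N$) in a bounded-degree way, for instance along rays in disjoint regions of $\R$. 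A disjoint union is not used, since $Q$ must be connected.

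\textbf{Main obstacle.} The hardest step will be Step 1. The recursive construction must keep the degree at most five even as infinitely many hyperplanes accumulate. It must also avoid creating unwanted crossings: each new probe hyperplane has to cross a specified stable set while staying disjoint from the other earlier hyperplanes. I would first attempt to make all probe hyperplanes ``vertical'' (dual to line edges) and all old hyperplanes ``horizontal'' within a strip, so that crossings are forced exactly by interval overlap. This exploits the fact that Burling graphs are intersection graphs of frames, or boxes, in which containment-free overlaps encode adjacency.
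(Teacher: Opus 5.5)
Your proposal has a genuine gap. Step 1 carries all of the content, and it is described only as an intention. None of the three nontrivial properties is established: that the crossing graph contains $B_k$, that the degree is at most five, and that $Q$ embeds into $K_{1,3}\times\R$ with a pointing of out-degree at most three.

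Worse, the local mechanism you give for the degree bound and the embedding is incompatible with the goal. Suppose you mean it literally: edges are labelled ``line'' or ``star'', and each square has one pair of opposite sides of each label, as in the product $K_{1,3}\times\R$. That is what makes your count ``two line directions plus at most three star directions'' and the embedding automatic. But then every hyperplane is dual to edges of a single label, and two hyperplanes can cross only if their labels differ. So the crossing graph is bipartite, $Q_k$ embeds in a product of two trees, and it contains no odd cycle, let alone a Burling graph of large chromatic number. The same objection applies to making probes ``vertical'' and old hyperplanes ``horizontal''. If the count is not meant literally, then neither the degree bound nor the embedding has been argued at all.

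Likewise, ``inserting a strip of squares'' does not address the crux of the degree problem. That crux is making a new hyperplane cross a prescribed stable set spread over several copies of $Q_{k-1}$, while missing every other hyperplane and not creating vertices with many minimal walls. Separately, a pointing is the orientation induced by distance from a base vertex, not a free choice. Along a convex bi-infinite line, edges point away from the gate on both sides, so they cannot all be oriented the same way.

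The paper never builds the complex by hand. It takes a strict collection $\mathcal{R}$ of frames representing $B_k$ and adds auxiliary unbounded frames $\mathcal{D}$ so that every frame has at most one or two outermost frames (\cref{lem:firm}). It then applies Sageev's construction to $(\R^2,\mathcal{R}\cup\mathcal{D})$, which makes CAT(0)-ness and ``crossing graph $=$ intersection graph'' automatic. Every vertex is a principal ultrafilter $\phi_p$. The outermost-frame control shows that at most five frames are minimal for $<_{\phi_p}$, with at most three having $p$ outside. This gives degree at most five, and out-degree at most three from a point outside all frames (\cref{lem:burdeg}).

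The embedding in $K_{1,3}\times\R$ is a separate theorem (\cref{thm:5_in_3-space}) about arbitrary CAT(0) square complexes of degree at most five with such a pointing:
\begin{itemize}
\item the out-degree bound rules out edges lying in four squares;
\item the edges lying in three squares form disjoint paths;
\item cutting along these paths leaves planar pieces, which are reglued through the extra pages.
\end{itemize}
That embedding is far from cellular, since hyperplanes pass between pages. This is exactly the freedom your product-like local model excludes.
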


The fact that the square complex in \cref{thm:bad_square_complex} topologically embeds into $\R^3$ is a special case of a more general phenomenon. 

\begin{mthm} \label{thm:5_in_3-space}
Every CAT(0) square complex $Q$ of degree at most five admits a topological embedding in $K_{1,4}\times\R$.

Furthermore, if $Q$ can be pointed to have maximum out-degree at most three, then $Q$ admits a topological embedding in $K_{1,3}\times\R$.
\end{mthm}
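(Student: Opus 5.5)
The plan is to build the embedding inductively along a height function coming from a pointing, using the $\R$--factor of $K_{1,k}\times\R$ to keep track of position and the $k$ pages of the star to separate the different directions leaving a vertex. I think of $K_{1,k}\times\R$ as a book: $k$ closed half-planes $P_1,\dots,P_k$ glued along a common spine $\{c\}\times\R$.

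\textbf{Step 1: combinatorics of the pointing.} First fix a pointing of $Q$ in the sense of Definition~\ref{def:outdeg}. For Theorem~\ref{thm:5_in_3-space} in general, take the pointing towards a base vertex $v$. For the second statement, take a pointing of maximum out-degree at most three. In the base-vertex case, orient each edge away from $v$. In a median graph any two in-edges at a vertex $x$ span a square, since both neighbours lie in the interval $[v,x]$. As $Q$ is $2$--dimensional, every vertex other than $v$ therefore has in-degree $1$ or $2$. Hence every vertex $x\ne v$ has out-degree at most $4$, and $v$ itself has degree at most $5$. Every square has a unique bottom vertex, a unique top vertex, and two out-edges at its bottom. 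I would also record the local picture. The link of each vertex is a triangle-free graph on at most five vertices, and the squares above $x$ correspond to the edges of the link joining out-directions.

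\textbf{Step 2: inductive book embedding.} Let $Q_n$ be the subcomplex spanned by vertices at height at most $n$, where height is distance to $v$ or the appropriate height for the pointing. I would embed $Q_n$ into the book with the following invariant: every height-$n$ vertex $x$ lies on the spine; each such $x$ has a small open \enquote{free} neighbourhood $U_x$ disjoint from the image of $Q_{n-1}$ and from the other $U_y$; and $U_x$ meets every page in a half-disc. To extend from $Q_n$ to $Q_{n+1}$, consider the out-edges at $x$. Out-edges at distinct vertices are handled in their disjoint free regions. At $x$ there are at most four out-edges, or at most three under the second hypothesis, plus the at most two in-edges already drawn. I put the out-edges of $x$ into distinct pages inside $U_x$, ending at new spine points.

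A new square whose bottom is $x$, with out-edges $e_i\subset P_i$ and $e_j\subset P_j$, is drawn as a disc made of one half in $P_i$ and one half in $P_j$, glued along a spine segment. Its top vertex is a height-$(n+1)$ vertex with two in-edges, and this vertex must be identified as the common endpoint of both out-edges' squares. I would arrange this by routing through a spine segment. This is where the pages are used. In the base case $v$ may use five directions, which is why $K_{1,4}$ rather than $K_{1,3}$ appears. I would treat $v$ by choosing the pointing so that $v$ is not special, or by using one in-direction on the spine itself.

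\textbf{Step 3: the main obstacle.} The hard part is the case of a top vertex $y$ with two in-neighbours $x,x'$ at height $n$. Their free regions $U_x$ and $U_{x'}$ are disjoint, yet the two edges into $y$ and the square below $y$, whose bottom is at height $n-1$, must meet. So the invariant has to be refined. Each square from height $n-1$ should already be drawn as a thin strip from its bottom up to its top along the spine, so that its top corner is reserved in advance. This means the squares at $x$ are drawn at the moment $x$ is reached, not when their tops are reached. Concretely, when processing a bottom vertex $x$, I draw all squares with bottom $x$ at once. There are at most $\binom{4}{2}$ of them, but only those corresponding to link edges, and the link is triangle-free on at most four out-vertices, so it has at most four edges. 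I place these squares as folded discs between pairs of pages around the spine near $x$. For this I need the out-part of the link, a triangle-free graph on at most four (respectively three) vertices, to embed in the link of the spine point of $K_{1,4}$ (respectively $K_{1,3}$). That link is a theta-type graph, namely two points joined by four (respectively three) arcs. Triangle-freeness on at most four vertices forces the out-link to be a subgraph of $C_4$ or a forest, and such graphs embed there. With three out-directions the out-link is a path or smaller, which fits $K_{1,3}$.

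Finally I would check that local embeddings glue to a global injective map. Distinct bottoms use disjoint neighbourhoods along the spine, ordered so that the total order on each height level is compatible with where tops are reserved. This follows from the tree-like (CAT(0)) structure: the region above each vertex is convex, which lets me use nested intervals of the spine.
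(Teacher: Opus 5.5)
Your proposal has a genuine gap: the passage from local pictures to a global embedding is where all the difficulty lies, and your argument for it does not work. The Step~2 invariant (out-edges of $x$ drawn freshly inside a private region $U_x$) is incompatible with squares, as you observe. The Step~3 repair does not give a coherent invariant either. If each square is drawn when its bottom $w$ is processed, then for each side vertex $x$ of that square the out-edge $xy$ and the corner of the square at $x$ are fixed before $x$ is reached. So at $x$ you are not free to place the out-edges in distinct pages. What you actually need is that the \emph{whole} link of $x$ embeds in the link $\Theta_k$ of a spine point, extending what is already drawn; this includes the in-directions and the squares in which $x$ is a side vertex, and nothing in your scheme controls it.

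Moreover, all vertices sit on one spine, and each square is folded through an open spine segment that must then contain no other vertex. This heavily constrains the linear order of vertices on the spine, and your justification for such an order is false. In the quarter-plane $\mathbb{Z}_{\ge0}^2$ pointed at the origin, the upper sets of $(1,0)$ and $(0,1)$ are convex but intersect without being nested. So they cannot be assigned nested or disjoint spine intervals, and global injectivity is unproved.

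The local analysis also misidentifies what forces a fourth page. The link $\Theta_3$ of a spine point of $K_{1,3}\times\R$ has maximum degree three. So the real obstruction is an edge lying in four squares, i.e.\ a link vertex of degree four, and this can occur at any vertex. Five directions at the base vertex are harmless, since a $C_5$ link embeds in $\Theta_3$. For the second statement it is therefore not enough that every out-link has at most three vertices. A vertex $x$ with in-edges $a,a'$ and out-edges $b_1,b_2,b_3$ could a priori have link $K_{1,4}$ centred at $a$, which does not embed in $\Theta_3$. This must be excluded globally, as the paper does. An edge $uv$ in four squares is a cut edge whose four squares lie in distinct components of $Q\ssm uv$ (Lemma~\ref{lem:four_separates}). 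Hence its endpoint $u$ nearer the base point has in-degree at most one, and so out-degree at least four.

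The paper then avoids any height sweep:
\begin{itemize}
\item it cuts $Q$ along edges in four squares;
\item it shows the edges in three squares form disjoint paths, using that hyperplanes are trees;
\item it unglues along these paths to obtain simply connected surfaces, which are therefore planar;
\item it reassembles along the tree of pieces using the extra pages (Lemma~\ref{lem:tree_gluing} and Proposition~\ref{prop:tripod_times_R}).
\end{itemize}
A decomposition of this kind is the missing idea that would replace your unproved gluing step.
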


The degree at most five condition in Theorem~\ref{thm:5_in_3-space} is tight, as can be seen by considering the simplex graph $k(K_{3,3})$: it is a CAT(0) square complex of maximum degree six, and it has a non-planar vertex-link, so cannot be topologically embedded in $\R^3$. However, it may still be true that all CAT(0) cube complexes that can be pointed to have maximum out-degree at most three can be topologically embedded into $\R^3$.

The construction of the 5--dimensional CAT(0) cube complex in \cite{chepoihagen:onembeddings} is based on the family of \emph{Burling graphs}, which are a classical family of triangle-free graphs with large chromatic number \cite{burling:oncolouring}. More precisely, it starts from Burling's geometric representation of Burling graphs as intersection graphs of boxes in $\R^3$, ``lifts'' them to 4--dimensional boxes as in \cite{chepoi:nice}, then ``recubulates'' to obtain the final 5--dimensional example.
Our construction for \cref{thm:bad_square_complex} instead departs from a different geometric representation of Burling graphs as intersection graphs of particularly well behaved \emph{rectangular frames} in the plane \cite{pournajafitrotignon:burling:1}. We add additional frames (for the purposes of bounding degree), then apply \emph{Sageev's construction} (see Item~\ref{sh:sageev}).

The relevance of Burling graphs to embedding a CAT(0) cube complex $Q$ is via the \emph{crossing graph} of $Q$ (Definition~\ref{def:crossing}). The dimension of $Q$ is equal to the clique number of its crossing graph, and $Q$ can be isometrically embedded into a product of $k$ trees if and only if its crossing graph can be $k$--coloured (Lemma~\ref{lem:crossing_colour}).

Despite Burling graphs being a classical family, it has only recently become clear how fundamental they are. They are the only known hereditary class of graphs that is minimal with respect to having unbounded chromatic number, other than complete graphs~\cite{abrishamibrianskidaviesdumasarikovarzazewskiwalczak:burling}. This means that just as how having large cliques is a ``weak'' obstruction to having bounded chromatic number, so is containing Burling graphs. Indeed, recent work has shown that in various graph classes, cliques and Burling graphs are the only two obstructions to having bounded chromatic number~\cite{abrishamibrianskidaviesdumasarikovarzazewskiwalczak:burling}. Such classes are called \emph{Burling-controlled}.

Our next result establishes Burling-control for CAT(0) cube complexes that can be topologically embedded into $\R^3$.  

\begin{mthm}\label{thm:cubeR3}
For each Burling graph $B$ there is an integer $k$ such that the following holds. If $Q$ is a CAT(0) cube complex that topologically embeds into $\mathbb{R}^3$ and whose crossing graph excludes $B$ as an induced subgraph, then $Q$ isometrically embeds into a product of $k$ trees.
\end{mthm}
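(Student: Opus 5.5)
By Lemma~\ref{lem:crossing_colour}, it suffices to show that the crossing graph $G$ of $Q$ has chromatic number bounded by a function of $B$ alone. My plan is to exploit the topological embedding $Q\hookrightarrow\R^3$ to realise (a bounded-index piece of) $G$ inside a known Burling-controlled class: intersection graphs of well-behaved planar objects. Then I would invoke the results of \cite{abrishamibrianskidaviesdumasarikovarzazewskiwalczak:burling}, which say that the chromatic number in such a class is bounded in terms of the clique number and an excluded induced Burling graph.

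\textbf{Step 1: bound the clique number.} A CAT(0) cube complex embedding topologically into $\R^3$ has dimension at most $3$, since an embedded $4$--cube is impossible. Hence $\omega(G)\le 3$. In particular every vertex link is a flag simplicial complex of dimension at most $2$ embedded in $S^2$, so links are planar.

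\textbf{Step 2: reduce to a square complex.} I would handle $3$--cubes by colouring. The hyperplanes meeting a fixed $3$--cube form triangles, and I would split $G$ into boundedly many subgraphs, for instance by a bounded colouring coming from the structure of $3$--dimensional parts, each behaving like the crossing graph of a $2$--dimensional complex. This step is secondary, and I would treat it by orienting/pointing $Q$ from a base vertex.

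\textbf{Step 3: the geometric core.} Fix a base vertex and point $Q$. Each hyperplane $H$ has a carrier, and two hyperplanes cross iff their carriers' midcubes intersect. Using the embedding in $\R^3$ and planarity of links, I would show that the hyperplanes of $Q$ (in a square complex, these are trees) can be drawn as curves or ``frames'' in a plane so that crossing in $Q$ corresponds to intersection of the drawn objects, up to a bounded number of extra colour classes. The intended target is a class such as intersection graphs of curves/strings that are grounded or frame-like, for which Burling control is known from \cite{abrishamibrianskidaviesdumasarikovarzazewskiwalczak:burling}.

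To set this up, I would consider a sweep: order hyperplanes by their distance from the base vertex, separating them via halfspaces. Nestedness (one hyperplane's halfspace containing another's) together with the non-crossing relations gives a laminar structure that can be drawn in the plane. Alternatively, one could use the $\chi$-boundedness machinery of ``excluding a Burling graph'' for classes that are closed under a gluing operation, applying it to the vertex links and then propagating along $Q$ via a Gyárfás-path/BFS-layering argument. Concretely, I would fix a BFS layering of $G$, note that it suffices to colour one layer with neighbourhood-of-previous-layer structure, and use the embedding to show that each such layer has the grounded geometric form.

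\textbf{Main obstacle.} I expect the main obstacle to be Step 3. It requires converting a topological embedding into $\R^3$ into a genuinely planar intersection model of the crossing graph, i.e.\ showing that hyperplanes cannot ``wrap around'' each other in $\R^3$ in ways that create non-planar crossing patterns. The first thing I would try is to use the embedding to show that each hyperplane carrier separates a neighbourhood in $\R^3$, and to induce planar rotation systems consistently.
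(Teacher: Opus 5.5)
Your Step~1 and the overall plan (realise the crossing graph in a Burling-controlled geometric class, then apply \cite{abrishamibrianskidaviesdumasarikovarzazewskiwalczak:burling}) are right. The gap is that Step~3, which carries all of the content, is never carried out, and none of the mechanisms you list would carry it out. Ordering hyperplanes by distance from a base vertex, laminarity of halfspaces, BFS layerings of $G$, and planarity of vertex links do not yield a planar intersection model. Laminarity only describes non-crossing families, and local planarity of links says nothing about how distant parts of hyperplanes sit relative to one another. Step~2 is also unsupported. For general $K_4$--free graphs no bounded partition into triangle-free parts exists (vertex Folkman graphs), so that step would itself need the geometric input you lack. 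It is in any case unnecessary. You also never address that $Q$ may be infinite.

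The missing idea is a single global construction that makes Steps~2 and~3 as you describe them unnecessary.
\begin{itemize}
\item First reduce to the finite case. Given a finite induced subgraph $F$ of $\Cross(Q)$, let $Q'=\hull A$ for a finite vertex set $A$ realising all consistent orientations of the hyperplanes in $F$. Then $Q'$ is a finite convex subcomplex and $F$ is induced in $\Cross(Q')$. At the end, de~Bruijn--Erd\H{o}s \cite{debruijnerdos:colour} passes the colouring back to $Q$.
\item Since $Q'$ is compact and contractible, a small $\eps$--neighbourhood $N$ of its image in $\R^3$ is a $3$--ball, so $\partial N\cong S^2$.
\item Each hyperplane $h$ of $Q'$ gives a closed curve $C_h\subset\partial N$, obtained from a thin thickening of its image. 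Take $\eps$ smaller than half the distance between images of disjoint hyperplanes; then disjoint hyperplanes give disjoint curves. If $h$ and $h'$ cross, then $h\cap h'$ is a tree, and looking near one of its leaves shows $C_h\cap C_{h'}\neq\varnothing$.
\item Hence $\Cross(Q')$ is exactly a string graph. No extra colour classes, grounding, or reduction to dimension two is needed.
\item $\Cross(Q')$ is $K_4$--free by your Step~1. The theorem of \cite{abrishamibrianskidaviesdumasarikovarzazewskiwalczak:burling} that string graphs excluding $K_4$ and $B$ as induced subgraphs are $k$--colourable, together with Lemma~\ref{lem:crossing_colour}, finishes the proof.
\end{itemize}
The obstacle you anticipated, hyperplanes ``wrapping around'' each other in $\R^3$, does not need a separate argument. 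The boundary sphere of the regular neighbourhood records the crossing pattern directly.
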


Combining Theorems~\ref{thm:5_in_3-space} and~\ref{thm:cubeR3} gives the following corollary whose assumptions are more intrinsic to the cube complex.

\begin{mcor} \label{thm:5}
For each Burling graph $B$ there is an integer $k$ such that the following holds. If $Q$ is a CAT(0) square complex with degree at most five and whose crossing graph excludes $B$ as an induced subgraph, then $Q$ isometrically embeds into a product of $k$ trees.
\end{mcor}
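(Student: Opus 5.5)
The statement is the immediate combination of \cref{thm:5_in_3-space} and \cref{thm:cubeR3}, so the plan is to check that the hypotheses of the latter are met and that $k$ depends only on $B$.

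Fix a Burling graph $B$ and let $k=k(B)$ be the integer provided by \cref{thm:cubeR3}. This choice is made before $Q$ is given, so $k$ depends on $B$ alone, as the corollary requires.

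Now let $Q$ be a CAT(0) square complex of degree at most five whose crossing graph excludes $B$ as an induced subgraph. By the first part of \cref{thm:5_in_3-space}, $Q$ admits a topological embedding into $K_{1,4}\times\R$. The star $K_{1,4}$ is a finite graph that embeds topologically in $\R^2$: place the centre at the origin and the four leaves along four rays. Taking the product with the identity on $\R$ gives a topological embedding $K_{1,4}\times\R\hookrightarrow\R^2\times\R=\R^3$. Composing the two embeddings, $Q$ topologically embeds into $\R^3$.

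A CAT(0) square complex is in particular a CAT(0) cube complex, and the crossing-graph hypothesis is unchanged. So $Q$ satisfies all hypotheses of \cref{thm:cubeR3}, and hence isometrically embeds into a product of $k$ trees.

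No step is a real obstacle. The only points to check are that the composition of topological embeddings is again an embedding, which holds since both maps are injective and continuous and images are handled consistently on the (closed, locally finite) complex. One also needs that the notion of "topologically embeds into $\R^3$'' used in \cref{thm:cubeR3} is the plain topological one, which matches what \cref{thm:5_in_3-space} provides. All the substantive content lies in the two cited theorems.
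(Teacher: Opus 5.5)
Your proposal is correct and is exactly the paper's argument: the corollary follows by combining Theorem~\ref{thm:5_in_3-space} (an embedding into $K_{1,4}\times\R\subset\R^3$) with Theorem~\ref{thm:cubeR3}, where $k$ is chosen depending only on $B$. One cosmetic remark: a composition of topological embeddings is always a topological embedding, so that step needs no appeal to injectivity, continuity, or local finiteness.
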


We believe that it is possible to drop the 2--dimensionality assumption in \cref{thm:5}, by decomposing $Q$ along separating faces into parts that are either cubes or $3$--dimensional CAT(0) cube complexes that admit topological embeddings into $\mathbb{R}^3$, but we wish to avoid the additional technical details.

These results are suggestive that perhaps Burling graphs are the only reason that Question~\ref{q:ie} has a negative answer.
Using an entirely different construction to before, we show that this is not the case: there are other sources of failure for Question~\ref{q:ie}. The following is a weakened version of \cref{thm:6}.

\begin{mthm} \label{thm:bad_square_complex2}
There is a CAT(0) square complex $Q$ of degree at most six such that the crossing graph of $Q$ excludes some Burling graph as an induced subgraph but $Q$ cannot be isometrically embedded into any finite product of trees.
\end{mthm}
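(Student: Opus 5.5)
Since $Q$ isometrically embeds into a product of $k$ trees exactly when its crossing graph is $k$--colourable (Lemma~\ref{lem:crossing_colour}), I need a sequence of CAT(0) square complexes of degree at most six whose crossing graphs are triangle-free, have unbounded chromatic number, and exclude a fixed Burling graph. Taking a disjoint union glued along vertices, or a single complex containing all of them, then gives one $Q$. The abstract indicates the source: graphs that are intersections of a chordal graph and an interval graph, with arbitrarily large girth and chromatic number. Large girth (at least five, say) forces the graph to be triangle-free. It also excludes Burling graphs, since every Burling graph beyond the first few contains short cycles (a $4$--cycle appears early in Burling's construction). So some fixed Burling graph $B$, one containing a cycle of length at most $4$, is excluded.

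The plan has three steps.

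First, construct the graphs $G_n$ as intersections $E(G_n)=E(C_n)\cap E(I_n)$, with $C_n$ chordal and $I_n$ an interval graph, and with girth and chromatic number both at least $n$. I would build these inductively, in the style of Tutte/Zykov or Erdős-type shift-graph constructions. At each step, stack many copies along the interval line so that every independent colouring class is forced to create a monochromatic edge. The chordal side is realised as subtrees of a tree, which lets one attach new branches avoiding short cycles.

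Second, realise $G_n$ as a crossing graph. The chordal graph is an intersection graph of subtrees of a tree $T$, and the interval graph is an intersection graph of intervals of $\R$. I would represent each vertex $v$ as a "product" region $S_v\times J_v\subset T\times\R$ and use Sageev's construction (Item~\ref{sh:sageev}) on a suitable family of walls in $T\times\R$. Two hyperplanes should cross exactly when both projections overlap, so the crossing graph is $C_n\cap I_n$ restricted appropriately. Triangle-freeness gives dimension two.

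Third, control degree. As in the first construction, I would add auxiliary walls subdividing long hyperplanes, so that each vertex of the cubulation has few incident hyperplanes. The added walls must not create triangles or short cycles in the crossing graph, and must not destroy the large chromatic number, which is automatic since it is preserved under taking supergraphs restricted to the original vertices. Getting the degree down to six is where I expect the main obstacle. Local pictures in $T\times\R$ with $T$ of branching degree $3$ give links like $K_{1,3}\times$ line, of degree about $5$ or $6$. I would try to arrange that the tree $T$ is cubic and that interval endpoints are generic, so that no two walls end at the same vertex, and then verify by a local case check that the vertex links have at most six vertices. I would also need to verify that the auxiliary walls keep the girth large enough to still exclude $B$.
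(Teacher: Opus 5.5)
\textbf{Gap 1: girth will not survive the auxiliary walls.} Your architecture matches the paper's: thick limbs $P\times I$ in $T\times\R$ over a subcubic rooted tree, Sageev's construction, auxiliary walls for degree, and gluing the pieces along a ray. But the check you defer to the end is where the plan breaks. You should not expect the auxiliary walls to preserve girth. The paper's degree-controlling walls include $T_v\times\R$ for every non-root vertex $v$. Such a wall crosses every thick limb $P_x\times I_x$ whose limb contains the edge $e_v$ from $v$ to its parent. So two limbs sharing two edges $e_u,e_v$ already give the $4$--cycle $P_x\times I_x,\ T_u\times\R,\ P_{x'}\times I_{x'},\ T_v\times\R$, and in a Tutte-type construction many limbs run down the same branches.

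The missing idea is to give up on the girth of the final crossing graph and require only that the auxiliary walls have bipartite overlap graph. This holds in the paper: the walls $T_v\times\R$ are pairwise nested or disjoint, and so are the interval-type walls. If the original graph has girth greater than $n$, any $n$ of its vertices induce a forest. Hence every $n$--vertex subgraph of the full crossing graph is $2\cdot2=4$--colourable. Taking $n$ larger than the size of some $5$--chromatic Burling graph excludes that graph. So the excluded Burling graph must be chosen by chromatic number, not merely by containing a short cycle.

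\textbf{Gap 2: degree control and the rest of the construction.} A cubic tree and generic endpoints do not bound the degree. Take non-overlapping intervals, and let $I_1$ be the smallest interval of a wall containing $(v,z)$. Then every wall $F\times I$ with $v\in F$ whose interval is maximal among such intervals properly inside $I_1$ is a minimal element of $<_{\phi_{(v,z)}}$. Genericity does nothing to bound how many of these there are. The paper adds walls $T_y\times I_y$ that force a binary hierarchy at each tree vertex: among any three disjoint intervals at $v$, some wall at $v$ contains exactly two. It must then show that no three walls pairwise cross. That argument uses the specific restricted structure: limbs run from a leaf to an ancestor, intersecting limbs graze, and if $I_x\subset I_y$ then $s(P_x)$ is not a descendant of $s(P_y)$.

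Relatedly, in Sageev's construction two product regions cross when they overlap, not when they intersect, and not when both projections overlap. So your step 2 only yields $C_n\cap I_n$ if intersecting regions are never nested. This property, together with large girth, has to be built into the geometric construction in step 1 rather than read off from an abstract chordal-and-interval graph. Large girth needs a sparse Ramsey input, such as the Pr\"omel--Voigt sparse Hales--Jewett theorem the paper uses.

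Finally, gluing the pieces to a ray while keeping degree at most six requires a base vertex of degree at most four in each piece. Points over leaves of $T$ provide such vertices.
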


Note that degree at most six is optimal, by \cref{thm:5}. Moreover, by \cref{thm:cubeR3} the CAT(0) square complex $Q$ in \cref{thm:bad_square_complex2} does not topologically embed into $\R^3$. 
Of course, there are likely more direct ways to show that $Q$ does not topologically embed into $\R^3$, for instance by considering the forbidden minors for topologically embedding simply connected 2--dimensional simplicial complexes into $\R^3$ \cite{carmesin:embedding}.

Along the way to proving Theorem~\ref{thm:bad_square_complex2} we also happen to prove a result of independent graph-theoretic interest. 
An \emph{interval graph} is the intersection graph of a collection of intervals in $\R$. More generally, a \emph{chordal graph} is the intersection graph of a collection of subtrees of a tree. The chromatic number of a chordal graph is equal to its clique number.
The \emph{intersection} of two graphs with a common vertex set is the graph with the same vertex set and edge set being the common edges of both graphs.
Gy{\'a}rf{\'a}s \cite{gyarfas:problems} asked whether cliques are the only obstruction to intersections of chordal and interval graphs having bounded chromatic number.
This was answered in the negative by the results of~\cite{felsnerjoretmicektrotterwiechert:burling} (as observed in \cite{chaniotismiraftabspirkl:graphs}) by using Burling graphs (see also \cite{chaniotiskoertsspirkl:intersections}).
We show that intersection graphs of chordal and interval graphs are not even Burling-controlled.
More strongly, we show that such graphs can have arbitrarily large girth and chromatic number (the \emph{girth} of a graph is the length of its shortest cycle). See Theorem~\ref{thm:treerestricted}.

\begin{mthm}\label{thm:chordalinterval}
There are intersections of chordal and interval graphs that have arbitrarily large girth and chromatic number.
\end{mthm}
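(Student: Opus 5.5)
We will produce, for every $g$ and $k$, a tree $T$, a family of subtrees $\{S_v\}$ of $T$, and a family of intervals $\{I_v\}$ in $\R$, indexed by the same set $V$. The graph $G$ on $V$ has an edge $uv$ exactly when $S_u\cap S_v\neq\emptyset$ and $I_u\cap I_v\neq\emptyset$, and we want $G$ to have girth at least $g$ and chromatic number greater than $k$. The plan is to adapt a classical high-girth, high-chromatic construction, Tutte/Zykov/Nešetřil--Rödl style amalgamation, so that every gluing step can be realised by subtrees and intervals. The guiding observation is that intersecting with an interval graph gives a ``time'' coordinate, and intersecting with a chordal graph gives a ``branching'' coordinate. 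Vertices placed on different branches of $T$ never become adjacent, however their intervals overlap.

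The induction produces a stronger, \emph{tree-restricted} statement (presumably the form of Theorem~\ref{thm:treerestricted}): a representation of a graph $G_k$ with girth $\ge g$ and $\chi(G_k)>k$, in which all subtrees contain a common root vertex $r$ and all intervals lie inside $[0,1]$.

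The inductive step will follow Tutte's construction.
\begin{enumerate}
\item Take a large set $X$ of new vertices.
\item Take an auxiliary hypergraph $\mathcal H$ on $X$ that is $|V(G_k)|$--uniform, has girth $\ge g$, and has chromatic number $>k$ (Erdős--Hajnal/Lovász).
\item For each hyperedge $e$, attach a fresh copy $G_k^{e}$ of $G_k$ and match its vertices bijectively to the elements of $e$.
\end{enumerate}
Any $k$--colouring of the $X$--vertices makes some hyperedge $e$ monochromatic. Then $G_k^e$ must use a $(k+1)$-st colour, so $\chi>k+1$. The girth bound follows from the girth of $\mathcal H$ and the girth of $G_k$.

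To realise this geometrically:
\begin{itemize}
\item Give each copy $G_k^e$ its own new branch of $T$, hanging off a new vertex $t_e$, and shift its intervals into a private time slot $[a_e,a_e+1]$.
\item Give each new vertex $x\in X$ the subtree consisting of the branch points $t_e$ for $e\ni x$, joined through a new central spine, together with an interval.
\end{itemize}
The requirements are that $x$ meets exactly its matched vertex in each $G_k^e$, and that distinct $x,x'\in X$ are nonadjacent.

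The main obstacle is making $x$ adjacent to exactly one vertex of each copy while keeping $X$ independent. One interval per $x$ cannot hit a prescribed single vertex in each of many copies' time slots without hitting others. So I would first try to strengthen the induction hypothesis: every vertex $v$ of $G_k$ owns a \emph{private} point, a leaf $\ell_v$ of $T$ lying only in $S_v$ (or a private time instant in $I_v$). Then $x$ can reach its partner by a thin path in $T$ ending at $\ell_v$, while the branching structure forbids other contacts.

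Independence of $X$ is handled by the interval coordinate. I would order $X$ and give each $x$ a pairwise disjoint interval $J_x$. The copies $G_k^e$ are then placed in time so that each copy's intervals, restricted to the relevant private points, stretch across the slots $J_x$ for $x\in e$. Concretely, the private points should lie in distinct time windows, which forces the base representation to have this ``spread-out private points'' property. Maintaining that property through the induction is the delicate bookkeeping I expect to dominate the proof, and it must be verified that no new short cycles arise through the spine of $T$. Since spine edges only create adjacencies among $X$, and those are killed by disjoint $J_x$, this should hold.
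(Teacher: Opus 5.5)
Your Tutte-style plan can be made to work. However, the step you flag as the main obstacle is never resolved, and the invariants you propose for it are wrong.

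\textbf{What fails as written.}
\begin{itemize}
\item \emph{Common root.} The hypothesis that all subtrees contain a common root $r$ is impossible. Any two subtrees would then meet, so $G_k$ would be the interval graph of the $I_v$. Interval graphs are perfect, so girth at least $4$ would force $\chi(G_k)\le 2$.
\item \emph{Tree-private leaf.} A leaf $\ell_v$ lying only in $S_v$ does not give single adjacency. All $x\in e$ enter the branch of $G_k^e$ through the same vertex $t_e$. For connected $G_k$, $\bigcup_u S_u$ is a subtree, so all their paths first meet it at one common vertex. Every $x\in e$ therefore meets every $S_u$ through that vertex in the tree coordinate. So ``the branching structure forbids other contacts'' is false, and separation inside a copy must come from the time coordinate.
\item \emph{Globally private time instant.} This is not inherited. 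A copy stretched to reach the scattered slots $J_x$, $x\in e$, covers the slots $J_{x'}$ of many $x'\notin e$.
\item \emph{Private slots $[a_e,a_e+1]$.} Your first placement is incompatible with disjoint $J_x$. If $J_x$ meets three slots of hyperedges containing $x$, it contains the middle slot, and then no other $J_{x'}$ can reach that slot. So $\mathcal H$ would have maximum degree two.
\item \emph{Colouring step.} You need $\chi(\mathcal H)>k+1$, not $>k$.
\end{itemize}

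\textbf{A working invariant.} The missing ingredient is an invariant that mixes both coordinates. One that works: a vertex $p\in T$, and for each $v$ a vertex $\ell_v\in S_v$ and a point $z_v$ in the interior of $I_v$ that lies in no $I_u$ with $u\ne v$ and $S_u$ meeting the path from $p$ to $\ell_v$. The construction is then:
\begin{itemize}
\item Attach each copy to a hub $c$ by an edge $c\,p_e$.
\item Let $S_x$ be the union, over $e\ni x$, of the paths from $c$ to $\ell^e_{v_e(x)}$.
\item Choose the bijection $e\to V(G_k)$ so that the order of the $z_v$ matches the order of the $J_x$.
\item Reparametrise the copy by an increasing homeomorphism $\phi_e$ carrying a small neighbourhood of $z_v$ onto $J_{x(v)}$.
\end{itemize}
Then $x\in e$ is adjacent to exactly $v$ in $G_k^e$. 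The invariant passes to the new graph with $p^*=c$, $\ell_x=c$, $z_x\in J_x$, and $z^*_u$ a point of $\phi_e(I_u)$ just outside $J_{x(u)}$, in a gap between the $J$'s. For this, arrange $\phi_e$ so that a slightly larger neighbourhood of $z_u$, still avoiding the relevant $I_{u'}$, is mapped only a little beyond $J_{x(u)}$.

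Completed this way, your argument is a genuinely different, Hales--Jewett-free proof of this statement.

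\textbf{Comparison with the paper.} The paper swaps the roles of the two coordinates. Each new vertex is a single leaf $\ell^*_w$ of the iterated tree $T^*$ with a huge interval containing $[0,1]$, so independence comes from the tree. Each copy sits in a private slot $I_C\subseteq[0,1]$, and single adjacency comes from extending each limb $P_\ell$ of the copy down to the leaf $\ell^*_{C(\ell)}$. Your private-leaf idea is exactly what the paper maintains, and it suffices there because the new vertex occupies only that leaf. This requires hyperedges compatible with the tree. Combinatorial lines provide that: all points of $C$ share the prefix $w_C$ and then branch according to $\ell$ at the first active coordinate. That is why the paper uses Pr\"omel--Voigt rather than an arbitrary Erd\H{o}s--Hajnal/Lov\'asz hypergraph. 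In exchange, the paper gets a subcubic tree and restricted thick limbs, which the degree bound in Lemma~\ref{lem:treedeg} relies on. Your spiders $S_x$ and high-degree hub would not give that.
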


Our construction for Theorem~\ref{thm:chordalinterval} is a modification of Tutte's construction of triangle-free graphs of large chromatic number \cite{descartes:three,descartes:solution}, making use of a sparse version of the Hales--Jewett theorem \cite{halesjewett:regularity} due to Prömel and Voigt \cite{promelvoigt:sparse:grahamrothschild}.
Such modifications of Tutte's construction using (sparse versions \cite{promelvoigt:sparse:grahamrothschild,promelvoigt:sparse:galaiwitt,rodl:onramsey} of) Ramsey results such as Gallai's theorem \cite{rado:note} and the Hales--Jewett theorem \cite{halesjewett:regularity} were introduced by the first author \cite{davies:box} and have since found further uses in \cite{davieskellerkleistsmorodinskywalczak:solution,davieshatzelyepremyan:counterexample,davieshatzelyepremyan:minimal}.

Finally, let us quickly mention a connection to theoretical computer science. In \cite{chepoi:nice}, Chepoi disproved the \emph{nice labelling conjecture} of Rozoy and Thiagarajan \cite{rozoythiagarajan:event}, which stated that ``every event structure with finite degree admits a nice labelling with a finite number of labels''. His construction has degree five and uses the standard correspondence between event structures and CAT(0) cube complexes \cite{barthelemyconstantin:median,chepoi:graphs,roller:poc}. By contrast, it is known that the conjecture holds for event structures of degree two \cite{assousbouchittecharrettonrozoy:finite} and in various cases of degree three \cite{santocanale:nice}.

The existence of a pointed CAT(0) cube complex with maximum out-degree at most $n$ that cannot be isometrically embedded into any finite product of trees in particular implies the existence of an event structure with degree at most $n$ that admits no nice labelling with finitely many labels (see \cite[\S3.5]{chepoihagen:onembeddings} for further details). The following is therefore a corollary of \cref{thm:bad_square_complex}.

\begin{mcor} \label{thm:nice}
There is an event structure of degree three that has no nice labelling with finitely many labels.
\end{mcor}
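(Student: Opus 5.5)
The plan is to deduce Corollary~\ref{thm:nice} directly from \cref{thm:bad_square_complex}, via the standard dictionary between pointed CAT(0) cube complexes and event structures recalled just above (\cite{barthelemyconstantin:median}, \cite[\S3.5]{chepoihagen:onembeddings}). Take the square complex $Q$ of \cref{thm:bad_square_complex}, and fix a base vertex $v$ that witnesses the pointing with maximum out-degree at most three. By Definition~\ref{def:outdeg}, the out-degree of a vertex $x$ is the number of edges from $x$ to vertices farther from $v$.

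\emph{Building the event structure.} The events of $E$ are the hyperplanes of $Q$. Causality is defined by $h\le h'$ if and only if $h$ separates $v$ from $h'$. Two hyperplanes are in conflict exactly when they neither cross nor are nested in this way relative to $v$; conflict is then binary and hereditary. By the Barth\'elemy--Constantin correspondence, $Q$ with base vertex $v$ is the domain of configurations of $E$. Moreover, the degree of $E$ is the maximal number of events that can be enabled at a single configuration. Enabled events at the configuration $x$ are exactly the hyperplanes dual to edges leaving $x$ away from $v$, so the degree of $E$ equals the maximal out-degree of $Q$ pointed at $v$, which is at most three.

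\emph{Nice labellings and embeddings.} A nice labelling with $k$ labels is a map from events to $\{1,\dots,k\}$ that separates any two events which are concurrent or in minimal conflict. I would then use \cite[\S3.5]{chepoihagen:onembeddings}: from a finite nice labelling one obtains a finite colouring of the crossing graph of $Q$. Indeed, crossing hyperplanes are concurrent, so they already receive distinct labels. Lemma~\ref{lem:crossing_colour} then gives an isometric embedding of $Q$ into a product of finitely many trees, which \cref{thm:bad_square_complex} forbids. Hence no finite nice labelling exists.

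\emph{Main obstacle.} The step needing the most care is matching the conventions of degree and out-degree exactly, in particular whether "degree" in the event-structure sense counts immediate successors of configurations. I would check this against Definition~\ref{def:outdeg} and the cited section of \cite{chepoihagen:onembeddings}. The contrapositive implication itself is routine once the correspondence is granted.
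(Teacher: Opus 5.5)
Your proposal is correct and matches the paper's argument: the corollary is deduced directly from \cref{thm:bad_square_complex} via the correspondence of \cite[\S3.5]{chepoihagen:onembeddings}, with the degree of the event structure equal to the maximal out-degree of the pointed complex. Your step that a finite nice labelling restricts to a finite colouring of $\Cross(Q)$ (crossing hyperplanes are concurrent events), combined with \cref{lem:crossing_colour}, is exactly the easy direction of that correspondence which the paper cites.
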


A simple median argument that can be found in \cite[\S7.3]{chepoihagen:onembeddings} shows that if $Q$ is a $n$--dimensional CAT(0) cube complex that can be pointed to have maximal out-degree $d$, then $Q$ has maximal degree at most $d+n$. It therefore follows from Corollary~\ref{thm:5} that Burling graphs are the only obstruction to nice labelling for event structures of degree three whose associated CAT(0) cube complexes are 2--dimensional. It is possible that this holds more generally, which would be implied by the following.

\begin{conjecture*} 
For each Burling graph $B$ there is an integer $k$ such that the following holds. If a CAT(0) cube complex $Q$ whose crossing graph excludes $B$ as an induced subgraph can be pointed to have maximum out-degree three, then $Q$ isometrically embeds into a product of $k$ trees.
\end{conjecture*}

We remark that  the CAT(0) square complexes constructed in \cref{thm:bad_square_complex2} can be pointed so that they have maximum out-degree at most four, so the degree bound in the above conjecture cannot be loosened. 

In view of \cite{chepoihagen:onembeddings} and Theorems~\ref{thm:bad_square_complex} and~\ref{thm:bad_square_complex2}, it seems natural to ask the following, in which isometric embeddings are replaced by the more flexible quasiisometric embeddings (a map is a \emph{quasiisometric embedding} if it preserves distances up to affine error).

\begin{question} \label{q:qie}
Is there a CAT(0) cube complex of bounded degree that cannot be quasiisometrically embedded into any finite product of trees?
\end{question}

We remark that a straightforward modification of the construction of Theorem~\ref{thm:bad_square_complex} with barycentric subdivisions should give an example of a CAT(0) square complex of maximum degree five that does not admit any quasimedian quasiisometric embedding into any finite product of trees (a map is \emph{quasimedian} if it preserves medians up to bounded distance). There are also $n$--dimensional CAT(0) cube complexes of bounded degree (certain \emph{right-angled Artin groups}) that cannot be quasiisometrically embedded into any product of $n$ trees \cite{baderbensaidpetyt:quasiisometric:rigidity}. But we do not know of an example of a finite-dimensional CAT(0) cube complex, even of unbounded degree, that cannot be quasiisometrically embedded into any finite product of trees.

\medskip

\cref{sec:pre} contains preliminaries on CAT(0) cube complexes. After this, the remaining sections can essentially be read independently of one another.
In \cref{sec:frames} we use rectangular frames to prove Theorem~\ref{thm:burbox}, which is the main part of \cref{thm:bad_square_complex}. A more explicit description of the collections of frames involved is given in Appendix~\ref{sec:frames:1}. \cref{sec:R3} is dedicated to proving \cref{thm:5_in_3-space}, which together with Theorem~\ref{thm:burbox} establishes \cref{thm:bad_square_complex}. The short \cref{sec:string} proves \cref{thm:cubeR3}, using a result from \cite{abrishamibrianskidaviesdumasarikovarzazewskiwalczak:burling} that \emph{string graphs} are Burling-controlled.
Finally, Theorems~\ref{thm:bad_square_complex2} and~\ref{thm:chordalinterval} are proved in \cref{sec:noBur}.






\subsection*{Acknowledgements}

We are grateful to Victor Chepoi and Mark Hagen for helpful conversations about their work \cite{chepoihagen:onembeddings}. We thank the Isaac Newton Institute for their hospitality during the programme \emph{Operators, Graphs, Groups}, where this project started (EPSRC grant EP/Z000580/1). 
Research of the first author was supported by the Alexander von Humboldt Foundation in the framework of the Alexander von Humboldt Professorship of Daniel Král' endowed by the Federal Ministry of Education and Research.

There was no use of AI in the production of this paper.

\section{CAT(0) cube complexes}\label{sec:pre}

We refer the reader to the books \cite{wise:structure,bowditch:median:book,genevois:algebraic} for background on CAT(0) cube complexes. We provide some brief basics that are relevant to us.

A \emph{cube} is a copy of $[0,1]^n$ for some $n$. A \emph{cube complex} is a space built by isometrically gluing cubes along faces. The \emph{link} of a vertex $v$ in a cube complex $X$ is a cell complex that can be thought of as the ``$\eps$--sphere'' around $v$: it has a vertex for each end of an edge incident to $v$, and a collection of $n+1$ vertices in the link span an $n$--simplex if and only if there is an $(n+1)$--cube containing those edges. The cube complex is \emph{CAT(0)} if every vertex link is a flag simplicial complex.

Equivalently, a cube complex is \emph{CAT(0)} if it is contractible and its 1--skeleton is a \emph{median graph}: for any three vertices $x_1,x_2,x_3$, there is a unique vertex $\mu=\mu(x_1,x_2,x_3)$ such that $\dist(x_i,x_j)=\dist(x_i,m)+\dist(m,x_j)$ whenever $i\ne j$.

A \emph{midcube} in a CAT(0) cube complex is a subspace of a cube obtained by restricting one coordinate to the value $\frac12$. A \emph{hyperplane} is a connected, nonempty subset $h$ that intersects each cube in either the empty set or a midcube. If $e=xy$ is an edge of $Q$, then for each $z\in Q$ we have $\mu(x,y,z)\in\{x,y\}$. If $e$ intersects $h$, then an edge $x'y'$ intersects $h$ if and only if $\mu(x,y,x')\ne\mu(x,y,y')$.

\begin{definition}[Crossing graph] \label{def:crossing}
Let $Q$ be a CAT(0) cube complex. The \emph{crossing graph} $\Cross(Q)$ of $Q$ is the graph with a vertex for each hyperplane of $Q$ and an edge joining two vertices whenever the corresponding hyperplanes intersect. 
\end{definition}

\bsh{Sageev's construction} \label{sh:sageev}
Let $S$ be a set and let $P$ be a collection of bipartitions of $S$, called \emph{walls}. That is, each $h\in P$ is a partition $S=h^-\sqcup h^+$ into two \emph{halfspaces}. We call the pair $(S,P)$ a \emph{wallspace} if for each $s,t\in S$ there are only finitely many walls $h\in P$ for which $s$ and $t$ do not lie in the same halfspace.

An \emph{ultrafilter} $\phi$ is a choice of halfspace $\phi(h)$ for each $h\in P$ such that $\phi(h_1)\cap\phi(h_2)\ne\varnothing$ for all $h_1,h_2\in P$. We can define an extended metric on the set of ultrafilters by setting
\[
\dist(\phi,\psi) \,=\, |\{h\in P\,:\, \phi(h)\ne\psi(h)\}|.
\]
Each $s\in S$ determines a \emph{principal} ultrafilter $\phi_s$, where $\phi_s(h)$ is the halfspace containing $s$. 

The CAT(0) cube complex \emph{dual} to a wallspace $(S,P)$ has vertex set
\[
Q \,=\, \{\text{ultrafilters }\phi\,:\,\dist(\phi,\phi_s)<\infty \text{ for some }s\in S\},
\]
and an edge between vertices whenever they are at distance 1.
\esh

Let $(S,P)$ be a wallspace. We say that two walls $h_1,h_2\in P$ \emph{cross} if all four halfspaces $h_1^\pm\cap h_2^\pm$ are nonempty. The dimension of the CAT(0) cube complex dual to $(S,P)$ is equal to the supremal cardinality of a collection of pairwise crossing walls. 

Given a vertex $\phi$ in the CAT(0) cube complex $Q$ dual to $(S,P)$, we can define a partial order $<_\phi$ on $P$ by declaring $h_1<_\phi h_2$ whenever $\phi(h_1)\subset\phi(h_2)$.

\begin{lemma} \label{lem:degree}
The degree of the vertex $\phi$ in the CAT(0) cube complex $Q$ is equal to the number of minimal elements of $(P,<_\phi)$.
\end{lemma}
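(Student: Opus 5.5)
The plan is to set up an explicit bijection between the edges of $Q$ at $\phi$ and the minimal elements of $(P,<_\phi)$. A neighbour of $\phi$ is an ultrafilter $\psi\in Q$ with $\dist(\phi,\psi)=1$, so $\psi$ differs from $\phi$ on exactly one wall $h$. Write $\phi^h$ for the choice function that agrees with $\phi$ except that $\phi^h(h)$ is the complementary halfspace $\phi(h)^c$. Since $\dist(\phi^h,\phi)=1<\infty$, $\phi^h$ lies in $Q$ as soon as it is an ultrafilter. So the degree of $\phi$ equals the number of walls $h$ for which $\phi^h$ is an ultrafilter, and it remains to show that this happens exactly when $h$ is $<_\phi$--minimal.

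First, suppose $\phi^h$ is not an ultrafilter. Since $\phi$ is an ultrafilter, the only pairs that can fail are those involving $h$. So there is some $k\ne h$ with $\phi(h)^c\cap\phi(k)=\varnothing$, that is, $\phi(k)\subseteq\phi(h)$. Equality is impossible: if $\phi(k)=\phi(h)$ then $k$ and $h$ would be the same bipartition, hence the same wall. (I will treat $P$ as a set of bipartitions, so distinct walls give distinct bipartitions; if $P$ were allowed multiplicities, this is the point to flag.) Hence $\phi(k)\subsetneq\phi(h)$, i.e.\ $k<_\phi h$, and $h$ is not minimal.

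Conversely, suppose $k<_\phi h$, so $\phi(k)\subset\phi(h)$. Then $\phi(k)\cap\phi^h(h)=\phi(k)\cap\phi(h)^c=\varnothing$, and $\phi^h(k)=\phi(k)$ because $k\neq h$. So $\phi^h$ violates the ultrafilter condition. Combining the two directions, $\phi^h$ is an ultrafilter exactly when $h$ is $<_\phi$--minimal, and the map $h\mapsto\phi^h$ is injective because different $h$ give different flipped coordinates. This proves the lemma.

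The only real subtlety is the step excluding $\phi(k)=\phi(h)$ for distinct walls. This is where the convention that walls are genuine distinct bipartitions is used, so that $<_\phi$ is a strict order in which such ties cannot occur. Everything else is routine unpacking of the definitions.
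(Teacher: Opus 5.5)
Your proof is correct and follows the same route as the paper: neighbours of $\phi$ are exactly the single-wall flips $\phi^h$, and such a flip is an ultrafilter precisely when $h$ is $<_\phi$--minimal, with $h\mapsto\phi^h$ injective. You also write out the direction the paper leaves as ``straightforward to check'', and you rightly note that distinct walls are distinct bipartitions, so $\phi(k)=\phi(h)$ forces $k=h$; this is what lets $<_\phi$ be read strictly, as the paper implicitly does.
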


\begin{proof}
If $\psi$ is adjacent to $\phi$, then there is a unique wall $h$ such that $\psi(h)\ne\phi(h)$. If there were $h'\in P$ with $h'<_\phi h$, then we would have $\psi(h')\cap\psi(h)=\varnothing$, which is impossible because $\psi$ is an ultrafilter. Hence $h$ is a minimal element of $(P,<_\phi)$. 

Conversely, given a minimal element $h$ of $(P,<_\phi)$, let $\psi$ be defined by $\psi(h')=\phi(h')$ for all $h'\in P\ssm\{h\}$ and $\psi(h)\ne\phi(h)$. It is straightforward to check that $\psi$ is an ultrafilter, and it is adjacent to $\phi$ in $Q$.
\end{proof}

The following well-known lemma is a straightforward consequence of Sageev's construction; the below statement first appears in \cite[Lem.~1.48]{holloway:embeddings}, but similar statements can be found in \cite{dranishnikovjanuszkiewicz:every,bandeltchepoieppstein:ramified,button:groups,baderbensaidpetyt:quasiisometric:rigidity,genevois:median}. A more general statement about \emph{metric median algebras} is given in \cite{bowditch:embedding}. 

\begin{lemma} \label{lem:crossing_colour}
Let $Q$ be a CAT(0) cube complex. The crossing graph $\Cross(Q)$ is $k$--colourable if and only if $Q$ can be isometrically embedded into a product of $k$ trees.
\end{lemma}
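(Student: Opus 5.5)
Both directions go through the dual description of $Q$ via its own hyperplanes. Let $S$ be the vertex set of $Q$ and $P$ its set of hyperplanes, each giving a bipartition of $S$ into two combinatorial halfspaces. By Sageev's construction (Item~\ref{sh:sageev}), $Q$ is canonically isomorphic to the cube complex dual to $(S,P)$. The $\ell^1$ distance between two vertices equals the number of hyperplanes separating them. Since an isometric embedding of the whole complex is determined by its restriction to the $1$--skeleton (cubes go to cubes), it suffices to work with vertex sets and graph metrics throughout.

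\emph{Colouring gives an embedding.} Suppose $c\colon P\to\{1,\dots,k\}$ is a proper colouring of $\Cross(Q)$. For each $i$, the walls $P_i=c^{-1}(i)$ pairwise do not cross. Let $T_i$ be the cube complex dual to $(S,P_i)$. Because its walls are pairwise non-crossing, its dimension is at most one by the dimension statement after Item~\ref{sh:sageev}, so $T_i$ is a CAT(0) graph, that is, a tree. Define $\Phi\colon Q^{(0)}\to\prod_i T_i$ by sending the principal ultrafilter $\phi_s$ to $(\phi_s|_{P_1},\dots,\phi_s|_{P_k})$. In the $\ell^1$ product metric,
\[
\dist(\Phi(s),\Phi(t))=\sum_i|\{h\in P_i: h \text{ separates } s,t\}|=\dist_Q(s,t).
\]
So $\Phi$ is an isometric embedding of vertex sets. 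It sends edges to edges and, by the median property, squares to squares, so it extends cubewise to $Q$. One point to check is that every vertex of $Q$ is principal; this holds because $S$ is already the full vertex set.

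\emph{Embedding gives a colouring.} Conversely, let $\Phi\colon Q\to T_1\times\dots\times T_k$ be isometric, using the $\ell^1$ product metric as throughout the paper. This is the step needing the most care. Isometric embeddings between median graphs preserve medians, because the median is characterised metrically as the unique point lying on geodesics between each pair of the three points. Take an edge $xy$ of $Q$. Then $\dist(\Phi x,\Phi y)=1$, so the images differ in exactly one coordinate $i$; call it $i(xy)$. The key claim is that $i(xy)$ depends only on the hyperplane $h$ dual to $xy$. It suffices to check this across each square, since the edges dual to $h$ are connected through squares (the carrier of $h$ is connected). In a square $x,y,y',x'$ with $xy$ parallel to $x'y'$, the images form a $4$--cycle of diameter $2$ in a product of trees. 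Such a $4$--cycle must be a product of one edge in $T_i$ and one edge in $T_j$ with $i\ne j$, since a tree contains no $4$--cycle. Opposite sides therefore change the same coordinate. This also shows that crossing hyperplanes receive different indices, because they are the two edge-directions of some square. Hence $h\mapsto i(h)$ is a proper $k$--colouring of $\Cross(Q)$.

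The main obstacle is the square argument in the second direction: one must rule out degenerate images of a square, such as two sides changing the same coordinate. This follows from isometry together with the absence of $4$--cycles in trees. Beyond that, the proof needs only the standard fact that the set of edges dual to a hyperplane is connected via squares.
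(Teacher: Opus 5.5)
Your argument is correct if ``isometric embedding'' is read combinatorially: the $1$--skeleton of $Q$ goes into the Cartesian product of simplicial trees, with graph metrics. The paper gives no proof of its own. It only cites Holloway's thesis and calls the lemma a straightforward consequence of Sageev's construction. Your first direction is exactly that consequence, and your second direction is the standard square argument.

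One point needs tightening. Your opening reduction to vertex sets is justified for the direction in which you build the embedding, but not for the converse. Suppose $\Phi$ is an isometric embedding of the piecewise-$\ell^1$ complex $Q$ into an $\ell^1$ product of metric trees, which is the convention stated at the start of the paper. Then nothing forces vertices to go to vertices, or edges to be parallel to a factor. For instance, a single edge embeds in $\R\times\R$ as $t\mapsto(t/2,t/2)$. So the step ``$\dist(\Phi x,\Phi y)=1$, so the images differ in exactly one coordinate'' can fail.

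The repair is essentially the median-preservation remark you make but never use. Take a square $x,y,y',x'$ of $Q$, in cyclic order. Then $\Phi y$ lies on an $\ell^1$--geodesic from $\Phi x$ to $\Phi y'$. Since the $\ell^1$ metric is a sum of the tree metrics, this forces $\pi_i\Phi y$ onto the geodesic $[\pi_i\Phi x,\pi_i\Phi y']$ in $T_i$ for every $i$, and similarly for the other three vertices. In a tree, these four betweenness relations force $\{\pi_i\Phi y,\pi_i\Phi x'\}=\{\pi_i\Phi x,\pi_i\Phi y'\}$. Consequently, in each $T_i$, opposite sides of the square have equal length, and at most one of the two hyperplanes of the square has positive length. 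Using your connectivity of the edges dual to $h$ through squares, the length $\delta_i(h)$ of any edge dual to $h$ in $T_i$ is well defined, and $\sum_i\delta_i(h)=1$. Colouring $h$ by the least $i$ with $\delta_i(h)>0$ is then a proper colouring of $\Cross(Q)$, because crossing hyperplanes span a square. In the combinatorial setting each $\delta_i(h)$ is $0$ or $1$, and this reduces to your argument.
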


\begin{definition}[Out-degree] \label{def:outdeg}
A \emph{pointed} CAT(0) cube complex is a CAT(0) cube complex $Q$ together with a chosen vertex $v_0\in Q$. Since median graphs are bipartite, if $xy$ is an edge of $Q$, then $x$ and $y$ are not equidistant from $v_0$. For a vertex $v\in Q$, the \emph{in-degree} of $v$ is the number of its neighbours $v'$ with $\dist(v_0,v')<\dist(v_0,v)$. The \emph{out-degree} of $v$ is the number of its neighbours $v'$ with $\dist(v_0,v')>\dist(v_0,v)$.
\end{definition}

\section{Frames} \label{sec:frames}

In this section we discuss graphs appearing as intersection graphs of collections of frames, and use them to construct the CAT(0) square complex appearing in \cref{thm:bad_square_complex}.

\begin{definition}[Frame]
A \emph{frame} $R$ is a subset of $\R^2$ that is the boundary of a region of the form $[a,b]\times[c,d]$ or $[a,\infty)\times[c,d]$, where $a<b$ and $c<d$. We say that $R$ is \emph{bounded} in the former case, and \emph{unbounded} in the latter case. The \emph{outside} of a frame is the component $R^-$ of $\R^2\ssm R$ containing $(a-1,c-1)$, and the \emph{inside} is $R^+=\R^2\ssm R^-$.
\end{definition}

Note that formally the frame $R$ is contained in its inside. Each frame defines a bipartition of the plane as $\R^2=R^+\sqcup R^-$. When we say that two frames $R_1$ and $R_2$ intersect, we exactly mean that $R_1\cap R_2\ne\varnothing$. We say that a frame $R_1$ is \emph{nested} in a frame $R_2$ if $R_1$ is contained in the interior of $R_2^+$. Note that nested frames do not intersect. More generally, we say that $R_1$ \emph{begins} in $R_2$ if the left edge of $R_1$ is contained in the interior of $R_2^+$.


\begin{definition}[Fair, strict]
We say a collection $\cal R$ of frames is \emph{fair} if whenever $R_1\in\cal R$ and $R_2\in\cal R$ intersect, either the left edge of $R_1$ is contained in the interior of $R_2^+$ and the right edge of $R_1$ (if it exists) is contained in $R_2^-$, or vice versa.

The collection $\cal R$ is \emph{triangle-free} if no three frames intersect pairwise, and \emph{connected} if $\bigcup_{R\in\cal R}R$ is connected.
See Figure~\ref{fig:strict}, which also appears as \cite[Fig.~1]{rzazewskiwalczak:polynomial}. 

A collection of frames is \emph{strict} if it is fair, connected, and for any two intersecting frames $R_1$ and $R_2$, there is no other frame whose left edge lies in $R_1^+\cap R_2^+$. 
\end{definition}

\begin{figure}[ht]
    \centering
    \includegraphics[height=20mm]{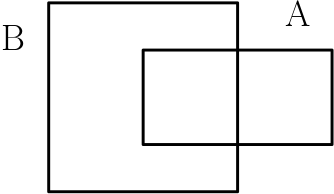}\quad
    \includegraphics[height=20mm]{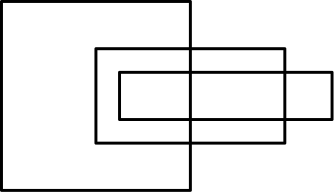}\quad
    \includegraphics[height=20mm]{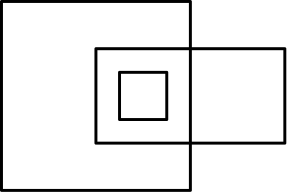}\quad
    \includegraphics[height=20mm]{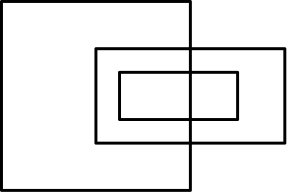}\quad
    \caption{(a) Required configuration of every intersecting pair of frames in a fair collection. (b) Disallowed in a triangle-free collection. (b)--(d) Disallowed configurations in a strict collection.} \label{fig:strict}
\end{figure}

If two frames in a fair collection intersect, then they must intersect as in Figure~\ref{fig:strict}. For frames $A$ and $B$ as on the left of Figure~\ref{fig:strict}; we say that frame $A$ \emph{exits} frame $B$. Note that a strict collection of frames is automatically triangle-free.
A collection $\cal R$ of frames is connected if and only if its intersection graph is connected. 

The relevance of strict collections of frames for us will come from their connection with Burling graphs. Indeed, it follows from the construction of \cite{pawlikkozikkrawczyklasonmicektrotterwalczak:triangle:geometric} that every Burling graph is the intersection graph of a fair collection of frames, and it was shown in \cite{pournajafitrotignon:burling:1} that Burling graphs are exactly the intersection graphs of strict collections of frames. 


Given a frame $R$, let $I(R)=[c,d]$ be the compact interval obtained by projecting $R$ to its second coordinate. If $\cal R$ is a collection of frames, then write $\cal I=\{I(R)\,:\,R\in\cal R\}$. The following is a direct consequence of the definition of fairness.

\begin{lemma} \label{lem:stern}
If $\cal R$ is a fair, connected collection of frames, then for any two intersecting elements of $\cal I$, one is contained in the interior of the other. Moreover, if $I(R_1)\subset I(R_2)$, then the left edge of $R_1$ is to the right of the left edge of $R_2$.
\end{lemma}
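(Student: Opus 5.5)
The plan is to treat intersecting frames first, then propagate the conclusion along paths in the intersection graph, using connectedness of $\bigcup\cal R$.

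\emph{Step 1 (intersecting frames).} Suppose $R_1,R_2\in\cal R$ intersect. By fairness, after relabelling, the left edge of $R_1$ lies in the interior of $R_2^+$ and the right edge of $R_1$, if it has one, lies in $R_2^-$. Write $R_i=\partial([a_i,b_i]\times[c_i,d_i])$, with $b_i=\infty$ allowed. The left edge of $R_1$ is $\{a_1\}\times[c_1,d_1]$, so $[c_1,d_1]\subset(c_2,d_2)$, that is, $I(R_1)\subset\operatorname{int}I(R_2)$. It also gives $a_1>a_2$, so the left edge of $R_1$ is strictly to the right of that of $R_2$. For later use we also record the following. If $b_1<\infty$, the right edge of $R_1$ lies in $R_2^-$ while $c_2<c_1\le d_1<d_2$, which forces $b_1>b_2$. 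In particular $R_2$ is bounded. If $b_1=\infty$, then $R_1$ meets $R_2$ along the right edge of $R_2$, so again $R_2$ is bounded and $R_1$ crosses the vertical line $x=b_2$. In both cases the horizontal edges of $R_1$ cross exactly the right edge of $R_2$.

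\emph{Step 2 (reduction to a minimal bad pair).} Call a pair $R,S$ \emph{bad} if $I(R)\cap I(S)\ne\varnothing$ but the claimed conclusion fails for them. This covers two situations: either neither interval lies in the interior of the other, or $I(R)\subset\operatorname{int}I(S)$ while the left edge of $R$ is not strictly to the right of that of $S$. Since $\cal R$ is connected, its intersection graph is connected, so every pair is joined by a path $R=R_0,R_1,\dots,R_k=S$ of pairwise-intersecting consecutive frames. Choose a bad pair and a path between them with $k$ minimal over all bad pairs. Step 1 rules out $k=1$. Along the path, consecutive intervals are strictly nested by Step 1. So the sequence $I(R_0),\dots,I(R_k)$ rises and falls in the nesting order, and there is an index $j$ whose interval $I(R_j)$ contains its neighbours. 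By minimality, every pair $R_i,R_{i'}$ with $\{i,i'\}\ne\{0,k\}$ whose intervals meet is good. I would use this to show that all of $I(R_1),\dots,I(R_{k-1})$ sit inside $\operatorname{int}I(R_j)$, with left edges to the right of $a_j$.

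\emph{Step 3 (geometric contradiction; the main obstacle).} The work is to derive a contradiction from the two "branches" of the path: $R_0,\dots,R_j$ descending from $R_j$ and $R_j,\dots,R_k$. My first attempt is the following. By Step 1, each descent $R_{i}\to R_{i\pm1}$ is realised by a frame $R_{i\pm1}$ that starts inside $R_i$ and exits through its right edge. Hence the connected set $R_0\cup\dots\cup R_j$ lies in the horizontal strip $\operatorname{int}I(R_j)\times\R$ except along $R_j$ itself. Moreover, $R_0$ begins in $R_j$, via the chain of left edges lying in the interiors of the insides. The same holds for $R_k$. I would then compare $R_0$ and $R_k$ inside $R_j^+$ by induction on $j$: take the first index where the two branches diverge. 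Both next frames begin inside the same frame $T$ and exit through the right edge of $T$. By fairness and planarity, their vertical extents near the right edge of $T$ are either strictly nested or disjoint; they cannot share or cross endpoints, since the horizontal edges would then meet the other frame in a configuration forbidden by fairness. This disjoint-or-strictly-nested dichotomy is inherited down each branch, which gives the contradiction. Making this "inheritance" step rigorous, especially controlling unbounded frames and frames of one branch that wander back into the other, is where I expect the real difficulty. If it resists, I would instead try a direct sweep argument: intersect everything with a horizontal line at a height in $I(R_0)\cap I(R_k)$ and track how the path must cross it.
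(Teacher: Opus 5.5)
Your Step~1 is the whole of what the paper intends. The paper gives no proof beyond calling the lemma a direct consequence of fairness, and what fairness directly gives is your Step~1: for two \emph{intersecting frames} $R_1,R_2$, one of $I(R_1),I(R_2)$ lies in the interior of the other, and the inner frame has its left edge strictly further right. That argument is correct and complete, and it does not even use connectedness.

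Steps~2--3 aim at something stronger: that the conclusion holds for \emph{arbitrary} pairs of frames whose intervals meet, i.e.\ that all of $\mathcal I$ is strictly laminar with left edges ordered accordingly. That stronger statement is false, so neither the minimal-bad-pair argument nor the sweep argument can be completed.

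For the first clause, let $M=\partial([0,20]\times[0,20])$, $X=\partial([2,4]\times[2,10])$, $Z=\partial([3,40]\times[4,6])$ and $Y=\partial([10,30]\times[8,14])$. Here $X$ is nested in $M$, $Z$ exits both $X$ and $M$, $Y$ exits $M$, and there are no other intersections. So the collection is fair and connected, with intersection graph the path $X,Z,M,Y$; it is even strict. Yet $I(X)=[2,10]$ and $I(Y)=[8,14]$ meet while neither contains the other.

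For the ``moreover'' clause, let $P=\partial([0,10]\times[0,30])$, $X'=\partial([5,15]\times[14,16])$, $M'=\partial([4,60]\times[9,21])$ and $Y'=\partial([50,70]\times[10,20])$. This collection is fair and connected, with intersection graph the path $X',P,M',Y'$. It has $I(X')\subset I(Y')$, but the left edge of $X'$ lies to the left of that of $Y'$.

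The step of your plan that fails is the picture of two branches descending monotonically from the peak $R_j$. In the path $X,Z,M,Y$ the peak is $M$. The branch from $M$ goes down to $Z$ and then back up to $X$, which is nested in $M$ rather than exiting it. So $X$ can lie entirely to the left of $Y$ while its interval overlaps $I(Y)$, and no disjoint-or-nested dichotomy is inherited along such zigzags.

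You should therefore stop after Step~1 and read the lemma as a statement about pairs of intersecting frames. That is the only reading under which it is a direct consequence of fairness, and the only one under which it is true.
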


In particular, if $\cal R$ is fair and connected, then $\cal I$ has a unique maximal element.

Let $\cal R$ be a fair collection of frames. Given $R\in\cal R$, let $\cal I_R\subset\cal I$ be the sub-poset consisting of all elements properly contained in $I(R)$. We say that $R_1\in\cal R$ is \emph{outermost in $R_2\in\cal R$} if $I(R_1)$ is a maximal element of $\cal I_{R_2}$. 


\begin{lemma}\label{lem:firm}
For every finite, strict collection $\cal R$ of frames, there exists a finite collection $\cal D=\cal D_1\cup\cal D_2$ of unbounded frames such that:
\begin{itemize}
\item   $\cal R\cup\cal D$ is fair, connected, and triangle-free;
\item   each element of $\cal R$ has at most one outermost frame in $\cal R\cup\cal D$, which is in $\cal D$;
\item   each element of $\cal D_1$ has exactly one outermost frame in $\cal R\cup\cal D$, which is in $\cal R$; 
\item   each element of $\cal D_2$ has at most two outermost frames in $\cal R\cup\cal D$, both from $\cal D$; 
\end{itemize} 
\end{lemma}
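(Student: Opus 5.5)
The plan is to use the containment order on $\cal I$ and to replace each large family of ``children'' by a binary tree of auxiliary unbounded frames. By \cref{lem:stern}, any two intervals of $\cal I$ that meet are properly nested. So $\cal I$ is a laminar family, and $(\cal I,\subset)$ is a rooted tree whose root is the unique maximal element. The outermost frames of $R$ are exactly the children of $I(R)$ in this tree, and they are pairwise disjoint, so they are linearly ordered by height. The goal is to insert new frames so that every node of the enlarged laminar tree has at most two children, with the stated pattern. Elements of $\cal R$ get one child, which lies in $\cal D$. Frames in $\cal D_1$ get one child, which lies in $\cal R$. Frames in $\cal D_2$ get at most two children, both in $\cal D$.

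The construction is local to each $R\in\cal R$; I would carry it out from the root downwards. Let $R_1,\dots,R_m$ be the children of $R$, listed by height, with $m\ge1$ (if $m=0$, nothing is added). For each $R_i$ I add a frame $D^1_i\in\cal D_1$ whose interval is a slight thickening of $I(R_i)$. I then build a balanced binary tree of frames in $\cal D_2$ over the index set $\{1,\dots,m\}$. Each internal node corresponds to a consecutive block $\{i,\dots,j\}$, and its interval is chosen strictly between $I(R)$ and $\bigcup_{i\le k\le j} I(D^1_k)$, with the nesting of intervals matching the nesting of blocks. The root of this tree becomes the unique child of $R$. The intervals must be perturbed so that they stay pairwise laminar; this is possible because the $I(R_i)$ are pairwise disjoint, compact, and properly inside $I(R)$. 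With these choices, the outermost-frame conditions hold by design.

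Next, I place the left edges of the new frames. All new frames are unbounded, so their horizontal edges run off to $+\infty$. A new frame $D$ therefore meets exactly those frames $X$ with $I(D)\subsetneq I(X)$ whose right edge lies to the right of the left edge of $D$. Frames whose intervals are nested inside $I(D)$, or disjoint from it, cannot meet $D$, since the horizontal edges of $D$ sit at heights avoiding their intervals. Any such intersection is automatically fair: the left edge of $D$ lies inside $X^+$, and $X$ has no right edge inside the strip of $D$ unless $X$ is bounded, in which case its right edge lies in $D^-$ only if it crosses the horizontal edges appropriately. This has to be checked, and it dictates placing left edges to the right of those of all ancestors, as \cref{lem:stern} requires. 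I would place the left edge of each new frame just to the right of the left edge of its parent, in the order root of the binary tree, internal nodes, $D^1_i$, and then $R_i$. Connectedness comes from the chain of crossings along the tree.

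The main obstacle is triangle-freeness. A new frame $D$ crosses every ancestor whose right edge lies to the right of the left edge of $D$, and two such ancestors may cross each other. This happens in particular when $R$ itself crosses one of its own ancestors $A$, which is allowed in a strict collection. The first thing I would try is to put the left edge of $D$ very close to the left edge of $R$, so that $D$ crosses only those ancestors that contain a neighbourhood of the left edge of $R$. I would then use strictness to show that at most one ancestor chain is relevant. Strictness says that no left edge lies in $R_1^+\cap R_2^+$ for a crossing pair $R_1,R_2$, so the left edge of $R$, and hence of $D$, cannot lie in the intersection of two crossing frames. If this is not enough, the fallback is to push the left edges of the $\cal D$ frames to the right of all bounded frames containing $I(R)$ and crossing $R$. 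I would accept that $D$ then crosses only its immediate $\cal D$-parent, and verify that this single crossing keeps the collection connected and triangle-free.
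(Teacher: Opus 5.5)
Your combinatorial skeleton is sound. The laminar interval tree, a thickened $\mathcal{D}_1$-copy of each outermost frame, and a tree of $\mathcal{D}_2$-frames over consecutive blocks give the three outermost-frame bullets. The paper uses a linear chain $D'_{R,1},\dots,D'_{R,n(R)}$, with $D'_{R,i}$ containing $D_{R,i}$ and $D'_{R,i-1}$, instead of a balanced tree; this makes no difference.

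The gap is in the geometric placement, which is the only place strictness enters, and your proposal does not close it. Your first attempt fails in exactly the case you flag. Suppose $R$ is bounded and exits a frame $A$, which is then necessarily bounded. The left edge of $R$, and hence the left edge of a new frame $D\in\mathcal{D}_R$ placed just to its right, lies in the interior of $R^+\cap A^+$. Since $D$ is unbounded and $I(D)\subset I(R)\subset I(A)$, it crosses the right edges of both $R$ and $A$, so $D,R,A$ is a triangle. This already occurs in $C_2$ of Appendix~\ref{sec:frames:1}: the duplicate frame exits a copy of $R_a$ and has the other copy of $R_a$ nested in it as its outermost frame. Strictness does not prevent this. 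It forbids the left edge of a \emph{third} frame from lying in $R_1^+\cap R_2^+$, whereas here one member of the crossing pair is $R$ itself, which $D$ also crosses. So your sentence ``the left edge of $R$, and hence of $D$, cannot lie in the intersection of two crossing frames'' is false.

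Your fallback is the right placement: it is the paper's choice of left edges in the interior of $R^+$ but outside every frame that $R$ exits. However, your description of its effect is wrong and the verification is missing. An unbounded frame whose left edge is to the right of that of an unbounded parent with strictly larger interval is nested in that parent, so new frames never cross one another. Hence neither ``$D$ crosses only its immediate $\mathcal{D}$-parent'' nor ``connectedness comes from the chain of crossings along the tree'' holds.

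What $D\in\mathcal{D}_R$ actually crosses is $R$ itself (when $R$ is bounded; this is what gives connectivity) and every bounded frame of $\mathcal{R}$ in which $R$ is nested. These frames are pairwise non-crossing: $R$ is nested in each of the others, and any two of the others would have the left edge of $R$ in the intersection of their insides, which strictness forbids. That is the triangle-freeness argument. Fairness holds because $D$ begins in each frame it crosses and has no right edge.

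You also need strictness to see that the window is usable. By Lemma~\ref{lem:stern} the left edge of each $R_i$ is to the right of that of $R$. If it lay inside a frame $Z$ that $R$ exits, it would lie in $R^+\cap Z^+$, contradicting strictness. Hence every $R_i$, and every descendant of it, lies to the right of all such $Z$, and the new left edges can be placed between those frames and the $R_i$. Without this, pushing left edges rightwards could leave some $R_i$ or a descendant straddling the left edge of a new frame whose interval contains its own, which is an unfair crossing. Your claim that frames with intervals inside $I(D)$ cannot meet $D$ is false in general, since they can cross the left edge of $D$; only the placement rules this out.
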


\begin{proof}
For each $R\in \mathcal{R}$, there exists some maximal collection $R_1,\ldots ,R_{n(R)}\in \mathcal{R}$ such that $I(R_1),\ldots , I(R_{n(R)})\subset I(R)$ are maximal. 
Note that $I(R_1),\ldots , I(R_{n(R)})$ are disjoint. 
We may assume without loss of generality that $I(R_1)< \cdots < I(R_{n(R)})$.
Moreover, by strictness of $\cal R$, no $R_i$ is nested in any frame that $R$ exits (see Figure~\ref{fig:strict}).

We can therefore choose a collection of pairwise non-intersecting unbounded frames $\mathcal{D}_R=\{D_{R,1}, D_{R,1}', \ldots , D_{R,{n(R)}}, D_{R,{n(R)}}'\}$ with left edges contained in the interior of $R^+$ but not the inside of any frame that $R$ exits, 
and such that: 
\begin{itemize}
\item   $R_i$ nests in $D_{R,j}$ if and only if $i=j$; and
\item   $D_{R,i}$ nests in $D_{R,j}'$ if and only if $i\le j$.
\end{itemize} 
See Figure~\ref{fig:firm} for an illustration. Let $\cal D_{R,1}=\{D_{R,1},\dots,D_{R,n(R)}\}$ and $\cal D_{R,2}=\{D'_{R,1},\dots,D'_{R,n(R)}\}$. For $i\in\{1,2\}$, let $\mathcal{D}_i=\bigcup_{R\in\cal R}\cal D_{R,i}$. Set $\cal D=\cal D_1\cup\cal D_2$.

It is clear from the construction that $\cal R\cup\cal D$ is fair and connected. Also, if $\cal R$ is triangle-free, then for each $R\in\cal R$ there can be no $R_i$ with $i\in[n(R)]$ such that $R_i$ 
no element of $\cal D$ can intersect two intersecting elements of $\cal R$, so if $\cal R$ is triangle-free then so is $\cal R\cup\cal D$. 

For every $R\in \cal R$, the frame $D_{R,n(R)}'$ is the unique outermost frame of $\cal R \cup \cal D$ in $R$. 
For every $D_{R,i} \in \cal D$, the frame $R_i$ is the unique outermost frame of $\cal R \cup \cal D$ in $D_{R,i}$. 
Finally, for every $D_{R,i}'\in \cal D$, the frames $D_{R,i}$ and $D'_{R,i-1}$ (when $i>1$) are the only outermost frames of $\cal R \cup \cal D$ in $D_{R,i}'$. This completes the proof.
\end{proof}

\begin{figure}[ht]
\centering\includegraphics[height=8cm]{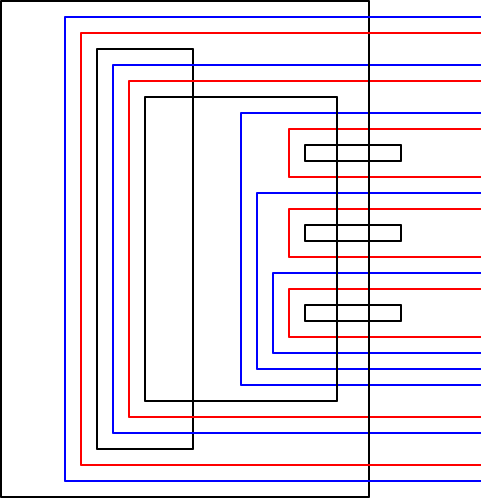}
\caption{Frames $\mathcal{R} \cup \mathcal{D}$ as in the proof of Lemma  \ref{lem:firm}. Frames in $\mathcal{R}$ are black. Frames $D_{R,i}\in\cal D$ are red, and frames $D'_{R,i}\in\mathcal{D}$ are blue.
} \label{fig:firm}
\end{figure}

\bsh{Construction} \label{sh:construction}
Let $\cal R$ be a finite, fair collection of frames. By viewing each $R\in\cal R$ as a bipartition of the plane, $\R^2=R^+\sqcup R^-$, we obtain a wallspace $(\R^2,\cal R)$. As described in Item~\ref{sh:sageev}, this wallspace has a dual CAT(0) cube complex $Q$. Two walls in $Q$ cross if and only if the corresponding frames in $\cal R$ intersect, so the crossing graph of $Q$ is the intersection graph of $\cal R$. In particular, if $\cal R$ is triangle-free, then $\dim Q\le2$.
\esh

\begin{lemma}\label{lem:burdeg}
Let $\cal R$ be a finite, strict collection of frames, and let $\cal D$ be as in \cref{lem:firm}. Let $Q$ be the CAT(0) square complex dual to $(\R^2,\cal R\cup\cal D)$, as described in Item~\ref{sh:construction}. The degree of each vertex of $Q$ is at most five.

Moreover, $Q$ has a vertex $o$ of degree two such that the directed graph $Q_o$ has maximum out-degree at most three.
\end{lemma}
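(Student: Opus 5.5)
The plan is to compute vertex degrees with Lemma~\ref{lem:degree}: the degree of a vertex $\phi$ equals the number of minimal elements of $(\cal R\cup\cal D,<_\phi)$. Every vertex of $Q$ is at finite distance from a principal ultrafilter $\phi_p$ with $p\in\R^2$. Since $\cal R\cup\cal D$ is finite and fair, it is enough to understand the poset structure on the frames locally. The key structural input is Lemma~\ref{lem:stern} together with the outermost-frame counts from Lemma~\ref{lem:firm}. Fix a vertex $\phi$ and let $\cal A=\{h\,:\,\phi(h)=h^+\}$ be the frames that $\phi$ places ``inside''. A minimal element $h$ of $<_\phi$ falls into one of two cases.

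\emph{Case (i): $\phi(h)=h^+$.} Then no other wall has its chosen halfspace contained in $h^+$. Consider the frames $g\in\cal A$ with $g^+\subset h^+$. Nested frames give containment of insides, and by Lemma~\ref{lem:stern} $\cal I$ is laminar. So among the frames of $\cal A$, the minimal ones correspond to minimal intervals in the laminar family $\{I(g):g\in\cal A\}$, up to crossing. Because $\cal R\cup\cal D$ is triangle-free, the frames of $\cal A$ that are not nested in one another form a chain together with at most one crossing partner. I therefore expect to show that $\cal A$ contributes at most two minimal elements: a deepest frame $h$, and possibly one frame crossing $h$.

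\emph{Case (ii): $\phi(h)=h^-$.} Here $h$ is minimal iff there is no $g$ with $\phi(g)\subset h^-$. When $\phi(g)=g^-$ this is automatic for frames disjoint from $h$. When $\phi(g)=g^+$ it means $g^+\subset h^-$, which holds for frames disjoint and not nested in $h$. So the minimal elements with $\phi(h)=h^-$ are among the frames whose inside lies directly inside the innermost element $h_0$ of $\cal A$ (or in the plane, if $\cal A$ is empty). These are the outermost frames in $h_0$, together with frames exiting $h_0$ and frames crossing $h_0$. Frames $g$ with $g^+\subset$ some other candidate's inside are excluded, since then $\phi(g)=g^-\supset$ that inside fails the containment. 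By Lemma~\ref{lem:firm}, each frame has at most two outermost frames.

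Adding the cases should give the bound: at most two minimal inside-walls, at most two outermost frames, and at most one further frame crossing, for a total of at most five. The main obstacle is the careful bookkeeping of crossings. If $h_0$ crosses a frame $k$ and $\phi$ separates them, then frames beginning in $h_0^+\cap k^+$ or nested there could add minimal elements. I would use strictness here: no frame has its left edge in $R_1^+\cap R_2^+$ for intersecting $R_1,R_2\in\cal R$. I would also use the construction of $\cal D$, whose left edges avoid insides of frames that $R$ exits, to rule these out. This would yield the configuration-by-configuration count, which I would organise by the position of $p$ relative to the at most two frames crossing there.

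For the second statement, take $o=\phi_p$ with $p$ far to the lower left, outside every frame. Its minimal elements are the frames whose insides are maximal. By connectedness and Lemma~\ref{lem:stern}, these are the frame with the unique maximal interval together with at most one frame crossing it, giving degree two after checking. For the out-degree at $\phi$, note that moving away from $o$ means flipping some $h$ from $h^-$ to $h^+$, so the out-neighbours are exactly the case (ii) minimal elements. These are bounded by the two outermost frames plus at most one crossing frame, giving three.
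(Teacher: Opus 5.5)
Your overall accounting matches the paper's: at most two walls whose insides contain $p$, one wall that the innermost of these begins in, and two further walls coming from $\cal D$. You also correctly identify out-neighbours as the minimal $h$ with $\phi(h)=h^-$, and your case (i) bound of two is fine. The proof nevertheless has genuine gaps. First, your case (ii) analysis and final count are phrased in terms of a point $p$ and its region, but you never show that every vertex of $Q$ is principal. Your remark that each vertex is at finite distance from some $\phi_p$ is vacuous here, since there are only finitely many walls. This step is not automatic: outsides of frames are not convex, so pairwise-intersecting chosen halfspaces need not have a common point a priori. The paper proves that $p\mapsto\phi_p$ is onto by choosing $p$ minimising $\dist(\phi,\phi_p)$, then using fairness and triangle-freeness to show that a $<_{\phi_p}$-minimal disagreeing wall can be crossed on its own.

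Second, and more seriously, the case (ii) bound does not follow from what you cite, and the key idea is missing. That idea is that every frame of $\cal D$ is unbounded. By Lemma~\ref{lem:stern}, a frame with smaller interval has its left edge further right. Hence every frame beginning in the innermost frame $R_1$, apart from at most two designated ones, is nested in an unbounded frame not containing $p$, and so cannot be minimal.

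Counting outermost frames of $h_0$ via Lemma~\ref{lem:firm} is the wrong invariant. If $R_1=D_{R,i}\in\cal D_1$ and its outermost frame $R'\in\cal R$ is bounded, then the outermost frame of $R'$ exits $R'$ and is also minimal, even though it is not outermost in $R_1$. Conversely, your list admits every frame exiting $h_0$ as a candidate, and a frame of $\cal R$ can be exited by arbitrarily many pairwise disjoint frames.

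The same mechanism is what gives degree two at $o$. In $\cal R$ alone, a chain of frames, each exiting the previous one and lying successively further right of the frame $R$ with maximal interval, would all be minimal at $o$. It is $D'_{R,n(R)}$ that absorbs them.

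Your plan to use strictness for crossing pairs also fails. Strictness constrains only pairs from $\cal R$, and $\cal R\cup\cal D$ is not strict: a bounded $R$ and $D'_{R,n(R)}$ intersect, and the left edges of the $D_{R,j}$ lie in $R^+\cap(D'_{R,n(R)})^+$. This is exactly the configuration producing the two extra minimal walls. The paper instead handles the frames that $R_1$ exits by triangle-freeness. They are pairwise disjoint and all contain the left edge of $R_1$, so they form a nested chain, and only the outermost member with $p$ outside can be minimal. That member is nested in $R_2$ when $R_2\ne R_1$.

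A minor point: in case (ii), $\phi(g)=g^-$ is not harmless for every $g$ disjoint from $h$; it fails when $h$ is nested in $g$.
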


\begin{proof}
Let us write $\cal R'=\cal R\cup\cal D$. By Lemma~\ref{lem:degree}, given $\phi\in Q$ we must bound the number of minimal elements of $(\cal R',<_\phi)$. We proceed in two steps. First we show that the map $\R^2\to Q$, given by sending $p\in\R^2$ to the principal ultrafilter $\phi_p$ it defines, is onto. We then use the properties of $\cal R'$ to understand the minimal elements of $(\cal R',<_{\phi_p})$ for $p\in\R^2$.

Let $\phi\in Q$, and let $p\in\R^2$ minimise $d=\dist(\phi,\phi_p)$. If $d=0$ then $\phi=\phi_p$. Otherwise, let $P=\{R\in\cal R'\,:\,\phi(R)\ne\phi_p(R)\}$. It is finite and nonempty. Let $R$ be one of the minimal elements of $(P,<_{\phi_p})$. Let us show that there is a region of $\R^2$ that defines an orientation of $\cal R'$ that differs from $\phi_p$ only on $R$. 

If there is not such region, then there is some minimal collection $R_1,\dots,R_k$ of elements of $\cal R'$ separating $p$ from $R$. These frames separate $p$ from the top side of $R$. If $p$ is inside $R$, then, after relabelling the $R_i$, fairness of $\cal R'$ implies that $p$ is inside $R_1$ and either: $k=1$ and $R_1$ is nested in $R$; or $R_1$ exits $R$. In the latter case, the fact that $\cal R'$ is triangle-free implies that $R_1,\dots,R_k$ cannot separate $p$ from the right edge of $R$, a contradiction. Thus $k=1$ in this case. A similar analysis when $p$ is outside $R$ again implies that $k=1$. But by minimality of $R$, we must now have that $\phi(R_1)=\phi_p(R_1)$, which contradicts the fact that $\phi$ is an ultrafilter. 

We have shown that there is a region of $\R^2$ that defines an orientation of $\cal R'$ that differs from $\phi_p$ only on $R$. If $q$ is a point in it, then by construction we have $\dist(\phi_q,\phi)=d-1$, contradicting the choice of $p$. Thus each vertex of $Q$ is represented by at least one region of $\R^2$. 

\medskip 

Controlling the number of minimal elements of $(\cal R',<_\phi)$ is equivalent to controlling the number of $R\in\cal R'$ that bound regions of $\R^2$ representing $\phi$. Firstly, consider a point $p$ on the outside of all frames. By \cref{lem:stern}, $\cal I$ has a unique maximal element, corresponding to some $R\in\cal R$. By \cref{lem:firm}, there is a unique element of $\cal R'$ that is outermost in $R$, and it is in $\cal D$, hence unbounded. Thus the degree of $\phi_p\in Q$ is two. 

Given any other $p\in\R^2$, we shall describe a set of at most five frames that bound the region in which $p$ lies. See Figure~\ref{fig:degree}. 

\begin{figure}[ht]
\begin{center} \makebox[0pt]{\begin{minipage}{1.3\textwidth}
\centering \includegraphics[height=50mm, trim = 0 2mm 0 3mm]{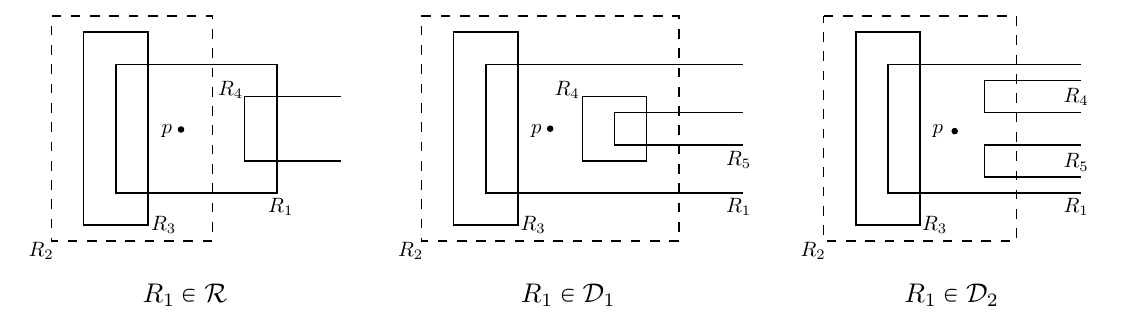}
\end{minipage}}\end{center}
\caption{The possible frames bounding the region of $\R^2$ containing a point $p$ in the proof of \cref{lem:burdeg}. (In the case $R_1\in\cal D_1$, the frame $R_4$ could also exit~$R_3$.) Note that although these frames account for all edges of $Q$ containing $\phi_p$, they do not account for all squares of $Q$ containing $\phi_p$. 
}\label{fig:degree}
\end{figure}

Let $R_1\in\cal R'$ be the frame whose left edge is rightmost among frames that $p$ is inside. Let $R_2\in\cal R$ be the frame whose right edge is leftmost among frames that $p$ is inside, if it exists. It can happen that $R_2=R_1$. By construction, $R_1$ and $R_2$ are the minimal elements of $\cal R'$ that $p$ is inside. We therefore need to understand which elements of $\cal R'$ that $p$ is outside can meet $R_1^+\cap R_2^+$. 

By fairness, any frame meeting the left, top, or bottom edge of $R_1$ has the left edge of $R_1$ in its inside. That is, $R_1$ begins in it. Let $R_3$ be the frame whose right edge is rightmost among frames with $p$ on the outside that $R_1$ begins in, if it exists. If $R_2\ne R_1$, then $R_3$ is nested in $R_2$, for otherwise $R_1,R_2,R_3$ would intersect pairwise.

The only other frames that we need to consider are those that begin in $R_1$ and $R_2$ (if it exists), which may either be nested in $R_1$ or exit it. Note that by minimality of $R_1$, the point $p$ lies outside every frame that begins in $R_1$.  

If $R_1\in\cal R$, then it has at most one outermost frame $R_4\in\cal R'$, which is in $\cal D$. Since $\cal R\cup\cal D$ is triangle-free, it is disjoint from $R_2$ if the latter exists. All elements of $\cal D$ are unbounded, so every other element of $\cal R'$ that begins in $R_1$ is nested in $R_4$.

If $R_1\in\cal D_1$, then it has exactly one outermost frame $R_4$ in $\cal R'$, which is in $\cal R$. If $R_4$ is unbounded, then every other element of $\cal R'$ that begins in $R_1$ is nested in $R_4$. Otherwise, $R_4$ has at most one outermost frame $R_5\in\cal R'$, which is in $\cal D$, hence unbounded. Every element of $\cal R'$ that begins in $R_1$ other than $R_4$ and $R_5$ is nested in $R_5$.

Finally, if $R_1\in\cal D_2$, then it has exactly two outermost frames $R_4$ and $R_5$ in $\cal R'$, both of which are in $\cal D$, hence unbounded. Every element of $\cal R'$ that begins in $R_1$ other than $R_4$ and $R_5$ is nested in either $R_4$ or $R_5$.

In all cases, we have $p\in R_1^+\cap R_2^+\cap R_3^-\cap R_4^-\cap R_5^-$ by construction. The only way that $p$ affects the definition of $R_1,\dots,R_5$ is in terms of the data of whether it is inside or outside each element of $\cal R$, or in other words in terms of the principal ultrafilter $\phi_p$. Hence if $p'$ is another point with $\phi_{p'}=\phi_p$, then the list $R_1,\dots,R_5$ obtained from $p'$ will be the same. Thus the poset $(\R,<_{\phi_p})$ has at most five minimal elements. This shows that all vertices of $Q$ have degree at most five.

For the moreover statement, let $p_0$ be a point on the outside of all frames, and let $o=\phi_{p_0}\in Q$. We have already seen that $o$ has degree two. For a vertex $\phi_p\in Q$, the out-degree is equal to the number of neighbours $\phi_q$ of $\phi_p$ such that if $R\in\cal R'$ is the wall separating $\phi_p$ from $\phi_q$, then $p\in R^-$ and $q\in R^+$. It follows from our above description of the neighbours of $\phi_p$ that there are at most three such neighbours; see Figure~\ref{fig:degree}.
\end{proof}

We can now prove the following, which is close to \cref{thm:bad_square_complex}, except we have not yet embedded the square complex into $\R^3$.
See \cref{sec:frames:1} for an explicit construction of a sequence of collections of frames that can be used in the below proof.

\begin{theorem} \label{thm:burbox}
There is a CAT(0) square complex $Q$ with a vertex $o\in Q$ such that $Q$ has maximum degree at most five, $Q_o$ has maximum out-degree at most three, and $Q$ cannot be isometrically embedded into any finite product of trees.
\end{theorem}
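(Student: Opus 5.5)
The plan is to combine the finite construction of Lemma~\ref{lem:burdeg} with the unbounded chromatic number of Burling graphs, gluing infinitely many finite pieces into a single complex. For each $n$, the Burling graph $B_n$ has chromatic number at least $n$. By \cite{pournajafitrotignon:burling:1}, $B_n$ is the intersection graph of a finite strict collection $\cal R_n$ of frames. Apply Lemma~\ref{lem:firm} to obtain $\cal D_n$, and let $Q_n$ be the CAT(0) square complex dual to $(\R^2,\cal R_n\cup\cal D_n)$. By Lemma~\ref{lem:burdeg}, $Q_n$ has degree at most five. It also has a vertex $o_n$ of degree two, with $(Q_n)_{o_n}$ of maximum out-degree at most three.

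By Item~\ref{sh:construction}, the crossing graph $\Cross(Q_n)$ is the intersection graph of $\cal R_n\cup\cal D_n$. This graph contains $B_n$ as an induced subgraph, so it has chromatic number at least $n$. Hence, by Lemma~\ref{lem:crossing_colour}, $Q_n$ admits no isometric embedding into a product of fewer than $n$ trees.

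To obtain a single complex, glue the $Q_n$ along a ray. Take a ray $L$ with vertices $v_1,v_2,\dots$ and identify $o_n$ with $v_{2n}$, say, so that distinct $Q_n$ attach at distinct vertices. A tree-like wedge of CAT(0) cube complexes glued at single vertices along a tree is again CAT(0). Its crossing graph is the disjoint union of the $\Cross(Q_n)$ together with isolated vertices for the edges of $L$, since the hyperplanes of different pieces do not cross.

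Next we check the degrees. Each $v_{2n}$ has degree $2+2=4$, and the other vertices of $L$ have degree at most $2$. Vertices inside each $Q_n$ keep degree at most five. Since each $Q_n$ is an isometrically embedded convex subcomplex of $Q$, every finite product of $k$ trees would fail to contain $Q_k$. So $Q$ admits no isometric embedding into any finite product of trees. (Alternatively, $\Cross(Q)$ is not $k$-colourable for any $k$.)

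The main care point is the out-degree condition. Choose $o=v_1$, so distance from $o$ increases along $L$ and then into $Q_n$ through $v_{2n}$. For a vertex $w$ of $Q_n$, geodesics from $o$ pass through $v_{2n}=o_n$. Hence out-degrees in $Q_o$ agree with those in $(Q_n)_{o_n}$ and are at most three. At $v_{2n}$ itself the out-neighbours are $v_{2n+1}$ and the two neighbours of $o_n$ in $Q_n$, giving three. Other ray vertices have out-degree one. Checking that gluing at a degree-two base vertex keeps everything within the bounds is the only delicate point, and this is exactly why Lemma~\ref{lem:burdeg} provides $o$ of degree two.
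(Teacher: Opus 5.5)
Your proposal is correct and follows the paper's proof essentially step for step. You realise high-chromatic Burling graphs by strict frame collections, add the frames of Lemma~\ref{lem:firm}, use Lemma~\ref{lem:burdeg} for the degree and out-degree bounds, and glue the degree-two base vertices $o_n$ along a ray; the paper glues $o_k$ to $k\in\N$ and takes $o=1$ rather than your $v_{2n}$ and $v_1$, which makes no difference. One trivial slip: with $\chi(B_n)\ge n$, it is $Q_{k+1}$, not $Q_k$, that cannot embed in a product of $k$ trees, which is why the paper takes $\chi(B_k)>k$.
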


\begin{proof}
For each $k$, let $B_k$ be a (finite, connected) Burling graph with chromatic number greater than $k$ \cite{burling:oncolouring}. By \cite{pournajafitrotignon:burling:1}, $B_k$ is the intersection graph of  a finite, strict collection $\cal R_k$ of frames. Let $\cal D_k$ be as in \cref{lem:firm}, and let $Q_k$ be the CAT(0) square complex provided by Item~\ref{sh:construction} for $\cal R_k\cup\cal D_k$. The crossing graph of $Q_k$ contains $B_k$ as a subgraph, so $Q_k$ cannot be isometrically embedded into any product of $k$ trees. By \cref{lem:burdeg}, $Q_k$ has maximum degree five.

Let $o_k\in Q_k$ be the vertex provided by \cref{lem:burdeg}. Consider the infinite path graph $\N$ of positive integers. Let $Q$ be the CAT(0) square complex obtained by gluing, for all $k$, the point $o_k\in Q_k$ to the point $k\in\N$. Since every $Q_k$ isometrically embeds into $Q$, the latter cannot be isometrically embedded into any finite product of trees. Moreover, since every $Q_k$ has maximum degree five and every $o_k$ has degree at most two, $Q$ has maximum degree at most five.

Finally, let $o\in Q$ be the point $1\in\N$, or equivalently $o=o_1\in Q_1$. Since the maximum out-degree of $(Q_k)_{o_k}$ is at most three and $o_k\in Q_k$ has degree two for all $k$, the maximum out-degree of $Q_o$ is also three.
\end{proof}

\section{Square complexes with maximum degree five} \label{sec:R3}

In this section we prove that every CAT(0) square complex with maximum degree at most five admits a topological embedding in $\R^3$. In fact, we shall prove the stronger statement \cref{thm:5_in_3-space}. 

Recall that $K_{1,n}$ denotes the complete bipartite graph on one and $n$ vertices. By a \emph{page} of $K_{1,n}\times\R$, we mean a subset of the form $e\times\R$, where $e$ is an edge of $K_{1,n}$.

\begin{lemma} \label{lem:tree_gluing}
Let $n\ge 2$ be an integer. Let $Q$ be a simply connected topological space. Let $A$ be a discrete collection of subsets of $Q$, each of which is homeomorphic to $[0,1]$ and disconnects $Q$. Let $\cal Q$ be a collection of topological spaces obtained by disconnecting into two components along every element of $A$. Assume that for each $a\in A$, the disconnection $Q=Q'\cup_aQ''$ is such that, after relabelling, $a$ is contained in the boundary of $Q'$ and also has a planar open neighbourhood in $Q'$. If every $Q'\in\cal Q$ admits a topological embedding in $K_{1,n}\times\R$, then $Q$ admits a topological embedding in $K_{1,n+1}\times\R$.
\end{lemma}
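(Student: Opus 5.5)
We build the embedding of $Q$ inductively along the tree of pieces. Cutting $Q$ along all arcs $a\in A$ produces the collection $\cal Q$. Form the graph $T$ with a vertex for each piece $Q'\in\cal Q$ and an edge for each $a\in A$ joining the two pieces it separates. Since every $a$ disconnects $Q$ and $Q$ is simply connected (in particular connected), $T$ is a tree. Fix a root piece $Q_0$ and process the pieces in order of distance from $Q_0$ in $T$. The extra page of $K_{1,n+1}$ will serve as a ``buffer'' page. Through it each new piece can be attached to the already-embedded part along the arc $a$, without interfering with what has been built.

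First I would normalise the embeddings of the pieces. For each $a$, the hypothesis gives a distinguished side $Q'$ in which $a$ lies in the boundary and has a planar open neighbourhood $U_a$, homeomorphic to a closed half-disc with $a$ as its diameter. Given an embedding $f\colon Q'\to K_{1,n}\times\R$, I would modify $f$ near $a$ so that a thin collar of $a$ in $U_a$ is mapped onto a small rectangle $[0,\eps]\times J_a$ in a single page. Here $J_a\subset\R$ is a compact interval, $a$ is sent to $\{\eps\}\times J_a$ or to the spine $\{0\}\times J_a$, and no other point of $f(Q')$ meets a neighbourhood of it. This uses a local planar flattening argument, essentially the Schoenflies theorem applied in a page.

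The second ingredient is a rescaling trick. Since $\R$ is homeomorphic to any open interval, each piece can be squeezed vertically into a strip $K_{1,n}\times(s,t)$ with $(s,t)$ chosen freely. The discreteness of $A$ lets us choose these intervals pairwise disjoint among the children of any fixed piece. Now embed $Q_0$ in $K_{1,n}\times\R\subset K_{1,n+1}\times\R$, leaving the new page $P=e_{n+1}\times\R$ empty. Suppose a child $Q''$ is attached to the embedded part along $a$, where the collar of $a$ lies in a page near height $J_a$. I would embed $Q''$ into a fresh copy of $K_{1,n+1}\times I$ for a small interval $I$ near $J_a$. The arc $a$ is routed through the spine region and then into $P$, and $Q''$ is placed so that it only meets the previous image along $a$. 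To make this possible I would re-root: the page containing $a$'s collar on the $Q''$ side is identified with $P$, and the remaining pages of $Q''$ are placed in the other $n$ pages but at heights inside $I$, which are disjoint from the previous image.

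The main obstacle is this disjointness at each gluing step. The previously embedded part may already occupy all heights in $I$ on other pages, so the heights have to be reserved in advance. Before embedding a piece, I would first shrink its image so that it avoids small height windows $I_a$ around each of its boundary arcs $a$, occupying only the planar collar there. Doing this uses the planar-neighbourhood hypothesis, which gives room to push everything else away from $a$. Each child is then embedded entirely inside $K_{1,n+1}\times I_a$. The windows are disjoint, and recursion inside each window works because $I_a\cong\R$. Local finiteness from discreteness of $A$ keeps the limiting map continuous and injective, and a closedness check then shows it is an embedding.
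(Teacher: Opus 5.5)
\textbf{There is a genuine gap.} You reserve a height window $I_a$ only around arcs that are boundary arcs, with a half-disc collar, of the piece being embedded. You then place the child inside $K_{1,n+1}\times I_a$. For such an arc the idea does work: the collar can be pushed into a finger hanging into the window.

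The problem is that your scheme tacitly assumes that along every edge of $T$ the \emph{parent} is the distinguished (planar) side of $a$. The hypothesis does not give this, and no choice of root arranges it. A single piece can be the non-planar side of several arcs. Examples are the piece containing $E'$ in its interior in \cref{prop:tripod_times_R}, and the piece keeping three squares at a four-square edge in \cref{thm:5_in_3-space}. Such a piece has at most one parent, so along its other such arcs it is the parent with $a$ in its interior.

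Along such an arc no window can be reserved. Suppose $a$ lies in the interior of a disc $D\subset Q'$, and $f(Q')\cap(K_{1,n}\times I_a)$ is contained in the image of a small subdisc around $a$. Consider the component of $f^{-1}(K_{1,n}\times I_a)$ containing $a$. Its frontier is a compact subset of $D$ separating $a$ from $\partial D$, so it has nonzero $\check H^1$. But this frontier embeds in $K_{1,n}\times\partial I_a$, a disjoint union of two trees, whose compact subsets have trivial $\check H^1$. Intuitively, the level sets of $K_{1,n}\times\R$ are trees, so a two-sided sheet through $a$ cannot be capped off inside a slab. Hence the step ``each child is then embedded entirely inside $K_{1,n+1}\times I_a$'' fails for these children, and your proposal gives them nowhere else to go.

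\textbf{The missing idea is to avoid nesting altogether.} In the paper, every piece is embedded with \emph{all} its arcs on the spine. Each piece is squeezed into its own slab $K_{1,n}\times I(Q')$, and these slabs are pairwise disjoint. The slabs are arranged according to a lexicographic (``pamphlet'') order on $T$, built piece by piece from the order in which each piece's arcs appear along the spine. This yields an embedding $g\colon T\to[0,1]\times\R$ with all vertices on $\{0\}\times\R$.

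Each gluing is then realised by a thickened edge of $g(T)$ in the $(n+1)$st page, so the bands are automatically disjoint. The collar hypothesis is needed only on the side where the band passes through the spine. The other side just receives the band from the otherwise empty extra page, so interior arcs cause no difficulty.

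\textbf{A smaller omission: orientation.} A band lying in a single page and joining two arcs on the spine reverses their order. So each child's embedding must be reflected so that the two copies of $a$ are oppositely oriented along the spine. Otherwise the band realises the wrong identification. Your proposal does not address this.
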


\begin{proof}
Given $a\in A$, there are exactly two elements $Q',Q''\in\cal Q$ that contain paths coming from $a$ in the decomposition. We refer to these paths as being \emph{induced} by $a$. Given $Q'\in\cal Q$, let $B(Q')$ be the set of paths in $Q'$ induced by elements of $A$, each of which is homeomorphic to $[0,1]$. Fix an orientation of each $a\in A$. This induces an orientation of each $b\in B(Q')$ for each $Q'\in\cal Q$. 

By assumption, if $Q'\in\cal Q$, then we can embed it in $K_{1,n}\times\R$. We can arrange for every element of $B$ to lie along the central axis of $K_{1,n}\times\R$. Moreover, if $b\in B(Q')$ is contained in the boundary of $Q'$, then we can arrange that the central axis of $K_{1,n}\times\R$ has an open neighbourhood of $b$ that intersects $Q'$ in precisely $b$. We call such an embedding \emph{good}. We refer to an element of $B(Q')$ as being \emph{positively oriented} by the embedding if its orientation agrees with the orientation of $\R$ given by the total order. By reflecting the embedding, we can invert the orientations of all elements of $B(Q')$. See Figure~\ref{fig:arranging}.

\begin{figure}[ht]
\centering\includegraphics[height=55mm, trim = 0 3mm 0 3mm]{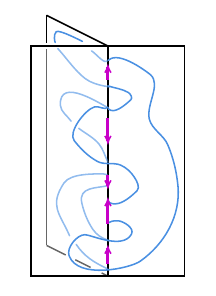}
\caption{Aligning $B(Q')$, in pink, along the central axis of $K_{1,n}\times\R$, with orientations indicated, in the proof of \cref{lem:tree_gluing}.} \label{fig:arranging}
\end{figure}

Consider the graph $T$ with a vertex for each element of $\cal Q$ and an edge from $Q'\in\cal Q$ to $Q''\in\cal Q$ whenever they contain paths induced by the same element of $A$. The graph $T$ keeps track of the cutting that we did to obtain $\cal Q$. Since $Q$ is simply connected, $T$ is a tree. Fix an element $Q_0\in\cal Q$, which we view as a root of $T$.

We shall inductively define a lexicographic order $\prec$ on the vertices of $T$, and for each $Q'\in\cal Q$ make a choice of a good embedding $f_{Q'}:Q'\to K_{1,n}\times\R$. For a nonnegative integer $n$, write $T_n$ for the set of vertices of $T$ at distance exactly $n$ from the root $Q_0$. 

Make an arbitrary choice of $b_{Q_0}\in B(Q_0)$. Let $f_{Q_0}:Q_0\to K_{1,n}\times\R$ be a good embedding. The elements of $B(Q_0)$ are ordered according to their position in the central axis after applying $f_{Q_0}$. Write $\le$ for this order, since it is induced by $\R$. Each element $b\in B(Q_0)$ corresponds to some vertex $Q_b\in T_1$. Define $\prec$ transitively on $T_1\cup\{Q_0\}$ as follows; see Figure~\ref{fig:ordering}. For $b_1,b_2\in B(Q_0)$:
\begin{itemize}
\item   if $b_1<b_{Q_0}$ then $Q_{b_1}\prec Q_{b_0}$;
\item   if $b_1>b_{Q_0}$ then $Q_{b_0}\prec Q_{b_1}$;
\item   if $b_1\le b_2<b_{Q_0}$ or if $b_{Q_0}\le b_1\le b_2$, then $Q_{b_2}\prec Q_{b_1}$.
\end{itemize}
Also declare that if $Q_1,Q_2\in T_1$ satisfy $Q_1\prec Q_2$, then $Q_1'\prec Q_2'$ for every pair of descendants in the rooted tree $T$ of $Q_1'$ and $Q_2'$, respectively. 

\begin{figure}[ht]
\centering\includegraphics[height=45mm, trim = 0 3mm 0 3mm]{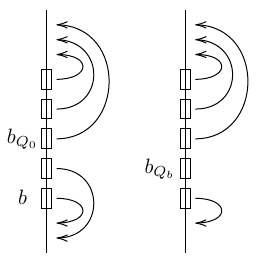}
\caption{The order $\prec$ in \cref{lem:tree_gluing} is like unfolding a pamphlet. \\ Left: defining $\prec$ on $T_1\cup\{Q_0\}$. Right: defining $\prec$ on $Q_b$ and its children.} \label{fig:ordering}
\end{figure}

Given $Q_b\in T_1$, there is a path $b_{Q_b}\in B(Q_b)$ that is induced by the same element of $A$ as $b\in B(Q_0)$. Let $f_{Q_b}:Q_b\to K_{1,n}\times\R$ be a good embedding such that the orientations of $f_{Q_0}(b)$ and $f_{Q_b}(b_0)$ in $\R$ are opposite to one another. Similarly to before, the elements of $B(Q_b)$ are ordered according to their position in the central axis of $K_{1,n}\times\R$ after applying $f_{Q_b}$, and each corresponds to a vertex of $T_2$ that is a child of $Q_b$. 

Since $Q_b$ is the root of the subtree of $T$ consisting of $Q_b$ and all its descendants, we can repeat the same construction as for $Q_0$, with $b_{Q_b}\in B(Q_b)$ playing the role of $b_{Q_0}\in B(Q_0)$, and using only the children of $Q_b$ (that is, we exclude the parent $Q_0$ of $Q_b$). This extends the definition of $\prec$ to the children of $Q_b$. Doing this for every $Q'\in T_1$, we have now defined $\prec$ on $\bigcup_{i\le2}T_i$. By iterating this process, we obtain the desired lexicographic order $\prec$ on $T$, and a choice of good embedding $f_{Q'}:Q'\to K_{1,n}\times\R$ for each $Q'\in\cal Q$.

We now use the data of $\prec$ and $\{f_{Q'}\,:\,Q\in\cal Q\}$ to define an embedding $\bigsqcup_{Q'\in\cal Q}Q' \to K_{1,n}\times\R$. Since $\prec$ is lexicographic, there is a topological embedding $g:T\to[0,1]\times\R$ such that each vertex lies in $\{0\}\times\R$. In particular, for each vertex $Q'\in T$ there is an open interval $I'(Q')\subset\R$ centred on $g(Q')$ such that if $Q''\ne Q'$, then $g(Q'')\notin I'(Q')$. Let $I(Q')\subset\R$ be the open middle third of $I'(Q')$. It contains $g(Q')$, and the closures of the intervals $I(Q')$ for $Q'\in\cal Q$ are pairwise disjoint.

For each $Q'\in\cal Q$, we have a fixed good embedding $f_{Q'}:Q'\to K_{1,n}\times\R$. By applying a homeomorphism, we can assume that its image is contained in $K_{1,n}\times I(Q')$. Since the closures of the intervals $I(Q')$ are pairwise disjoint, the disjoint union $f=\bigsqcup_{Q'\in\cal Q}f_{Q'}$ gives a topological embedding $f:\bigsqcup_{Q'\in\cal Q}Q'\to K_{1,n}\times\R$, and the images of the $Q'$ appear in the same order as given by $\prec$.

Having used $f$ to align the spaces $Q'\in\cal Q$ inside $K_{1,n}\times\R\subset K_{1,n+1}\times\R$, we shall re-glue to obtain an embedding of $Q$, following the pattern of $g(T)$. 

Let $a\in A$, and let $Q'$ and $Q''$ be the two elements of $\cal Q$ containing paths $a'\in B(Q')$ and $a''\in B(Q'')$ induced by $a$. By assumption, after relabelling we have that $a'$ is contained in the boundary of $Q'$ and has a planar open neighbourhood in $Q'$. We can therefore use the $(n+1)^\mathrm{th}$ page of $K_{1,n+1}\times\R$ to connect $a'$ to $a''$. Since the orientations of $a'$ and $a''$ are opposite, this re-gluing agrees with the original disconnection of $Q$ along $a$. See Figure~\ref{fig:type_1_regluing}. Moreover, this re-gluing constructs a thickened copy of the corresponding edge of $g(T)$ inside $[0,1]\times\R$, which is identified with the $(n+1)^\mathrm{th}$ page of $K_{1,n+1}\times\R$. Since $g$ is an embedding, no two such re-gluings conflict with one another. Thus if we do this re-gluing for every edge of $T$ then we obtain an embedding of $Q$ in $K_{1,n+1}\times\R$.
\end{proof}

\begin{figure}[ht]
\centering\includegraphics[height=60mm, trim = 0 5mm 0 3mm]{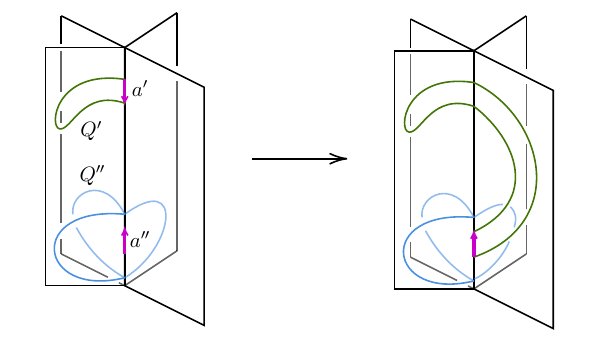}
\caption{Using the $(n+1)^\mathrm{th}$ page of $K_{1,n+1}\times\R$ to re-glue $Q'$ and $Q''$ in the proof of \cref{lem:tree_gluing}.} \label{fig:type_1_regluing}
\end{figure}

If $Q$ is a CAT(0) square complex, then an edge $e\subset Q$ is a \emph{boundary edge} if it is contained in at most one square. 

\begin{proposition} \label{prop:tripod_times_R}
Let $Q$ be a CAT(0) square complex whose vertices all have degree at most five. If no edge of $Q$ is contained in four squares, then $Q$ admits a topological embedding in $K_{1,3}\times\R$.
\end{proposition}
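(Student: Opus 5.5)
The plan is to cut $Q$ along edges that lie in three squares, embed the resulting pieces in the plane $K_{1,2}\times\R$, and reassemble them with \cref{lem:tree_gluing} for $n=2$, using the third page to re-glue. The local picture is as follows. Degree at most five and CAT(0) mean every vertex link is a triangle-free (flag, dimension $\le 2$) graph on at most five vertices. Since no edge lies in four squares, every link vertex has degree at most three. An edge of $Q$ lying in three squares corresponds to a link vertex of degree three.

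\textbf{Step 1 (the planar case).} First show that if every edge of $Q$ lies in at most two squares, then $Q$ is a surface with boundary, hence embeds in $\R^2\cong K_{1,2}\times\R$. Every link is then a triangle-free graph of maximum degree two, so a disjoint union of paths and cycles of length at least four. Connected links of this kind are a cycle or a path. A disconnected link would make $Q$ locally a wedge at $v$, so $v$ would be a cut vertex. I would handle cut vertices first: cut along them and re-glue planar pieces tree-like, which is easy since $Q$ is contractible.

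A contractible $2$--complex whose links are cycles or arcs is a contractible surface with boundary. Such a surface is homeomorphic to a closed subset of the plane: a disk-like region, possibly noncompact, with boundary lines. This gives the planar embedding. I would also record that the embedding can be chosen to send any given boundary arc onto a line, as the ``good embedding'' in \cref{lem:tree_gluing} requires.

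\textbf{Step 2 (choosing the cuts).} Let $E_3$ be the set of edges lying in exactly three squares. Each vertex link has at most five vertices, is triangle-free, and has maximum degree three. I need to check how the degree-three vertices sit in such a link. I would enumerate: triangle-free graphs on at most five vertices with a vertex of degree three are $K_{1,3}$, $K_{1,3}$ plus a vertex, $K_{2,3}$ and its subgraphs, and so on. For each edge $e\in E_3$, choose one of its three squares, $s_e$, and detach $s_e$ from the other two along $e$, so that $e$ becomes a boundary edge of the side containing $s_e$.

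The cut set $A$ is then formed by the maximal arcs made of edges of $E_3$ that meet consistently at vertices. Consistency is arranged so that the detached squares continue along the arc through each vertex. At a vertex whose link is $K_{2,3}$, there are two degree-three link vertices with three common neighbours. Cutting off one common neighbour for both splits the link into a $4$--cycle and a path, which is consistent with an arc passing through $v$.

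Each element of $A$ disconnects $Q$, because $Q$ is contractible and, after the cut, the complex locally falls apart into two sides. After the cut, all pieces have every edge in at most two squares, so Step 1 applies to them. Each side containing the detached squares has a planar neighbourhood of the arc in its boundary.

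\textbf{Step 3 (assembly).} Apply \cref{lem:tree_gluing} with $n=2$, which yields an embedding into $K_{1,3}\times\R$. The main obstacle I expect is Step 2. The difficulties are arranging that the cut arcs are genuine arcs (homeomorphic to $[0,1]$, or lines handled by exhaustion) rather than trees branching at vertices, and that the family $A$ is discrete. The risk is a vertex where three or more $E_3$ edges meet. I would first try the case analysis on the link: at most five vertices with all degrees at most three and triangle-free forces at most two degree-three link vertices, or else a structure like $K_{2,3}$. This should bound the number of $E_3$ edges at a vertex by two, so cut edges form paths. Any remaining bad configuration I would resolve by choosing the detached squares locally.
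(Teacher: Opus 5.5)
\textbf{Verdict.} Your local analysis is sound. At most two edges of $E_3$ meet at a vertex, the link there is $K_{2,3}$, and the detached page must continue consistently along a component. Detaching one page along a component and applying \cref{lem:tree_gluing} with $n=2$ is exactly what the paper does for what it calls Type~1 components. The genuine gap is your assertion that every such arc disconnects $Q$.

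\textbf{Why the disconnection claim fails.} Contractibility only forces disconnection when the cut produces two \emph{disjoint} copies of the arc $E$. That means the detached page $S_i$ must also come free at both endpoints $v,w$, which requires both end edges $f_i\ni v$ and $g_i\ni w$ of $S_i$ to be boundary edges. Maximality of $E$ only guarantees that some $f_i$ and some $g_j$ are boundary edges, possibly with $i\ne j$. In that case one endpoint does not split, and there is no contradiction with simple connectivity.

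\textbf{A concrete counterexample.} Take an edge $vw$ lying in three squares $s_1,s_2,s_3$. Add two squares at $v$ that share a new edge and contain the edges of $s_2,s_3$ at $v$. Add two squares at $w$ that share a new edge and contain the edges of $s_1,s_3$ at $w$. This is a CAT(0) square complex of degree at most five with $E_3=\{vw\}$. However, $Q\ssm vw$ is connected: $s_1$ reaches $s_3$ through the squares at $w$, and $s_3$ reaches $s_2$ through the squares at $v$. So whichever square you detach, there is no decomposition $Q=Q'\cup_{vw}Q''$ of the kind \cref{lem:tree_gluing} requires. Choosing squares ``locally'' cannot fix this, since, as you observed, the page is forced to be constant along $E$.

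\textbf{The missing idea.} You need a second kind of surgery for these Type~2 components. The paper cuts along the interior of $E$ to obtain three copies of $E$. It then detaches $S_1$ at $v$ and $S_2$ at $w$, where $f_1$ and $g_2$ are the boundary edges. This keeps the complex connected and simply connected, and turns the three copies into a single boundary path $v'\to w\to v\to w'$ of length $3|E|$. The paper then applies \cref{lem:tree_gluing} only to the Type~1 components. Each Type~2 path is placed on the central axis with a neighbourhood meeting one page. Its two outer thirds are then folded through the two remaining pages onto the middle third.

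\textbf{A smaller gap: cycles.} You also need to rule out components of $E_3$ that are cycles. The degree count only gives paths or cycles, and a cycle is not an arc as \cref{lem:tree_gluing} requires. The paper handles this by noting that the hyperplane crossing a page parallel to $E$ would then contain an immersed loop, whereas hyperplanes of a CAT(0) square complex are trees.
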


\begin{proof}
Let $B$ denote the set of edges of $Q$ that lie in three squares. First we show that $B$ is a disjoint union of paths. Suppose that $v\in Q$ is a vertex that is contained in at least two edges $e_1,e_2\in B$. Let $s_1,s_2,s_3$ and $t_1,t_2,t_3$ be squares witnessing that $e_1\in B$ and $e_2\in B$, respectively. The edge $e_1$ and the three squares $s_1,s_2,s_3$ provide four neighbours of $v$. Note that $e_2$ cannot be an edge of any $s_i$. Indeed, if $e_2$ were an edge of, say, $s_1$, then since the degree of $v$ is at most five, some $t_i$ would have to share an edge with either $s_2$ or $s_3$, contradicting the CAT(0) condition. Thus the other vertex of $e_2$ is the fifth and final neighbour of $v$. This shows that no edge of $B$ other than $e_1$ and $e_2$ can contain $v$. Hence each connected component of $B$ is either a path or a cycle.

Our description of the neighbours of $v$ moreover shows that, up to relabelling, $s_i$ and $t_i$ must share an edge for each $i$. Supposing that $e_1\cup e_2$ is contained in a connected component of $B$ that is a cycle, let $h$ be the hyperplane of $Q$ that intersects $s_1$ in an interval parallel to $e_1$. Since $s_1$ and $t_1$ share an edge, $h$ also intersects $t_1$ in an interval parallel to $e_2$. By following $h$ along the cycle (up to three times), we find a nontrivial loop in $h$. But $h$ is a tree, because it is a hyperplane in a CAT(0) square complex. This is a contradiction, so $B$ is a disjoint union of paths.

\mk

Now that we know that $B$ is a disjoint union of paths, we aim to apply an ``ungluing'' operation to $Q$ along $B$. Before describing this, we provide a little more information about the connected components of $B$. 

Let $E$ be a connected component of $B$. Each edge of $E$ lies in three squares and, as shown above, the squares for adjacent edges of $E$ are attached along edges adjacent to $E$, forming a $K_{1,3}\times[0,|E|]$ subcomplex. Let $v$ and $w$ be the endpoints of $E$, and let $S_1,S_2,S_3$ be the three pages of this subcomplex: each $S_i$ is a chain of $|E|$ squares attached end-to-end. Let $f_i$ be the edge of $S_i\ssm E$ that contains $v$, and let $g_i$ be the edge that contains $w$. By maximality of $E$, at least one of the $f_i$ is a boundary edge, and at least one of the $g_i$ is a boundary edge. After relabelling, there are two possibilities: either $f_1$ and $g_2$ are boundary edges, or $f_1$ and $g_1$ are boundary edges (or both). See Figure~\ref{fig:boundary}. Fix $j\in\{1,2\}$ such that $g_j$ is a boundary edge. Having made this choice for $E$, we shall refer to $E$ as being of \emph{Type $j$}.

\begin{figure}[ht]
\centering\includegraphics[height=60mm, trim = 0 3mm 0 3mm]{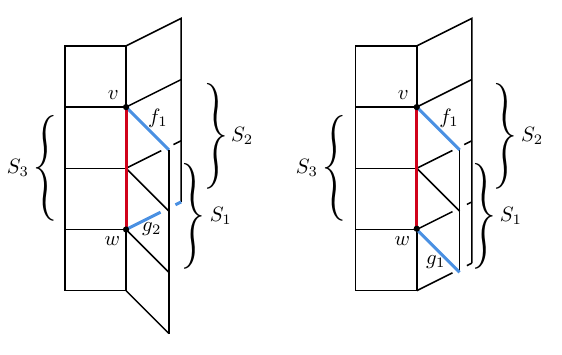}
\caption{A connected component of $B$, with endpoints $v$ and $w$, and the incident squares. On the left $E$ is Type 2, and on the right $E$ is Type 1.} \label{fig:boundary}
\end{figure}

If $E$ is of Type 1 then we detach $S_1$ from $E$, decomposing $Q$ into two connected components $Q'$ and $Q_1$, where $S_1\subset Q_1$. There is an oriented path $E_1$ in the boundary of $Q_1$ corresponding to $E$, and there is an oriented path $E'$ in the interior of $Q'$ corresponding to $E$, such that if we glue $E_1$ to $E'$, preserving orientation, then we get back the original complex $Q$. The page $S_1$ is a planar neighbourhood of $E_1$ inside $Q_1$. Note that $Q'$ and $Q_1$ are simply connected by van Kampen's theorem. 

If instead $E$ is of Type 2, then we shall unglue without disconnecting $Q$; see Figure~\ref{fig:ungluing}. In this case, if we cut $Q$ along the interior of $E$, then we create three copies of $E$, each attached to $v$ and $w$. We produce a new square complex $Q'$ from this by detaching $S_1$ from $v$, obtaining a new vertex $v'$, and detaching $S_2$ from $w$, obtaining a new vertex $w'$. Note that by the choice of $f_1$ and $g_2$, both edges containing $v'$ are boundary edges, and similarly for $w'$. The three copies of $E$ now form a path $E'$ in the boundary of $Q'$ that has endpoints $v'$ and $w'$ and passes through $v$ and $w$. We refer to $E'$ as \emph{descending from $E$}. Each vertex between $v$ and $w'$ and each vertex between $w$ and $v'$ lies in exactly two squares. The square complex $Q'$ is simply connected, because collapsing the contractible subspaces $E'\subset Q'$ and $E\subset Q$ yields homeomorphic spaces. 

\begin{figure}[ht]
\centering\includegraphics[height=60mm, trim = 0 3mm 0 3mm]{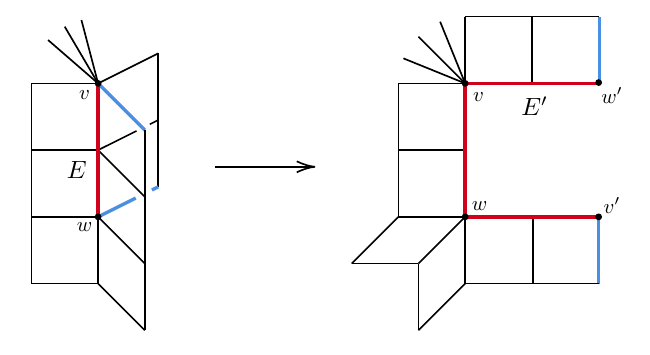}
\caption{Ungluing $Q$ along a Type 2 path $E\subset B$ with endpoints $v$ and $w$.} \label{fig:ungluing}
\end{figure}

\mk

Assign a type to each connected component of $B$. Let $B_j$ be the set of all Type~j connected components of $B$. Let $\cal Q$ be the set of cube complexes obtained by applying the above ungluing operation to all connected components of $B$. If we instead only apply the ungluing operation to just the elements of $B_2$, then we obtain a connected square complex, which we denote by $Q_2$.

Let $Q'\in\cal Q$ be one of the components obtained by completely ungluing $Q$. By construction, no edge of $Q'$ is contained in more than two squares. Thus the link of every vertex embeds as an induced subgraph of a cycle graph, so $Q'$ embeds into some surface. Since $Q'$ is simply connected, it follows that it is planar. Applying \cref{lem:tree_gluing} with $A=B_1$, we obtain a topological embedding of $Q_2$ in $K_{1,3}\times\R$.

Each $E\in B_2$ descends to a boundary component of the element of $\cal Q$ that contains it, which is planar. Moreover, $E$ is at distance at least one from each element of $B_1$. Using planarity, we can therefore arrange for the descendant $E'$ in $Q_2$ of $E$ to lie along the central axis of $K_{1,3}\times\R$, for all $E\in B_2$. Moreover, we can arrange that each such $E'$ has an open neighbourhood that intersects only a single page of $K_{1,3}\times\R$. 

We use the two other pages of $K_{1,3}\times\R$ to re-glue, as shown in Figure~\ref{fig:regluing}. Let $v'$ and $w'$ be the endpoints of $E'$, which descend from vertices $v$ and $w$ of $E$. We also view $v$ and $w$ as vertices of $E'$, as in Figure~\ref{fig:ungluing}. Let $E_1$ be the subpath of $E'$ from $w'$ to $v$, let $E_2$ be the subpath from $v$ to $w$, and let $E_3$ be the subpath from $w$ to $v'$. As noted previously, there are exactly two edges of $Q_2$ containing $v'$, and they are both boundary edges. Moreover, the vertices of $E_1$ between $v$ and $w'$ all lie in exactly two squares. We can therefore fold $E_1$ through one of the two remaining pages to glue to $E_2$ in a way that affects only the square witnessing that $E_1$ is a boundary edge. The same argument applies at $w'$, and we can fold $E_3$ through the third page to glue to $E_2$.

\begin{figure}[ht]
\centering\includegraphics[height=60mm, trim = 0 3mm 0 3mm]{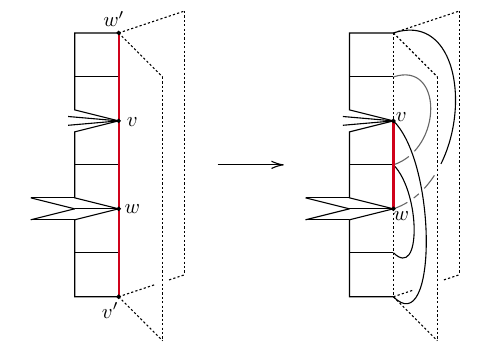}
\caption{Re-gluing a descendant of some $E\in B_2$.} \label{fig:regluing}
\end{figure}

After applying this re-gluing procedure for every Type~2 component $E\subset B$, we obtain an embedding of $Q$ in $K_{1,3}\times\R$.
\end{proof}

To prove \cref{thm:5_in_3-space} below, we shall build on \cref{prop:tripod_times_R} to include edges contained in four squares. When $Q_o$ has maximum out-degree at most three, there will no edges contained in four squares.

\begin{lemma} \label{lem:four_separates}
Let $Q$ be a CAT(0) square complex whose vertices have degree at most five. If $e$ is an edge of $Q$ that lies in four squares, then $e$ disconnects $Q$ into four connected components, and all edges adjacent to $e$ are boundary edges.
\end{lemma}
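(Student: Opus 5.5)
Write $e=vw$ and let $s_1,\dots,s_4$ be the four squares containing $e$. In $s_i$, let $v_i$ be the neighbour of $v$ other than $w$, and let $w_i$ be the neighbour of $w$ other than $v$. The plan is to use the degree bound and the CAT(0) (flag link) condition at $v$ and $w$ to pin down the local structure completely, and then to pass to the hyperplane dual to $e$ for the global separation.

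\emph{Step 1: local structure.} Since the $s_i$ are distinct squares on $e$ and squares in a CAT(0) complex are determined by two adjacent edges, the vertices $v_1,\dots,v_4$ are distinct. Together with $w$ they give five neighbours of $v$, so $v$ has exactly these neighbours. Now suppose an edge $vv_i$ lies in a square other than $s_i$. The other edge of that square at $v$ is some $vx$ with $x\in\{v_j\}_{j\ne i}$, since $x=w$ would make it $s_i$. In the link of $v$, which is a graph because $Q$ is 2--dimensional, the vertices $w,v_i,v_j$ would then be pairwise adjacent. The flag condition would force a 3--cube, contradicting $\dim Q=2$. Hence each $vv_i$ lies only in $s_i$, and so it is a boundary edge. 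The same argument at $w$ shows each $ww_i$ is a boundary edge. This gives the second assertion.

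\emph{Step 2: separation.} Let $h$ be the hyperplane dual to $e$. The link computation in Step 1 shows that near $v$ (and likewise near $w$) the four half-squares $s_i\cap h^{\pm}$ on the $v$ side are glued to each other only along $e$. To get a global statement, I would consider the carrier of $e$, namely the star $K_{1,4}\times[0,1]$ formed by $e\cup s_1\cup\dots\cup s_4$. The claim is that $Q\ssm e$, or more precisely $Q$ minus the open edge together with its endpoints' small neighbourhoods, falls apart into four pieces $C_i\supset s_i\ssm e$.

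Suppose some path in $Q\ssm e$ joins $s_i\ssm e$ to $s_j\ssm e$ with $i\neq j$. Then we can close it up through $e$ to get a loop, which by contractibility of $Q$ bounds a disc diagram. I expect the main obstacle to be turning this into a contradiction cleanly. I would attempt it with hyperplanes. Let $k_i$ be the hyperplane dual to $vv_i$ (equivalently to $ww_i$). Since $vv_i$ and $ww_i$ are boundary edges, $k_i$ meets $s_i$ and then continues only through the squares beyond the edge $v_iw_i$. Each $k_i$ separates $Q$ into two halfspaces, and the $v$-side contains $e$. I would define $C_i$ to be the halfspace of $k_i$ not containing $e$, together with the half of $s_i$ lying on that side.

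The key claim is that the halfspaces $k_i^+$ (the sides away from $e$) are pairwise disjoint. To prove it, suppose $k_i$ and $k_j$ cross or are nested. If they cross, a standard median/quadrant argument puts a square through $vv_i$ and $vv_j$ at $v$, which was excluded in Step 1. Nesting is impossible because each $k_i^+$ is adjacent to $e$. It then remains to check that every vertex of $Q$ other than $v,w$ lies in exactly one $k_i^+$. Any geodesic from $v$ to such a vertex starts with an edge $vv_i$ (or passes through $w$ and then uses $ww_i$), and so it crosses exactly one $k_i$. Therefore $Q\ssm e$ is the disjoint union of the four connected sets $C_i$, each containing $s_i\ssm e$.
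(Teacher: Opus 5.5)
Your proposal is correct. Step~1 is exactly the paper's argument for the boundary-edge assertion: the degree bound forces $v$ to have exactly the five neighbours $w,v_1,\dots,v_4$, and a second square on $vv_i$ would create a triangle $w,v_i,v_j$ in the link of $v$, hence a 3--cube.

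For the separation claim you take a different and more complete route. The paper only writes ``thus $e$ is a cut-edge'' after the degree count and gives no global argument. You actually prove it: you use the hyperplanes $k_i$ dual to $vv_i$, show that their halfspaces $k_i^+$ away from $e$ are pairwise disjoint, and distribute the vertices other than $v,w$ among them using geodesics from $v$. Separation genuinely needs both the local picture from Step~1 and a global input such as hyperplane separation, so your ordering (boundary edges first, then hyperplanes) is the right one. What you gain is rigour; the paper's version is shorter but only asserts the conclusion.

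A few points need tidying.
\begin{itemize}
\item $k_i$ does not ``continue through the squares beyond $v_iw_i$''; the hyperplane that continues past $v_iw_i$ is $h$. Since $vv_i$ and $ww_i$ lie only in $s_i$, the edges dual to $k_i$ are exactly $vv_i$ and $ww_i$, so $k_i$ is just the midline of $s_i$.
\item That observation makes the non-crossing of $k_i$ and $k_j$ immediate, so the quadrant argument is unnecessary.
\item For the nesting cases, it is cleaner to note that $v\in k_i^-\cap k_j^-$, $v_i\in k_i^+\cap k_j^-$ and $v_j\in k_i^-\cap k_j^+$. Hence the only quadrant that can be empty is $k_i^+\cap k_j^+$.
\item $C_i$ should be $k_i^+\cup(s_i\setminus e)$. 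You must add the half of $s_i$ on the side of $e$ (minus $e$), not the half already in $k_i^+$; otherwise $C_i\not\supset s_i\setminus e$ and the $C_i$ do not cover $Q\setminus e$.
\item Passing from the vertex partition to a decomposition of $Q\setminus e$ needs one more line. An edge from $k_i^+$ to $k_j^+$ with $i\neq j$ would have to be dual to both $k_i$ and $k_j$, which is impossible. So every cell avoiding $v,w$ lies in a single $k_i^+$, while the cells meeting $\{v,w\}$ are $e$, the $s_i$ and their edges. The four pieces are then disjoint and closed in $Q\setminus e$, hence also open.
\end{itemize}
The disc-diagram detour can simply be dropped.
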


\begin{proof}
If $v$ is a vertex of $e$, then $v$ has degree five and the edges containing $v$ are precisely $e$ and an edge of each of the squares containing $e$. Thus $e$ is a cut-edge. Let $e'\ne e$ be an edge of $Q$ containing $v$, and suppose that $e'$ lies in a square $S$ not containing $e$. By our description of the edges of $Q$ containing $v$, the other edge of $S$ containing $v$ must be an edge of a square containing $e$. This violates the CAT(0) condition. 
\end{proof}


We can now prove~\cref{thm:5_in_3-space}, which we restate for convenience.



\numberedtheorem{Theorem~\ref{thm:5_in_3-space}}{}{
Let $Q$ be a CAT(0) square complex. If every vertex has degree at most five, then $Q$ can be topologically embedded into $K_{1,4}\times\R$.
Furthermore, if there also exists a vertex $o\in Q$ such that $Q_o$ has maximum out-degree at most three, then $Q$ admits a topological embedding in $K_{1,3}\times\R$.
}

\begin{proof}
Let $A$ denote the set of edges of $Q$ that lie in four squares. If $v$ is a vertex of an edge $e\in A$, then $v$ has degree five and the edges containing $v$ are precisely $e$ and an edge of each of the squares containing $e$. Thus $e$ is a cut-edge. Furthermore, if $e'\ne e$ is an edge of $Q$ containing $v$, then $e'$ lies in a unique square. Indeed, the other edge containing $v$ of a second such square would have to be among the edges described above, which would violate the CAT(0) condition. 

For each edge $e\in A$, separate the complex into two components so that $e$ is contained in the boundary of one of them. Since the edges of $Q$ that meet $e$ all lie in unique squares, $e$ has a planar open neighbourhood in that component. Doing this for each $e\in A$, we obtain a collection $\cal Q$ of CAT(0) square complexes whose vertices have degree at most five and such that no edge of any $Q'\in\cal Q$ is contained in four squares. By \cref{prop:tripod_times_R}, each $Q'\in\cal Q$ admits a topological embedding in $K_{1,3}\times\R$, and we deduce from \cref{lem:tree_gluing} that $Q$ admits a topological embedding in $K_{1,4}\times\R$.

Suppose now that there also exists a vertex $o\in Q$ such that $Q_o$ has maximum out-degree at most three.
By \cref{prop:tripod_times_R}, it is enough to show that no edge of $Q$ lies in four squares.
Suppose for sake of contradiction that there is such an edge $e=uv$, with $u$ closer to $o$ than $v$.
Both $u$ and $v$ must have degree exactly five as they have degree at most five and $uv$ is contained in four squares.
Furthermore, these four squares are in different connected components of $Q\ssm uv$, by \cref{lem:four_separates}.
Therefore, $u$ has in-degree at most one and hence out-degree at least four, a contraction.
\end{proof}

Theorem~\ref{thm:bad_square_complex} now follows from \cref{thm:burbox} and \cref{thm:5_in_3-space}.

\section{Burling-excluded cube complexes in \texorpdfstring{$\R^3$}{R3}}\label{sec:string}

In this short section we prove \cref{thm:cubeR3}.
We will show that the finite induced subgraphs of the crossing graph of a CAT(0) cube complex embedded in $\R^3$ are string graphs.
A \emph{string graph} is the intersection graph of a collection of path-connected subsets of the plane. We will then use the following theorem.


\begin{theorem}[\cite{abrishamibrianskidaviesdumasarikovarzazewskiwalczak:burling}] \label{thm:k4B_chromatic}
For every Burling graph $B$, there exists a positive integer $k$ such that every string graph excluding $B$ and $K_4$ as induced subgraphs is $k$-colourable.
\end{theorem}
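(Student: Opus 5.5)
This result is imported from \cite{abrishamibrianskidaviesdumasarikovarzazewskiwalczak:burling}, and the paper presumably cites it rather than reproving it. If I had to prove it myself, I would argue by contradiction and build $B$ inside any $K_4$-free string graph of huge chromatic number. The plan has three steps. First, reduce to the \emph{grounded} (outerstring) setting. Second, extract a structure that mimics one step of Burling's recursive construction. Third, iterate along the Burling sequence, using $K_4$-freeness to make the copy of $B$ induced.

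\emph{Step 1 (grounding).} Let $G$ be a $K_4$-free string graph of very large chromatic number. Take a vertex $v$ and the BFS layers $L_0,L_1,\dots$ from $v$; some single layer $L_i$ has chromatic number at least half of $\chi(G)$. Contract the union of the curves in $\bigcup_{j<i}L_j$ to a disc. The curves of $L_i$ then become curves attached to the boundary of that disc, so $G[L_i]$ is (essentially) an outerstring graph. This is the standard Rok--Walczak style reduction, and it loses only a bounded factor in $\chi$. I would then apply it once more inside the grounded family to normalise the picture, so that every curve starts on a fixed horizontal line and lies above it.

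\emph{Step 2 (one Burling step).} Burling graphs are generated recursively: each step adds, for a current triangle-free configuration, new ``probe'' vertices, each adjacent to an independent set of ``stable'' vertices from some copy. I would show a lemma of the following form. If a grounded family has chromatic number at least $f(c)$, then it contains a subfamily $\mathcal F$ with $\chi(\mathcal F)\ge c$ together with curves that play the role of probes for prescribed independent subsets of $\mathcal F$, meeting no other curves of $\mathcal F$.
\begin{itemize}
\item The argument would follow the Pawlik et al.\ construction in reverse. Order the grounding points along the line. Split the family by where curves meet the line relative to a ``separating'' curve. Use the Gyárfás-path trick to find a curve whose neighbourhood has large chromatic number.
\item Here $K_4$-freeness enters: the neighbourhood of any curve induces a triangle-free string graph.
\item Triangle-free neighbourhoods let me recurse one level down. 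That is, I apply the same lemma to the triangle-free case, which handles clique number $2$ first. This gives a double induction, on clique number (at most $3$) and on the Burling index.
\end{itemize}

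\emph{Step 3 (iteration and inducedness).} Iterating Step 2 once for each level of $B$ yields a copy of $B$, provided every probe added is non-adjacent to everything outside its intended independent set. I expect this to be the main obstacle. Planar curves force many unwanted intersections, and the $K_4$-free (rather than triangle-free) hypothesis allows triangles that can wreck the induced structure. What I would try first is to keep, at each level, a ``clean'' reservoir of curves of large chromatic number lying entirely in one face cut out by the previously chosen curves. New probes are then chosen inside that face, so they can only meet curves of the reservoir. Any unwanted adjacency that creates a triangle with two stable vertices is excluded by passing to a subfamily, using that a triangle plus the probe would give a $K_4$. Any remaining unwanted adjacency is killed by a Ramsey/pigeonhole cleaning that costs only a bounded-factor loss in chromatic number per level. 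Since $B$ has finitely many levels, the total loss is bounded, which gives the constant $k$.
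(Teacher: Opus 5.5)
You are right that the paper does not prove this statement. It is imported from \cite{abrishamibrianskidaviesdumasarikovarzazewskiwalczak:burling} and used only as a black box in the proof of Theorem~\ref{thm:cubeR3}. Your sketch, however, breaks already at Step~1. The BFS-layer trick tells you only that every curve of $L_i$ meets the connected set $U=\bigcup_{j<i}\bigcup L_j$ and avoids the curves of $L_j$ for $j<i-1$. A curve of $L_i$ can cross $U$ arbitrarily often and pass through many complementary regions of $U$, and you cannot cut it at $U$ to make it grounded without deleting edges of $G[L_i]$.

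In fact no reduction that lands in the grounded setting while keeping a constant fraction of the chromatic number can exist. Rok and Walczak proved that outerstring graphs are $\chi$-bounded, so a $K_4$-free grounded family has chromatic number at most some absolute constant $f(3)$. If Step~1 were correct, every $K_4$-free string graph would satisfy $\chi\le 2f(3)$. That is false: Burling graphs are triangle-free intersection graphs of frames, hence string graphs, and they have unbounded chromatic number \cite{burling:oncolouring,pawlikkozikkrawczyklasonmicektrotterwalczak:triangle:geometric}. It would also make the statement trivial, with no role for $B$. Consequently the hypothesis of your Step~2 lemma, a grounded family of chromatic number at least $f(c)$, cannot be met once $c$ is large, so Steps~2 and~3 have nothing to work with. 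Any correct argument has to work in a setting that still carries triangle-free families of huge chromatic number. One example is curves that all meet a fixed connected set but may cross it unboundedly often, which is what the BFS trick actually gives.

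Separately from Step~1, Steps~2 and~3 do not contain the idea the theorem needs. Passing to a neighbourhood is a valid way to reduce the $K_4$-free case to the triangle-free case \emph{when} some curve has a neighbourhood of huge chromatic number. But the Gy{\'a}rf{\'a}s-path argument does not produce such a curve. When all neighbourhoods have bounded chromatic number, what it gives is a long induced path. That bounded-local-$\chi$ case is the entire difficulty. In the triangle-free base case of your double induction every neighbourhood is a stable set, so you are always in it. For this case you only restate the conclusion as a lemma: a large-$\chi$ subfamily together with probes meeting prescribed stable sets and nothing else.

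The same gap appears in Step~3. Pigeonholing the reservoir by its adjacency pattern to the boundedly many curves chosen so far does cost only a bounded factor in $\chi$, but it merely homogenises. It cannot create a curve whose neighbourhood in the current partial copy of $B$ is exactly a prescribed stable set. Forcing such probes to exist, with no unwanted intersections, is the geometric core of the result, and your proposal does not supply it.
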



\numberedtheorem{Theorem~\ref{thm:cubeR3}}{}{
For every Burling graph $B$ there exists a positive integer $k$ such that the following holds. If $Q$ is a CAT(0) cube complex that topologically embeds into $\mathbb{R}^3$ and whose crossing graph excludes $B$ as an induced subgraph, then $Q$ isometrically embeds into a product of $k$ trees.
}

\begin{proof}
Let $Q$ be a CAT(0) cube complex with a topological embedding $f:Q\to\R^3$. We have $\dim Q\le 3$, so the crossing graph $\Cross(Q)$ has no induced $K_4$ subgraph. Given a finite induced subgraph $F$ of $\Cross(Q)$, let $A\subset Q$ be a finite subset realising all consistent orientations of the hyperplanes corresponding to the elements of $F$. The convex subcomplex $Q'=\hull A\subset Q$ is finite, and $F$ is an induced subgraph of $\Cross(Q')$.

The embedding of $Q$ into $\R^3$ restricts to an embedding of $Q'$. Since $Q'$ is compact and contractible, there is some $\eps>0$ such that the $\eps$--neighbourhood $N$ of $f(Q')$ in $\R^3$ is homeomorphic to a 3--ball. By decreasing $\eps$, we can assume that if $h$ and $h'$ are two disjoint hyperplanes of $Q'$, then $\eps<\frac12\dist(f(h),f(h'))$. 

Take a small thickening of $f(h)$ for each hyperplane $h$ of $Q'$ to obtain a collection of topological circles $C_h$ in $\partial N\cong S^2$. We claim that the intersection graph of these circles is $\Cross(Q')$. Firstly, the choice of $\eps$ ensures that if $h$ and $h'$ do not cross then $C_h$ and $C_{h'}$ do not cross. Suppose instead that $h$ and $h'$ cross. We can view $h\cap h'$ as a hyperplane of the codimension-1 CAT(0) cube complex $h$, and hence it is itself a CAT(0) cube complex of dimension at most one. That is, $h\cap h'$ is a tree. By considering a neighbourhood of a leaf of this tree, we see that $C_h$ and $C_{h'}$ must cross. 

We have shown that $\Cross(Q')$ is a string graph. Since $\Cross(Q')\subset \Cross(Q)$ has no $K_4$ or $B$ induced subgraphs, the chromatic number of $\Cross(Q)$, and hence of $F$, is bounded by the number $k$ from \cref{thm:k4B_chromatic}. Since the chromatic number of $\Cross(Q)$ is the supremum of the chromatic numbers of its finite induced subgraphs \cite{debruijnerdos:colour}, this shows that $\Cross(Q)$ is $k$--colourable, and hence $Q$ isometrically embeds into a product of $k$ trees.
\end{proof}




\section{A Burling-free construction}\label{sec:noBur}

We shall use a sparse version of the Hales--Jewett theorem \cite{halesjewett:regularity} due to Prömel--Voigt~\cite{promelvoigt:sparse:grahamrothschild}.
We remark that a non-induced version would also be sufficient for our purposes and can easily be deduced from results of R{\"o}dl \cite{rodl:onramsey}. The statement uses the following terminology.

\begin{definition}[Cycle, combinatorial line]
A \emph{cycle} of length $\ell\geq 2$ on a set $X$ is a tuple $(T_1,\ldots,T_\ell)$ of distinct subsets of $X$ such that there exist distinct elements $x_1,\ldots,x_\ell\in X$ with $x_i\in T_i\cap T_{i+1}$ for $i\in[\ell-1]$ and $x_\ell\in T_\ell\cap T_1$.

For $n\in\mathbb{N}$ and a finite set $L$, a subset $C$ of the $n$-dimensional $L$-cube $L^n$ is called a \emph{combinatorial line} if there exists a non-empty set of indices $I=\{i_1,\ldots,i_k\}\subseteq[n]$ with $i_1 < \cdots < i_k$ and a choice of $x^*_i\in L$ for each $i\in L\setminus I$ such that
\[C=\bigl\{(x_1,\ldots,x_n)\in L^n \,:\, x_{i_1}=\cdots=x_{i_k}\text{ and }x_i=x^*_i\text{ for }i\notin I\bigr\}.\]
The indices in $I$ are called the \emph{active coordinates} of $C$. For $x\in L$, we let $C(x)$ be the point of $C$ with all active coordinates equal to $x$.
\end{definition}

\begin{theorem}[\cite{promelvoigt:sparse:grahamrothschild}]\label{Hales}
For any $g,k\in\mathbb{N}$ and finite set $L$ with $|L|\geq 3$, there exists an $n\in\mathbb{N}$ and a set $H\subseteq L^n$ such that every $k$-colouring of $H$ contains a monochromatic combinatorial line of $L^n$ and no tuple of fewer than $g$ combinatorial lines of $L^n$ contained in $H$ forms a cycle.
\end{theorem}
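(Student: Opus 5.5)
The statement is quoted from Prömel--Voigt, so within the paper one would simply cite it. If I had to prove it, I would use the Nešetřil--Rödl \emph{partite construction}, with the ordinary Hales--Jewett theorem as the base step and amalgamation as the engine. The idea is to view combinatorial lines as edges of an $|L|$--uniform hypergraph on the cube. The aim is then a subset $H$ whose induced line-hypergraph has chromatic number greater than $k$ but no short Berge cycles. Here ``induced'' means that we count every line of $L^n$ lying in $H$, not only the lines we intended to place there.

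\emph{Step 1 (base).} By Hales--Jewett, fix $m$ such that every $k$--colouring of $L^m$ has a monochromatic line. Enumerate the lines of $L^m$ as $C_1,\dots,C_r$.

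\emph{Step 2 (partite picture).} Work with $|L|^m$--partite configurations. A configuration $P$ is a subset of some cube $L^N$ together with a projection $\pi\colon P\to L^m$. The projection should send each line of $L^N$ inside $P$ to a line of $L^m$ whose active coordinates are the images of blocks of active coordinates. Concretely, $L^N=(L^{a_1})\times\cdots\times(L^{a_m})$ and $\pi$ is a coordinatewise ``diagonal'' map. Start with $P_0$ the disjoint union of the lines $C_1,\dots,C_r$ placed in pairwise far-apart subcubes. Then $P_0$ has no cycles at all, and every line in it projects to some $C_j$.

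\emph{Step 3 (amalgamation).} For $j=1,\dots,r$, build $P_j$ from $P_{j-1}$ as follows. Let $\pi^{-1}(C_j)\cap P_{j-1}$ be the $C_j$--fibre, which is a set of points lying in $|L|$ parts. By induction on $g$, we may assume the statement for girth $g-1$, or we may use a large-girth partite hypergraph obtained probabilistically. Either way, apply this to the fibre to get a sparse family $\mathcal{F}$ of copies of it, glued along single points so that the copies form a high-girth $|L|$--uniform partite hypergraph with the property that every $k'$--colouring has a monochromatic copy. Here $k'$ is large, $k'=k^{|P_{j-1}|}$. Realise the copies in a larger cube by \emph{concatenating coordinates*}: each copy gets its own new block of coordinates, substituted into the active block corresponding to $C_j$. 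By pigeonhole over $\mathcal F$, any $k$--colouring of $P_j$ contains a copy of $P_{j-1}$ on which the colouring is constant on each fibre over $C_j$. Iterating through $j=r,\dots,1$ yields a copy of $P_0$ whose colour depends only on the projection to $L^m$. Step 1 then gives a monochromatic line. Set $H=P_r$.

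\emph{Step 4 (girth).} Every gluing is along a tree-like pattern: copies meet pairwise in at most one point, arranged in a hypergraph of girth at least $g$. Hence a short cycle of lines in $P_j$ must either lie inside a single copy of $P_{j-1}$ (excluded by induction) or traverse the gluing hypergraph (excluded by its girth). I expect the main obstacle to be here. The coordinate concatenation can create \emph{unintended} combinatorial lines of $L^N$ inside $H$, for example lines mixing coordinates from two different copies. Such lines might form short cycles or break the partite structure. I would first try to rule them out by making the embedding coordinates ``generic''. Give distinct copies disjoint sets of free coordinates, with fixed values chosen from a pattern that forbids any non-constant block from being simultaneously active across two copies. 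Then check that any line inside $H$ has all active coordinates within a single copy's block, and therefore projects to a genuine $C_j$. If this fails, I would fall back on the non-induced version via Rödl's sparse Ramsey theorem, as the paper notes suffices.
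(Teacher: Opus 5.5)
The paper does not prove this theorem; it quotes it from Pr\"omel--Voigt, so your proposal has to stand on its own. Its architecture is the standard route to sparse Ramsey theorems of this kind: a Ne\v{s}et\v{r}il--R\"odl partite construction over a Hales--Jewett cube $L^m$, with sparsity coming from amalgamation along high-girth patterns. However, the two steps that carry the weight are not supplied.

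\textbf{The partite lemma (Step~3).} The backwards iteration needs the following: every $k$-colouring of $P_j$ admits a projection-preserving copy of $P_{j-1}$ on which the colour is constant on each part $\pi^{-1}(b)\cap P_{j-1}$, $b\in C_j$. Different parts may receive different colours. Step~3 does not deliver this. Read literally, a ``monochromatic copy'' in a partite structure cannot be forced: colour one part differently from the rest. Suppose instead you colour copies of $P_{j-1}$ by their colour pattern, using $k'=k^{|P_{j-1}|}$ colours. Then a monochromatic edge of the gluing hypergraph only yields several copies with the same pattern, which says nothing about homogeneity inside one copy. It helps only if the gluings identify \emph{different} positions of the same part in different copies, along a pattern connecting all positions of each part, so that equal patterns force the needed equalities inside a single copy. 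You do not specify this. It also dictates the uniformity of the gluing hypergraph, which is not $|L|$. That hypergraph must be non-partite, with girth $\ge g$ and chromatic number $>k'$. It comes from Erd\H{o}s--Hajnal-type theorems, not from the girth-$(g-1)$ case of the statement, which is about lines in a cube rather than gluing patterns.

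\textbf{Realising the amalgam in a cube (Step~4).} This is the more serious gap, and it is precisely the content of the theorem as stated, since the cycle condition concerns \emph{every} combinatorial line of $L^n$ contained in $H$. You need line-preserving (parameter-word) embeddings $f_t$ of the copies into one cube such that:
\begin{enumerate}
\item[(i)] two copies meet exactly in the prescribed glued points;
\item[(ii)] every line of the big cube lying in $P_j$ is $f_t(\ell)$ for some line $\ell\subseteq P_{j-1}$.
\end{enumerate}
Neither is automatic. With a private block per copy, each $f_t$ writes constants into all other blocks. A gluing $f_t(p)=f_{t'}(p')$ then forces the constants of $f_t$ and $f_{t'}$ to coincide on every third block. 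These constraints propagate along the gluing pattern and can create or forbid coincidences. Nothing in your plan prevents $|L|$ points from $|L|$ different copies from forming a line whose active coordinates lie in blocks where those copies carry only constants. Excluding such lines is where $|L|\ge 3$ must be used. Already for $P_0$ you need tag coordinates, chosen for instance over two letters so that no tag coordinate can be active in a line inside $P_0$. You leave all of this as something you would ``try''. Your fallback via R\"odl only gives the non-induced version, in which short cycles are excluded within a chosen family of lines rather than among all lines in $H$. That is weaker than the statement; the paper only remarks that it would suffice for its application. As it stands, the proposal does not prove the theorem.
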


Let $L(T)$ denote the set of leaves of a tree $T$ or rooted tree $(T,r)$.

\begin{definition}[Limbs]
Let $(T,r)$ be a rooted tree. A path $P\subset T$ joining some $u \in L(T)$ to some ancestor $v$ of $u$ is called a \emph{limb} of $(T, r)$. We call
$u$ the \emph{leaf} of $P$ and $v$ the \emph{start} of $P$, and write $s(P)=v$.
We say that $Q=P\times I$  is a \emph{thick limb} of $(T,r)$ if $P$ is a limb of $(T,r)$ and $I\subset\R$ is a closed interval.
\end{definition}

Note that two thick limbs $P_1\times I_1$ and $P_2\times I_2$ can be disjoint even when $P_1$ and $P_2$ are not. We shall be interested in collections of thick limbs where we have control on how any two can intersect. Two sets are said to \emph{overlap} if they have nonempty intersection and neither is a subset of the other.

\begin{definition}[Restricted]
We say that a thick limb $P_x \times I_x$ \emph{grazes} a thick limb $P_y \times I_y$ if $I_x \subset I_{y}$, $s(P_x) \ne s(P_y)$, and $P_x \cap P_{y}=\{s(P_y)\}$.
A finite collection $\mathcal{Q}=\{P_x \times I_x : x \in X \}$ of thick limbs is said to be \emph{restricted} if:
\begin{itemize}
    \item no $P_x \times I_x$ is contained in any $P_y \times I_y$;
    \item the intervals $\{I_x :x \in X\}$ are pairwise non-overlapping and their endpoints are all distinct;
    \item if $P_x \times I_x$ and $P_{y} \times I_{y}$ intersect then one grazes the other;
    \item if $I_x \subset I_{y}$ then $s(P_x)$ is not a descendant of $s(P_y)$.
\end{itemize}
\end{definition}

See Figure~\ref{fig:cycle} for an illustration of a restricted collection of thick limbs whose overlap graph is an $8$-cycle.
Let us remark that by the definition of grazing, two thick limbs in a restricted collection of thick limbs overlap if and only if they intersect, and therefore the overlap graph and the intersection graph of a restricted collection of thick limbs are equal.

We are now ready to prove the following, which is a step towards proving Theorem \ref{thm:bad_square_complex2} and also implies and strengthens Theorem \ref{thm:chordalinterval}.
We remark that it actually implies the strengthening of Theorem \ref{thm:chordalinterval} for intersections of trivially perfect graphs and rooted directed path graphs (see \cite{golumbic1978trivially,monma1986intersection}), which are subclasses of interval and chordal graphs respectively.
A rooted tree $(T,r)$ is \emph{subcubic} if the tree $T$ has maximum degree at most $3$ and $r$ has degree at most $2$.

\begin{theorem}\label{thm:treerestricted}
    For any $g,k\in\mathbb{N}$, there exists a subcubic rooted tree $(T,r)$ and a restricted collection of thick limbs $\mathcal{Q}$ such that the overlap graph of $\mathcal{Q}$ has chromatic number at least $k$ and girth at least $g$.
\end{theorem}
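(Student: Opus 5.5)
We argue by induction on $k$, adapting Tutte's construction with Theorem~\ref{Hales} in place of the pigeonhole principle. The inductive statement is: for every $g$ and $k$ there is a subcubic rooted tree $(T_k,r_k)$ and a restricted collection $\mathcal{Q}_k$ of thick limbs whose overlap graph $G_k$ has girth at least $g$ and chromatic number at least $k$. We also keep track of one extra invariant that makes gluing possible: every limb starts strictly below the root, and all intervals lie in a fixed interval $J_k$. For $k\le 2$ a single limb, or two grazing limbs, suffices.

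For the inductive step, let $L$ be a set indexing a family of disjoint copies of the previous construction, say $|L|=m\ge3$, with $m$ chosen below. Apply Theorem~\ref{Hales} with girth $g$ and $k-1$ colours to obtain $n$ and $H\subseteq L^n$. For each point $h\in H$ we take a \emph{block}, namely a disjoint copy of $(T_{k-1},\mathcal{Q}_{k-1})$. The blocks for different points are placed at distinct leaves of a large subcubic tree, for instance a binary tree whose leaves are indexed by $H$ and below each of which a copy of $T_{k-1}$ hangs. Their intervals are translated into pairwise disjoint subintervals $J_h$ of a line, arranged so that the coordinates of $h$ control the relative order.

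For each combinatorial line $C\subseteq H$ we add one new thick limb $P_C\times I_C$. This limb must graze exactly one chosen vertex in the copy for each point $C(x)$, and must not meet anything else. Grazing requires $I_C$ to contain the intervals of those grazed vertices, so $I_C$ must be a long interval. Non-overlap with the intervals of the other blocks then forces nesting, and this is where the arrangement of the intervals by coordinates is used. Meanwhile $P_C$ must pass through the start vertices of the grazed limbs while meeting each of those limbs only at its start; this is the reason for keeping starts strictly below the root and leaving spare branches.

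The chromatic bound follows the usual Tutte pattern. In a $(k-1)$-colouring, each block contains a vertex of every colour, since it has chromatic number at least $k-1$. We choose the grazed vertex in block $C(x)$ to be the vertex of colour $x$, where colours are identified with $L$. Because this choice depends on the colouring, we follow Tutte and include one new limb for every transversal choice; equivalently, we colour $h\in H$ by a suitable colour-pattern and use a monochromatic line. The line limb then sees all colours and receives none. For girth, a short cycle in the new graph either lies inside a block or projects to a cycle of fewer than $g$ lines in $H$, which Theorem~\ref{Hales} excludes. The new limbs form an independent set and each meets a block at most once along a given line.

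The main obstacle is the geometry. We must realise all the line limbs simultaneously as paths in a \emph{subcubic} tree, with intervals that are pairwise non-overlapping, satisfying the no-descendant condition and intersecting only the intended vertices. I would first try an ordering of $H$ by lexicographic or colex order, chosen so that the blocks of any combinatorial line have nested-compatible intervals. I would then route each $P_C$ along a fresh branch that is added by subdividing edges above the relevant starts, which keeps the degrees at most three.
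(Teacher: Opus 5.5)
There is a genuine gap. The geometric realisation is the real content of the statement, and you leave it open; the layout you do commit to cannot work, however the intervals are arranged. Your blocks hang below distinct leaves of a binary tree, and their limbs start strictly below the block roots. So each block's limbs stay inside its own copy of $T_{k-1}$, and distinct copies are incomparable in the ancestor order. A line limb $P_C\times I_C$ that meets a block limb $P_v\times I_v$ must graze it or be grazed by it.
\begin{itemize}
\item If $P_C\times I_C$ grazes $P_v\times I_v$, then $s(P_v)\in P_C$. But a limb is a vertical path, so it meets at most one of the incomparable copies.
\item If $P_v\times I_v$ grazes $P_C\times I_C$, then $s(P_C)\in P_v$. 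But limbs of different blocks are vertex-disjoint.
\end{itemize}
Hence every line limb is adjacent to vertices of at most two blocks, not to all $|L|\ge 3$ blocks on its line. Your text in fact mixes the two grazing directions: ``$I_C$ contains the intervals of the grazed vertices'' and ``$P_C$ passes through their starts, meeting them only there'' cannot both hold. Extra branches do not help. Letting block limbs escape their blocks upward moves their starts, which destroys the grazing relations inside the block.

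The combinatorial skeleton is also not coherent as written.
\begin{itemize}
\item ``The vertex of colour $x$'' depends on the colouring, so it cannot define the construction.
\item Adding a limb for every transversal creates $4$--cycles: take two transversals of one line that agree in two blocks.
\item The colour-pattern repair is not equivalent to these. It needs $L$ to index the vertices of $G_{k-1}$, and Hales--Jewett with $(k-1)^{|\mathcal{Q}_{k-1}|}$ colours rather than $k-1$.
\end{itemize}

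The missing idea is to run Tutte's construction in its original direction, with copies indexed by lines and new vertices by points, and to extend limbs only \emph{downward}, so that their starts never move.
\begin{itemize}
\item The paper first normalises the previous collection so that $L=L(T)$ and $P_\ell$ is the only limb containing the leaf $\ell$.
\item It builds $T^*$ from copies $T_w$, for $w\in\bigcup_{i<n}L^i$, gluing the root of $T_w$ to the leaf $e(w)$ of $T_{w^-}$. The leaves of $T^*$ are then indexed by $L^n$.
\item Each point $w$ becomes the one-vertex limb $\{\ell^*_w\}\times I_w$, with $I_w\supseteq[0,1]$.
\item Each line $C\subseteq H$ gets a copy $\mathcal{Q}_C$ of $\mathcal{Q}$ placed in $T_{w_C}$, where $w_C$ is the prefix before the first active coordinate. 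Its intervals lie in a private $I_C\subseteq[0,1]$, and each $P_\ell$ is prolonged from the $\ell$-leaf of $T_{w_C}$ down to $\ell^*_{C(\ell)}$.
\end{itemize}
This prolongation is possible precisely because the first active coordinate of $C(\ell)$ equals $\ell$. Since no start changes, $\mathcal{Q}_C$ stays restricted, and its $\ell$-th limb meets nothing outside $\mathcal{Q}_C$ except the point limb at $\ell^*_{C(\ell)}$, which it grazes. Hales--Jewett with $k$ colours, applied only to the point limbs, then gives the standard Tutte argument directly. The sparse version of the theorem gives the girth bound.
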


\begin{proof}
    It is straightforward to construct restricted collections of thick limbs whose overlap graph is an arbitrarily long even cycle; see Figure~\ref{fig:cycle}. This establishes the theorem for $k\le 2$ and all $g$, so we proceed inductively on $k$. 
    
    \begin{figure}[ht]
    \centering \includegraphics[height=4.5cm, trim = 0 3mm 0 3mm]{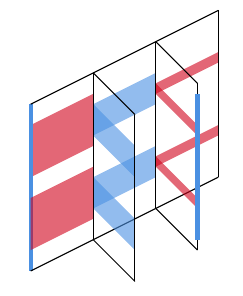}
    \caption{A restricted collection of thick limbs whose overlap graph is an 8--cycle, alternating red and blue.} \label{fig:cycle}
    \end{figure}
    
    Let $(T,r)$ be a subcubic rooted tree and $\mathcal{Q}=\{P_\ell \times I_\ell : \ell \in L \}$ be as in the theorem for $g$ and $k$. We further choose it so that $|L|\ge 3$ (this is just a technicality to allow for the use of Theorem \ref{Hales}).
    By possibly modifying the rooted tree and extending some limbs, we may further choose $(T,r)$ and $\mathcal{Q}=\{P_\ell \times I_\ell : \ell \in L \}$ so that each $P_\ell$ has positive length, $L=L(T)$ and that for each $\ell\in L$, we have that $\ell\in P_\ell$ and $\ell\not\in P_{\ell'}$ for all $\ell' \in L\backslash \{\ell\}$.

    By Theorem \ref{Hales}, there exists some positive integer $n$ and some $H\subseteq L^n$ such that every $k$-colouring of $H$ contains a monochromatic combinatorial line of $L^n$ and no tuple of fewer than $g$ combinatorial lines of $L^n$ contained in $H$ forms a cycle.
    Let $\mathcal{C}$ be the set of combinatorial lines of $L^n$ that are contained in $H$.

    We begin by constructing a new subcubic rooted tree $(T^*,r^*)$.
    For each $w\in \bigcup_{i=0}^{n-1} L^i$, let $(T_w,r_w)$ be a copy of $(T,r)$.
    For each $w\in \bigcup_{i=1}^{n-1} L^i$, let $e(w)\in L$ be the last coordinate and let $w^-$ be obtained from $w$ by removing the last coordinate.
    Now, for each $w\in \bigcup_{i=1}^{n-1} L^i$, we identify the root $r_w$ of $T_w$ with the leaf of $T_{w^-}$ corresponding to $e(w)$. We let $T^*$ be the resulting tree and let $r^*=r_\emptyset$.

    Next we describe a collection $\cal Q^*$ of thick limbs, which will come in two types. The first type are simply large vertical lines over the leaves of $T^*$. For each $w\in L^n$, we let $\ell^*_w$ be the leaf of $T_{w^-}$ (and thus of $T^*)$ corresponding to $e(w)$.
    Then $L(T^*)=\{\ell^*_w : w\in L^n\}$.
    For each $w\in L^n$, choose an interval $I_w$ with $[0,1] \subseteq I_w$ so that the intervals $\{I_w : w \in L^n\}$ are non-overlapping and their endpoints are distinct.
    For each $w\in L^n$, let
    $P_w=\ell^*_w$ be a single-vertex path. We obtain thick limbs $P_w\times I_w$.

    The second type of thick limb is less degenerate. We refer to Figure~\ref{fig:limbs} for a schematic. For each $C\in \mathcal{C}$, let $I_C$ be some subinterval of $[0,1]$, chosen so that each pair of such intervals are disjoint.
    For each $C\in \mathcal{C}$, let $\{I_{\ell_C}: \ell\in L\}$ be a homothetic copy of the intervals $\{I_\ell : \ell \in L\}$, scaled and translated so that $I_{\ell_C} \subseteq I_C$ for each $\ell \in L_C$.
    For each $C\in \mathcal{C}$, let $w_C$ be obtained from $C$ by only taking the coordinates before the first active coordinate.
    For each $\ell\in L$, let $P_{\ell_C}$ be the limb of $T^*$ with leaf $\ell^*_{C(\ell)}$ and start being the vertex of $T_{w_C}$ corresponding to the start of $P_\ell$.
    Note that $P_{\ell_C}$ contains the leaf of $T_{w_C}$ corresponding to $\ell$.
    For each $C\in \cal C$, let $\mathcal{Q}_C = \{P_{\ell_C} \times I_{\ell_C} : \ell \in L\}$.
    
    \begin{figure}[ht]
    \centering \includegraphics[height=5cm, trim = 0 3mm 0 3mm]{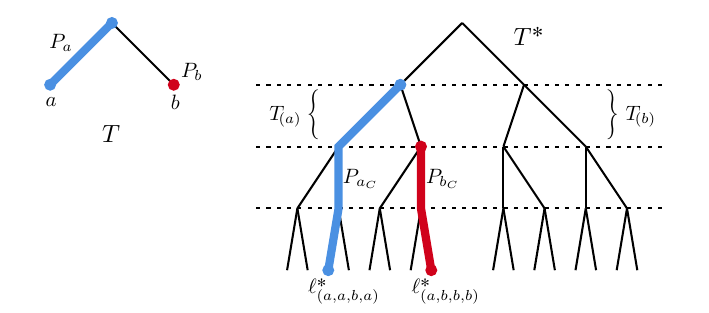}
    \begin{center} \makebox[0pt]{\begin{minipage}{1.1\textwidth}
    \caption{Simplified example of the construction of the limbs $P_{\ell_C}$ for a combinatorial line $C\in\cal C$ and leaves $\ell\in L$. Left: $T$, with two leaves $L=\{a,b\}$ and two limbs, $P_a$ in blue and $P_b$ in red. Right: $T^*$, with $n=4$. For $C=(a,*,b,*)$ we have $w_C=(a)$, so $T_{w_C}$ is the left copy of $T$ on the second level of $T^*$. Thus $P_{a_C}$, in blue, starts in $T_{(a)}$ and has leaf $\ell^*_{C(a)}=\ell^*_{(a,a,b,a)}$, whereas $P_{b_C}$ starts in $T_{(a)}$ and has leaf $\ell^*_{C(b)}=\ell^*_{(a,b,b,b)}$. } \label{fig:limbs}
    \end{minipage}}\end{center}
    \end{figure}

    Set $\mathcal{Q}^*= \{P_w \times I_w \,:\, w \in L^n\} \cup \bigcup_{C\in \mathcal{C}} \mathcal{Q}_C$.
    We shall show that $\mathcal{Q}^*$ is the desired restricted collection of thick limbs.  


    By the inductive hypothesis, for each $C\in \mathcal{C}$, we have that the overlap graph of $\mathcal{Q}_C$ has chromatic number at least $k-1$, girth at least $g$, and that $\mathcal{Q}_C$ is a restricted collection of thick limbs of $(T^*,r^*)$. Indeed, the overlaps take place in $T_{w_C}\times\R$.
    For distinct $C,C'\in \mathcal{C}$, the thick limbs of $\mathcal{Q}_C$ are disjoint from those of $\mathcal{Q}_{C'}$ since $I_C$ and $I_{C'}$ are disjoint.
    It is easy to see that $\cal Q^*$ is a restricted collection of thick limbs, because $I_C\subseteq [0,1]$ for all $C \in \mathcal{C}$, and for all $w\in L^n$ we have $[0,1]\subseteq I_w$ and $P_w$ is just a leaf of $T^*$.
    It remains to show that the overlap graph of $\mathcal{Q}^*$ has chromatic number at least $k+1$ and girth at least $g$.

    It is clear that the thick limbs in $\{P_w \times I_w : w\in L^n\}$ are disjoint.
    Also, as discussed, for distinct $C,C'\in \mathcal{C}$, we have that the members of $\mathcal{Q}_C$ are disjoint from those of $\mathcal{Q}_{C'}$. Hence, for each $C\in \mathcal{C}$, $\ell\in L$, the only element of $\mathcal{Q}^*\backslash \mathcal{Q}_C$ that $P_{\ell_C} \times I_{\ell_C}$ can intersect is $P_{C(\ell)}\times I_{C(\ell)}$, and the intersection is indeed nonempty because $I_{\ell_C}\subset[0,1]\subset I_{C(\ell)}$.
    In particular, they overlap since $P_{\ell_C}$ has positive length.
    
    Consider some cycle $F$ of the overlap graph of $\mathcal{Q}^*$ that is not contained in some $\mathcal{Q}_C$.
    As no tuple of fewer than $g$ elements of $\mathcal{C}$ form a cycle, it follows that $F$ has length at least $2g$, since it must go in and out of at least $g$ of the collections $\mathcal{Q}_C$.
    Otherwise, if $F$ is contained in some $\mathcal{Q}_C$, then by the inductive hypothesis, $F$ has length at least $g$.
    Therefore, the overlap graph of $\mathcal{Q}^*$ has girth at least $g$.

    Finally, suppose for sake of contradiction that there is some $k$-colouring of the overlap graph of $\mathcal{Q}^*$.
    Then, by Theorem \ref{Hales}, there is some $C\in \mathcal{C}$ such that $\{P_{C(\ell)}\times I_{C(\ell)} : \ell \in L\}$ is monochromatic.
    For each $\ell\in L$, the thick limb $P_{\ell_C}\times I_{\ell_C}$ overlaps with $P_{C(\ell)}\times I_{C(\ell)}$.
    In particular, no element of $\mathcal{Q}_C$ receives the same colour that $\{P_{C(\ell)}\times I_{C(\ell)}\}$ received.
    But then $\mathcal{Q}_C$ is $(k-1)$-coloured, contradicting the inductive hypothesis that the overlap graph of $\mathcal{Q}$ has chromatic number at least $k$.
    Hence the overlap graph of $\mathcal{Q}^*$ has chromatic number at least $k+1$, as desired.
\end{proof}



As in \cref{sec:R3}, we shall use a collection of thick limbs to define a dual CAT(0) cube complex, and we need additional structure in order to control its maximum degree.

Let $(T,r)$ be a rooted tree. We write $T^\times=V(T)\ssm\{r\}$ for the set of non-root vertices. Given $v\in T$, let $T_v$ denote the subtree consisting of $v$ and all its descendants.

\begin{definition}[Constrained]
We say that $W=F\times I$ is a \emph{thick subtree} of a subcubic rooted tree $(T,r)$ if $F$ is a subtree of $T$ and $I\subset\R$ is a closed interval.
A finite collection $\mathcal{W}=\{W_x=F_x\times I_x \,:\, x\in X\}$ of thick subtrees of $(T,r)$ is \emph{constrained} if all of the following hold. See Figure~\ref{fig:constrained} for an illustration.
\begin{itemize}
    \item No three elements of $\mathcal{W}$ overlap pairwise.
    \item $T^\times \subset X$ and for each $v\in T^\times$ we have $W_v = T_v\times \R$.
    \item The intervals $\{I_x \,:\, x\in X \ssm T^\times \}$ are pairwise non-overlapping and their endpoints are all finite and distinct.
    \item For each vertex $v \in T$, if three elements $F_1\times I_1$, $F_2\times I_2$, and $F_3 \times I_3$ of $\mathcal{W}$ have $v\in F_1\cap F_2\cap F_3$ and the intervals $I_1,I_2,I_3$ are disjoint, then there exists some $F\times I \in \mathcal{W}$ such that $v\in F$ and $I$ contains exactly two of $I_1, I_2, I_3$. 
\end{itemize}
\end{definition}

\begin{figure}[ht]
\centering \includegraphics[height=3.7cm, trim = 0 3mm 0 3mm]{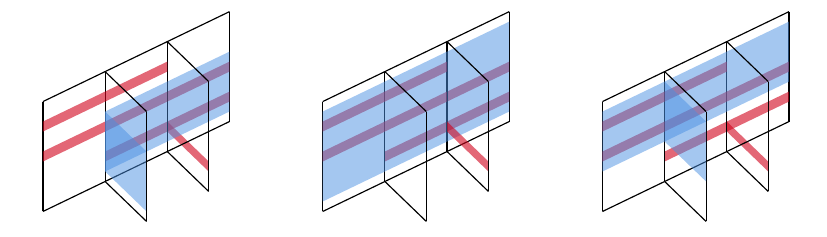}
\begin{center} \makebox[0pt]{\begin{minipage}{1.1\textwidth}
\caption{Left: the final condition in the definition of constrained. When the thick subtrees from the second condition are added, this example fails the first condition. Middle: the blue thick subtree contains exactly two of the red thick subtrees, but fails the final condition of being constrained. Right: a correct example.} \label{fig:constrained}
\end{minipage}}\end{center}
\end{figure}

Next we show that every restricted collection of thick limbs can be extended to a constrained collection of thick subtrees without increasing the chromatic number of the overlap graph much.

\begin{lemma}\label{lem:treeconstrained}
    Let $(T,r)$ be a subcubic rooted tree and let $\mathcal{Q}=\{P_x \times I_x : x\in X\}$ be a restricted collection of thick limbs.
    There exists a constrained collection of thick subtrees $\mathcal{W}$ containing $\mathcal{Q}$ such that the overlap graph of $\mathcal{W}\backslash \mathcal{Q}$ is bipartite.
\end{lemma}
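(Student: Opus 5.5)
We will take $\mathcal{W}=\mathcal{Q}\cup\mathcal{V}\cup\mathcal{A}$, where $\mathcal{V}=\{W_v=T_v\times\R : v\in T^\times\}$ are the thick subtrees forced by the second condition and $\mathcal{A}$ is a family of auxiliary thick subtrees enforcing the fourth (``pairing'') condition. The target is that the overlap graph of $\mathcal{V}\cup\mathcal{A}$ is bipartite, with $\mathcal{V}$ essentially one side and $\mathcal{A}$ the other.

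\emph{Step 1: $\mathcal{V}$ is independent.} Two elements $T_u\times\R$ and $T_v\times\R$ are either nested (when one of $u,v$ is an ancestor of the other) or disjoint. So no two of them overlap.

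\emph{Step 2: $\mathcal{Q}\cup\mathcal{V}$ has no overlapping triangle.} A thick limb $P_x\times I_x$ overlaps $T_v\times\R$ exactly when $P_x$ meets $T_v$ but is not contained in it, that is, when $v$ lies on $P_x$ strictly below $s(P_x)$. Suppose $P_x\times I_x$ grazes $P_y\times I_y$. Then $P_x$ and $P_y$ meet only at $s(P_y)$. Hence no $v$ lies strictly inside both limbs below both starts: the descendants of $s(P_y)$ on $P_y$ avoid $P_x$ except at $s(P_y)$ itself, and $v=s(P_y)$ does not count. Combined with restrictedness of $\mathcal{Q}$ (which is triangle-free, since its intervals are nested or disjoint and grazing is very rigid), this rules out triangles of all types.

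\emph{Step 3: construct $\mathcal{A}$.} Process each vertex $v$ separately. Let $I^v_1<\dots<I^v_m$ be the maximal intervals among the $I_x$ with $v\in F_x$ and $x\notin T^\times$. Elements of $\mathcal{V}$ have interval $\R$, so they never lie in a disjoint triple. I would copy the device of Lemma~\ref{lem:firm} with its $D'_{R,i}$ chain. For each $i\ge2$, add the thick subtree $\{v\}\times J^v_i$, where $J^v_i$ is a slightly enlarged hull of $I^v_1\cup\dots\cup I^v_i$. These form a nested chain, so the elements of $\mathcal{A}$ at $v$ do not overlap each other. The chain also handles disjoint triples inside a single $I^v_j$: such a triple lies below the maximal level, and I would apply the same construction recursively to the sub-poset of intervals inside each $I^v_j$. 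Endpoints are perturbed to stay distinct. Because the auxiliary sets are the single-vertex path $\{v\}$, they cannot overlap any $W_u$: $\{v\}$ is either inside $T_u$ or disjoint from it.

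\emph{Step 4: verify the remaining conditions.} We need that $\mathcal{W}$ has no overlapping triangle, and that the intervals stay non-overlapping. The latter holds because each $J$ is a union hull of nested-or-disjoint intervals, so it is laminar with them. For bipartiteness, the element $\{v\}\times J$ overlaps $F_x\times I_x$ only when $I_x$ straddles an endpoint of $J$. By laminarity this cannot happen, so $\mathcal{A}$ is independent as well, and the overlap graph of $\mathcal{V}\cup\mathcal{A}$ is trivially bipartite (indeed edgeless, or nearly so).

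The main obstacle is the fourth condition together with triangle-freeness. A triple at $v$ whose members come from different vertices' chains might require an auxiliary set whose interval straddles elements of $\mathcal{Q}$ living at other vertices. This is exactly what laminarity must prevent. If a hull $J$ cannot be laminar with an interval $I_y$ of some limb through $v$ starting elsewhere, I would instead shrink the tree part of the auxiliary set to $T_v$ intersected with an appropriate limb. I would then use the condition ``$I_x\subset I_y\Rightarrow s(P_x)$ not a descendant of $s(P_y)$'' to show that the resulting overlaps are only with elements of $\mathcal{V}$, keeping the graph bipartite.
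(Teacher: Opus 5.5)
Your Step~2 is correct, and $\mathcal{V}\cup\mathcal{A}$ is indeed bipartite (even edgeless). However, the collection you build is not constrained.

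\textbf{First condition (no three pairwise overlapping elements).} This is the decisive failure. In Step~4 you claim $\{v\}\times J$ overlaps $P_x\times I_x$ only if $I_x$ straddles an endpoint of $J$, but this ignores the tree coordinate. With laminar intervals, $\{v\}\times J$ overlaps $P_x\times I_x$ exactly when $v\in P_x\neq\{v\}$ and $I_x\subsetneq J$. That is precisely the situation your hulls are designed to create.

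Concretely, suppose $P_x\times I_x$ grazes $P_y\times I_y$ at $v=s(P_y)$, with $P_y$ of positive length, and some other limb through $v$ has interval disjoint from $I_y$. Then your construction puts some $\{v\}\times J$ with $J\supsetneq I_y\supset I_x$ into $\mathcal{A}$, and it overlaps both members of the grazing pair, giving a triangle.

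This is not an artefact of how you group the chain. Suppose three maximal intervals at $v$ each contain such a grazing pair, which is easily arranged in a restricted collection. Any element witnessing the fourth condition for them must be auxiliary, and it meets both members of one of the pairs. To avoid a triangle, its tree part must contain a whole limb $P_x$ or $P_y$, not just $\{v\}$.

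The missing idea is that auxiliary elements must \emph{contain} every element of $\mathcal{Q}$ whose interval lies inside theirs. The paper achieves this by taking $T_y$ to be the union of the root-to-leaf limbs $P$ such that $P\times I_y$ contains some element of $\mathcal{Q}$. This gives three things:
\begin{itemize}
\item $T_y$ is a subtree through $r$;
\item the fourth condition is immediate, since both $F_1,F_2\subset T_y$;
\item $T_y\times I_y$ overlaps no element of $\mathcal{Q}$ below it.
\end{itemize}
The price is that $\mathcal{Y}$ now overlaps $\mathcal{S}$ (the graph is still bipartite). A triangle through $T_y\times I_y$ must then be reduced, using the fourth restricted bullet and the Helly property for subtrees, to a triangle in $\mathcal{Q}\cup\mathcal{S}$, which you correctly showed cannot exist.

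\textbf{Third condition (non-overlapping intervals).} This condition is global: all finite intervals in $\mathcal{W}$ must be pairwise non-overlapping, not just those at $v$. Your hull $J^v_i$ is laminar with the intervals it is built from, but not necessarily with others. A limb avoiding $v$ may have an interval containing $I^v_1$ and ending in the gap before $I^v_2$; then $J^v_2$ overlaps it.

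The correct way to insert intervals is to binarise the global laminar family of all the $I_x$, adding hulls of consecutive children of each node. This is exactly what the paper's choice of $\{I_y : y\in Y\}$ does.

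The remedy in your last paragraph, changing the tree part, cannot repair this, since the third condition concerns intervals only. Moreover, by the previous argument, tree parts need to grow towards the root rather than shrink.
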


\begin{proof}
    Let $\mathcal{S} = \{W_v=T_v\times\R \,:\, v\in T^\times\}$ as in the second bullet of the definition of a constrained collection of thick subtrees.
    Note that the thick subtrees in $\mathcal{S}$ are pairwise non-overlapping.
    
    Choose a finite collection of finite intervals $\{I_y : y\in Y\}$ of $\R$ such that:
    \begin{itemize}
    \item   for every $y\in Y$ there is some $x\in X$ such that $I_x\subset I_y$;
    \item   the intervals $ \{I_x : x\in X\} \cup \{I_y : y\in Y\}$ are pairwise non-overlapping with distinct endpoints;
    \item   for every disjoint triple $I_1,I_2,I_3\in \{I_x : x\in X\} \cup \{I_y : y\in Y\}$, there exists some $y\in Y$ such that $I_y$ contains exactly two of $I_1,I_2,I_3$.
    \end{itemize} 
    For each $y\in Y$, let $T_y$ be the subgraph of $T$ consisting of all limbs $P$ with start $r$ such that $P\times I_y$ contains some thick limb of $\mathcal{Q}$.
    Let $\mathcal{Y}= \{T_y\times I_y : y\in Y\}$.
    Note that the thick subtrees in $\mathcal{Y}$ are pairwise non-overlapping.
    Let $\mathcal{W}= \mathcal{Q} \cup \mathcal{S} \cup \mathcal{Y}$.
    
    The overlap graph of $\mathcal{W}\backslash \mathcal{Q} = \mathcal{S} \cup \mathcal{Y}$ is bipartite, so it remains to show that $\mathcal{W}$ is constrained. The choice of $\cal S$ gives the second bullet in the definition.
    By construction, the intervals $\{I_x : x\in X\cup Y \cup T^\times \}$ are pairwise non-overlapping and their (finite) endpoints are distinct. This gives the third bullet.

    Next, consider a vertex $v \in T$ and a triple $F_1\times I_1$, $F_2\times I_2$, $F_3 \times I_3$ in $\mathcal{W}$ with $v\in F_1\cap F_2\cap F_3$ and $I_1,I_2,I_3$ disjoint.
    Clearly $F_1\times I_1, F_2\times I_2, F_3 \times I_3 \not\in \mathcal{S}$, so $I_1,I_2,I_3$ have finite endpoints. 
    By the choice of $Y$, there exists some $T_y \times I_y \in \mathcal{Y}\subseteq \mathcal{W}$ such that $I$ contains exactly two of $I_1, I_2, I_3$, say $I_1$ and $I_2$.
    From the definition of $T_y$, both $F_1$ and $F_2$ must be subtrees of $T_y$. In particular, $v\in T_y$.

    It now just remains to show that no three $W_1,W_2,W_3\in \mathcal{W}$ pairwise overlap.
    Suppose for a contradiction that $W_1=F_1\times I_1$, $W_2=F_2 \times I_2$, and $W_3 = F_3 \times I_3$ are pairwise overlapping elements of $\cal W$.
    No pair of intervals overlap, so after relabelling we have that $I_1\subset I_2 \subset I_3$ and the subtrees $F_1,F_2,F_3$ pairwise overlap (and in particular pairwise intersect).
    No two elements of $\cal S$ overlap, so we can assume that $W_1,W_2\notin\cal S$.
    By construction, no element of $\mathcal{Y}$ overlaps with any other element of $\cal Y$, so at most one $W_i$ comes from $\cal Y$. Moreover, $F\times I\in\cal Y$ does not overlap with any $F'\times I'\in\cal Q$ with $I'\subset I$, so $W_2,W_3\notin\cal Y$.  
    This shows that $W_2\in \cal Q$.

    If $W_1\in\cal Y$, then there is some $F\times I\in\cal Q$ such that $I\subset I_1$, and hence $I\subset I_2$. 
    Since $\cal Q$ is a restricted collection of thick limbs, the start vertex $s(F)$ is not a descendant of $s(F_2)$. Because $F_1$ is the union of such limbs, the fact that $F_1$ and $F_2$ intersect implies that there is some such $F\times I$ such that $s(F_2)\in F$. 
    By again using restrictedness, any such $F\times I$ grazes $W_2$, so $F_1\cap F_2=\{s(F_2)\}$. By replacing $W_1$ with such an $F\times I$, we can therefore assume that $W_1\in\cal Q$, and we have that $F_1\cap F_2=\{s(F_2)\}$.
    By the Helly property for subtrees \cite{gyarfaslehel:helly}, it follows that $F_1\cap F_2\cap F_3 = \{s(F_2)\}$.
    
    We cannot have $W_3\in\cal S$, for then we would have $W_2\subset T_{s(F_2)}\times\R\subset W_3$.
    Thus $W_3\in \cal Q$. But now since $\cal Q$ is restricted, $W_2$ must graze $W_3$ and therefore $\{s(F_3)\}=F_2\cap F_3\supset F_1\cap F_2\cap F_3= \{s(F_2)\}$. But $s(F_2)\ne s(F_3)$ since $W_2$ and $W_3$ graze each other, which is a contradiction.
    %
    %
    Hence $\mathcal{W}$ is a constrained collection of thick subtrees, as desired.
\end{proof}

Similarly to Item~\ref{sh:construction}, a collection of thick subtrees has a dual CAT(0) cube complex.

\bsh{Construction} \label{sh:construction_BF}
Let $(T,r)$ be a tree, and let $\cal W$ be a finite collection of thick subtrees. By viewing each $W\in\cal W$ as a bipartition of $T\times\R$ into (closed) inside and (open) outside, we obtain a wallspace $(T\times\R,\cal W)$. As described in Item~\ref{sh:sageev}, this wallspace has a dual CAT(0) cube complex $Q$. Two walls in $Q$ cross if and only if the corresponding elements of $\cal W$ overlap, so the crossing graph of $Q$ is the overlap graph of $\cal W$.
\esh


    

Our next goal is to show that if we have a constrained collection of thick subtrees, then the dual CAT(0) cube complex from Item~\ref{sh:construction_BF} is 2-dimensional and has degree at most six. First we shall need the following technical lemma.

\begin{lemma} \label{lem:principal}
Let $(T,r)$ be a subcubic rooted tree, let $\mathcal{W}$ be a constrained collection of thick subtrees, and let $Q$ be the CAT(0) cube complex dual to $(T\times\R,\mathcal{W})$ as described in Item~\ref{sh:construction_BF}. Every vertex of $Q$ is represented by the principal ultrafilter $\phi_p$ corresponding to some $p\in T\times\R$.
\end{lemma}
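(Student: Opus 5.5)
We follow the strategy of the first half of the proof of \cref{lem:burdeg}. Given a vertex $\phi\in Q$, choose $p\in T\times\R$ minimising $d=\dist(\phi,\phi_p)$; this is finite because $\phi$ is at finite distance from some principal ultrafilter. Suppose $d>0$, let $P=\{W\in\mathcal W:\phi(W)\ne\phi_p(W)\}$, and let $W$ be a minimal element of $(P,<_{\phi_p})$. The goal is a point $q$ with $\phi_q$ differing from $\phi_p$ exactly on $W$. Then $\dist(\phi_q,\phi)=d-1$, contradicting minimality. By the argument of Lemma~\ref{lem:degree}, flipping $W$ in $\phi_p$ yields an ultrafilter $\psi$, because no $W'$ satisfies $W'<_{\phi_p}W$: such a $W'$ would have to lie in $P$ as well, since $\phi$ is an ultrafilter, and this would contradict the minimality of $W$. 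So the whole task is to show that $\psi$ is principal, i.e.\ that $\psi$ is realised by a point.

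Write $W=F\times I$. There are two cases. If $p\in W$, we look for $q\notin W$ lying in exactly the same elements of $\mathcal W\ssm\{W\}$ as $p$. If $p\notin W$, we look for $q\in W$ with the same property. In each case we take a candidate point near the boundary of $W$ closest to $p$. When $p\notin W$, this means a point of $F\times I$ adjacent to the projection of $p$; when $p\in W$, it means a point just outside, either beyond an endpoint of $I$ or at a vertex adjacent to $F$. We then argue that any element $W'$ separating $p$ from every such candidate violates either the minimality of $W$ or the ultrafilter condition.

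The key observation is that $W'$ separates $p$ from the candidate $q$ only if $W'$ and $W$ are in one of three relations. Either $W'$ is nested in $W$, or $W'$ contains $W$ in a way that the region separated from $p$ is exactly that relation, or $W'$ overlaps $W$. The nested or containing cases give $W'<_{\phi_p}W$ or the reverse containment. The reverse containment is incompatible with $\psi$ being an ultrafilter only if the two halfspaces become disjoint, and this is exactly what we check.

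For the overlapping case, we use the geometry of thick subtrees. Thick subtrees are products, and the intervals of non-$T^\times$ elements are non-overlapping, so an overlapping $W'$ either has interval nested in or containing $I$, or is one of the $T_v\times\R$. In each situation we can slide $q$: along the $\R$-direction past an endpoint of $I$ (using distinct endpoints), or along the tree direction to a different neighbouring vertex (using the subcubic hypothesis). This produces a candidate that $W'$ does not separate from $p$. The absence of three pairwise overlapping elements should guarantee that at most one obstruction occurs in each direction, so some candidate survives.

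We expect the main obstacle to be the case $p\in W$ with the exit through the tree direction, when several elements $T_v\times\R$ and thick subtrees meet at a vertex. Here the fourth (constrained) condition on triples at a vertex is presumably what is needed. It should rule out a configuration in which three disjoint-interval subtrees through a vertex block every exit without some $W'$ lying between them in $<_{\phi_p}$. This is the step where we would carefully enumerate the configurations around a single vertex of degree at most three.
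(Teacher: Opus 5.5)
Your reduction is fine. Minimising $\dist(\phi,\phi_p)$, taking $W$ minimal in $(P,<_{\phi_p})$, and checking that flipping $W$ in $\phi_p$ gives an ultrafilter $\psi$ all go through exactly as in \cref{lem:burdeg}. But the whole content of the lemma is then the claim that $\psi$ is realised by a point, and there you give only a hedged sketch (``should guarantee'', ``presumably''). That step cannot be completed from the stated hypotheses. Let $T$ be a single edge from $r$ to its child $a$, and let $\mathcal{W}=\{W_a=\{a\}\times\R,\ W=T\times[0,1],\ W'=\{a\}\times[-1,2]\}$. This collection is constrained: $W'\subset W_a$, so no three elements pairwise overlap; the two finite intervals are nested with distinct endpoints; and the last condition is vacuous. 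Choosing $W_a$, $W$ and $(W')^c$ gives an ultrafilter, since the pairwise intersections contain $\{a\}\times[0,1]$, $\{a\}\times[3,4]$ and $\{r\}\times[0,1]$. Yet $(W_a\cap W)\ssm W'=\varnothing$, so this vertex is not principal, whether $T\times\R$ is read as $V(T)\times\R$ or as the geometric product. Starting from $p=(a,5)$ you get $P=\{W\}$, and flipping $W$ produces exactly this vertex. Your sliding breaks precisely here. Every point of $W\cap W_a$ lies in $W'$, while points of $W$ off $\{a\}\times\R$ are separated from $p$ by $W_a$. There is at most one obstruction in each direction, as you predicted, but the two obstructions are nested rather than overlapping. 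So neither the no-triangle condition nor the fourth condition helps; the paper explicitly notes it does not use the fourth condition here.

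So the statement is false as written, and no enumeration of configurations will rescue your plan. The paper's proof takes a different route from yours: it orients each edge $e_v$ according to $\phi(W_v)$ and shows there is a unique sink $v_0$. It then disposes of the walls whose tree part misses $v_0$ and runs a one-dimensional argument on the intervals of the walls through $v_0$. That argument breaks at the corresponding point. It asserts that $\phi$ restricted to the walls through $v_0$ induces an ultrafilter on their intervals. In the example, however, $v_0=a$ and the induced choices include $[0,1]$ and $\R\ssm[-1,2]$, which are disjoint. Nor is the configuration artificial. Put $u=\ell^*_{C(\ell)}$. The collections built in \cref{thm:treerestricted} and \cref{lem:treeconstrained} contain $P_{C(\ell)}\times I_{C(\ell)}=\{u\}\times I_{C(\ell)}$. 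They also contain the limb $P_{\ell_C}\times I_{\ell_C}$, which has leaf $u$ and $I_{\ell_C}\subset I_{C(\ell)}$, and the wall $T_u\times\R=\{u\}\times\R$. Take $z\in I_{\ell_C}$ and flip $\{u\}\times I_{C(\ell)}$ in $\phi_{(u,z)}$; this wall is minimal, so the result is an ultrafilter, and it is non-principal there too. A correct version therefore needs an extra hypothesis excluding such pairs, or else \cref{lem:treedeg} must bound the degree at non-principal vertices separately.
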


\begin{proof}
Let $\phi\in Q$ be a vertex, which is an ultrafilter on $\cal W$. For $W\in\cal W$, let us write $W^c=T\times\R\ssm W$. For each $W\in\cal W$, the ultrafilter $\phi$ selects an element of $\{W,W^c\}$. We shall find a point $p\in T\times\R$ such that $\phi=\phi_p$. In other words, $p\in\phi(W)$ for all $W\in\cal W$.

Given $v\in T^\times$, let $e_v$ denote the edge from $v$ to its parent. If $\phi(W_v)=W_v$, then direct $e_v$ towards $v$. Suppose the directed graph $T$ has sinks $v_1$ and $v_2$. After relabelling, we can assume that $v_2$ is not a descendant of $v_1$. In particular, $v_1\in T^\times$, so since $v_1$ is a sink we must have $\phi(W_{v_1})=W_{v_1}$. Because $\phi$ is an ultrafilter, if $v_1$ is not a descendant of $v_2$, then $\phi(W_{v_2})=W_{v_2}^c$, contradicting the fact that $v_2$ is a sink. Thus if $v$ is the vertex adjacent to $v_2$ on the geodesic from $v_2$ to $v_1$, then $v$ is a child of $v_2$, and $v_1$ is a descendant of $v$. But $\phi$ is an ultrafilter, so we must have $\phi(W_v)=W_v$, which contradicts the assumption that $v_2$ is a sink.

We have shown that the finite directed graph $T$ has a unique sink $v_0$, and by definition this means that $\{v_0\}\times\R\subset\phi(W_v)$ for all $v\in T^\times$. Let $v_{-1}$ denote the parent of $v_0$, if it exists, and let $v_1$ and $v_2$ denote the children of $v_0$, if they exist. We have $\phi(W_{v_1})=W_{v_1}^c$, $\phi(W_{v_2})=W_{v_2}^c$, and $\phi(W_{v_{-1}})=W_{v_{-1}}$.

Given $W=F\times I\in\cal W$, if $v_0\notin F$, then $W$ is disjoint from $\{v_0\}\times\R$ and is nested in one of $W_{v_1}$, $W_{v_2}$, or $W_{v_{-1}})^c$. Since $\phi$ is an ultrafilter, this implies that $\phi(W)=W^c$, so $\{v_0\}\times\R\subset\phi(W)$.

Finally, consider $\cal W'=\{W=F\times I\in\cal W\,:\,v_0\in F\}$. Let $Y$ be an indexing set for $\cal W'$, so that $\cal W'=\{W_y=F_y\times I_y\,:\,y\in Y\}$, and let $\cal I'=\{I_y\,:\,y\in Y\}$. The restriction of $\phi$ to $\cal W'$ induces an ultrafilter $\phi'$ on $\cal I'$. Since no two elements of $\cal I'$ overlap, there is a minimal element $I_0\subset\R$ of the poset $(\cal I',\subset)$ such that $\phi'(I_0)=I_0$. Since the finite endpoints of the elements of $\cal I'$ are distinct, there is a point $z\in I_0$ such that if $I\in\cal I'\ssm\{I_0\}$ has $I\subset I_0$, then $z\notin I$.

Since $\phi'$ is an ultrafilter, if $I\in\cal I'$ is disjoint from $I$, then $\phi(I)\supset\phi(I_0)=I_0$, so $z\in\phi(I)$. If $I$ intersects $I_0$, then either it contains $I_0$, in which case $\phi(I)=I\ni z$, or it is nested in $I_0$, in which case the minimality of $I_0$ implies that $\phi(I)=I^c$. The choice of $z$ then ensures that $z\in\phi(I)$. 

Consider the point $p=(v_0,z)\in T\times\R$. By the previous paragraph we have $p\in\phi(W)$ for all $W\in\cal W'$. By the choice of $v_0$ we have $p\in\phi(W)$ for all $W\in\cal W\ssm\cal W'$. Thus $\phi_p=\phi$ as desired.
\end{proof}

Note that in the above proof we did not use the final item in the definition of a constrained collection of thick subtrees.

\begin{lemma}\label{lem:treedeg}
    Let $(T,r)$ be a subcubic rooted tree, let $\mathcal{W}$ be a constrained collection of thick subtrees, and let $Q$ be the CAT(0) cube complex dual to $(T\times\R,\mathcal{W})$ as described in Item~\ref{sh:construction_BF}. We have $\dim Q\le2$ and the degree of each vertex of $Q$ is at most six. Moreover, $Q$ has a vertex of degree at most four.
\end{lemma}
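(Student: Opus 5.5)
The dimension bound is immediate from Item~\ref{sh:construction_BF}. The crossing graph of $Q$ is the overlap graph of $\cal W$, and by the first bullet of the definition of constrained no three elements of $\cal W$ pairwise overlap. So $\Cross(Q)$ is triangle-free, and $\dim Q\le2$.

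For the degree bound I will follow the template of \cref{lem:burdeg}. By \cref{lem:principal}, every vertex of $Q$ has the form $\phi_p$ with $p=(v_0,z)\in T\times\R$. By \cref{lem:degree}, the degree of $\phi_p$ is the number of minimal elements of $(\cal W,<_{\phi_p})$. These are the walls $W$ such that flipping only $\phi_p(W)$ still gives an ultrafilter; geometrically, they are the walls bounding the region of $p$.

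I split these minimal walls into two groups.
\begin{itemize}
\item \emph{Tree-direction walls.} The walls $W=F\times I$ with $v_0\notin F$ are nested in one of $W_{v_1}$, $W_{v_2}$ or $W_{v_{-1}}^c$, as in the proof of \cref{lem:principal}. So the only possible minimal ones among them are $W_{v_1}$ and $W_{v_2}$, for the children $v_1,v_2$ of $v_0$. The wall $W_{v_0}$ itself (if $v_0\ne r$) contains $p$ and is the candidate on the parent side. This gives at most three walls, using that $T$ is subcubic and that $r$ has at most two children.
\item \emph{Interval-direction walls.} The remaining walls are those $W\in\cal W'$, i.e.\ with $v_0\in F$ and $I$ finite. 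Minimality among these is governed by the laminar family $\cal I'$. The minimal element containing $z$ is the unique smallest $I_0\ni z$, which gives one wall. The others are the maximal intervals of $\cal I'$ nested inside $I_0$ and not containing $z$, together with (if $z$ lies in no interval) the maximal intervals of $\cal I'$.
\end{itemize}

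The final bullet of constrainedness is exactly what controls the second group. Suppose three pairwise disjoint maximal intervals $I_1,I_2,I_3\subset I_0$ all came from walls containing $v_0$. Then some $F\times I\in\cal W$ with $v_0\in F$ would have $I$ containing exactly two of them. Since $I\subset I_0$ and $z\notin I$ by laminarity, this contradicts maximality. So at most two of these children occur, and the naive count is $1+2$ in the interval direction plus $3$ in the tree direction.

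The main obstacle is that this naive count gives $6$ only if nothing is double-counted or overcounted. I must also check that walls of $\cal W'$ which do not contain $v_0$ in a suitable sense, or whose subtrees $F$ are small, cannot sneak in as extra minimal elements. In particular, a wall in $\cal W'$ might fail to be minimal because it is nested in some $W_{v_i}$ when $F\subset T_{v_i}\cup\{\ldots\}$; I need to compare it carefully against $W_{v_0}$ and $W_{v_i}$. I expect a careful case check on whether $v_0=r$ and whether the interval walls are comparable with the $W_v$ to give the total bound of six. If a tree-direction wall coincides in role with $I_0$, the count only decreases.

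For the moreover statement, I take $p=(r,z)$ with $z$ outside all finite intervals; this is possible since the intervals are finite and finitely many. Then there is no wall $W_{v_0}$, since $r\notin T^\times$. The tree-direction walls are at most the two $W_{v_i}$ for the children of $r$, since $r$ has degree at most $2$. The interval-direction walls are the maximal intervals of $\cal I'$, and by the constrained condition applied at $v=r$ at most two pairwise disjoint maximal ones occur. This gives degree at most four.
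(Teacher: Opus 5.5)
Your outline is the paper's own argument. You reduce to principal vertices $\phi_p$ with $p=(v_0,z)$ via \cref{lem:principal} and \cref{lem:degree}. You then bound the minimal walls by the parent wall, the child walls, the wall with smallest interval $I_0\ni z$, and at most two walls whose intervals are maximal inside $I_0$; these are items (1)--(4) of the paper's proof. The step you defer as the ``main obstacle'' is a genuine gap, and the paper's proof does not close it either.

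The problem is not comparability with the tree walls but overlap between two walls through $v_0$. Suppose $W=F\times I$ and $W'=F'\times I'$ have $v_0\in F\cap F'$, $z\notin I'$ and $I\subsetneq I'$. Then $W'$ stops $W$ from being minimal only when $W\subseteq W'$. If $F\not\subseteq F'$, the two walls overlap. The first bullet allows this as long as $F\cap F'=\{v_0\}$, which is precisely grazing, and then $W$ is minimal too. The fourth bullet says nothing here because $I$ and $I'$ are nested. So your description of the interval-direction minimal walls is wrong. In fact the bound of six is false for general constrained collections, so no case check can finish the proof.

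\emph{The counterexample.} Let $T$ be the subcubic rooted tree with root $r$, a single child $v$ of $r$, and two leaf children $c_1,c_2$ of $v$. Let $\mathcal W$ consist of $W_v,W_{c_1},W_{c_2}$ together with
\[
A=T\times[-1,6],\qquad B=[v,c_1]\times[0,3],\qquad C=[r,v]\times[1,2],\qquad D=\{v\}\times[4,5].
\]
\begin{itemize}
\item The overlapping pairs are $\{A,W_v\}$, $\{A,W_{c_1}\}$, $\{A,W_{c_2}\}$, $\{B,W_{c_1}\}$, $\{C,W_v\}$ and $\{B,C\}$, which contain no triangle.
\item The finite intervals are laminar with distinct endpoints.
\item No three intervals are pairwise disjoint (tree walls have interval $\R$), so the fourth bullet is vacuous.
\end{itemize}
Thus $\mathcal W$ is constrained.

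Take $p=(v,7/2)$. The walls containing $p$ are $W_v$ and $A$. Each of them meets every wall, and neither contains the other. The complement of each of the other five walls contains $\{r\}\times(6,\infty)$, which misses $W_v\cup A$. Finally, no containment holds among $W_{c_1},W_{c_2},B,C,D$. Hence all seven walls are minimal for $<_{\phi_p}$, and $\phi_p$ has degree seven. The wall your count misses is $C$: its interval lies in $[0,3]$, yet $C\not\subseteq B$.

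\emph{Further consequences.}
\begin{itemize}
\item The same example breaks \cref{lem:principal}, on which you rely. Flipping $C$ at $\phi_p$ gives a vertex of $Q$ with $W_v\cap C\cap B^c=\varnothing$, so it is not principal; the restriction to $\mathcal W'$ need not induce an ultrafilter on $\mathcal I'$.
\item Your witness $(r,z)$ for the last claim fails in the same way. Let $T$ be a root $r$ with two leaves $c_1,c_2$, and let $\mathcal W=\{W_{c_1},W_{c_2},\{r\}\times[0,3],[r,c_1]\times[1,2],\{r\}\times[4,5]\}$, which is constrained. All five walls are inclusion-maximal, so $\phi_{(r,10)}$ has degree five.
\item This is not an artefact of generality. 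Apply \cref{lem:treeconstrained} to the restricted collection $\{[r,v,c_2]\times[1,2],\,[v,c_1]\times[0,3],\,[v,c_2]\times[4,5]\}$, in which the first limb grazes the second, with $\mathcal Y=\{T\times[-1,6]\}$. The resulting constrained collection again has $\phi_{(v,7/2)}$ of degree seven.
\end{itemize}
A correct statement needs an extra hypothesis controlling such grazing walls, and the count must then be redone.
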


\begin{proof}
    The crossing graph of $Q$ is the overlap graph of $\cal W=\{W_x=F_x\times I_x\,:\,x\in X\}$, so since no three elements of $\mathcal{W}$ pairwise overlap, the dimension of $Q$ is at most two.
    By \cref{lem:degree}, to bound the maximum degree of $Q$, we must bound the number of minimal elements of $(\mathcal{W},<_\phi)$ for each $\phi\in Q$. 
    By \cref{lem:principal}, the map $T \times \R \to Q$, given by sending $p\in T \times \R$ to the principal ultrafilter $\phi_p$ it defines is onto. Thus controlling the number of minimal elements of $(\cal W,<_\phi)$ is equivalent to controlling the number of $W\in\cal W$ that bound regions of $T \times \R$ representing $\phi$. 

    Consider a point $(v,z)\in T \times \R$.
    We can bound the regions of $T\times \R$ representing $\phi_p$ by the thick subtrees:
    \begin{enumerate}
        \item $T_v\times \R$ of $\mathcal{W}$, if $v\ne r$;
        \item $T_w\times \R$ of $\mathcal{W}$ for each child $w$ of $v$;
        \item $F_1\times I_1 \in \W$, where $I_1$ is minimal subject to $v\in V(F_1)$ and $z\in I_1$;
        \item $F\times I\in \W$ such that $I$ is maximal subject to $v\in V(F)$ and $I\subsetneq I_1$.
    \end{enumerate}
    There is at most one thick subtree of type (1), depending on whether $v=r$.
    Since $T$ is subcubic, there are at most two of type (2), depending on the number of children of $v$.
    Clearly there is at most one thick subtree of type (3).
    By the forth bullet of the definition of a constrained collection of thick subtrees, there are at most two thick subtrees of type~(4).
    Therefore, the degree of each vertex of $Q$ is at most $1+2+1+2=6$. If $v$ is a leaf of $T$, then the degree of $\phi_{(v,z)}$ is at most four, because $v$ has no children.
\end{proof}

Combining the above results, we can now prove \cref{thm:bad_square_complex2}. It follows directly from the following theorem by letting $n$ be greater than the size of some 5--chromatic Burling graph.

\begin{theorem} \label{thm:6}
For every positive integer $n$ there is a CAT(0) square complex $Q$ such that the vertices of $Q$ have degree at most six, $Q$ cannot be isometrically embedded into any finite product of trees, and every subgraph of the crossing graph of $Q$ with at most $n$ vertices is 4-colourable.
\end{theorem}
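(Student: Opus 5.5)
The plan is to build, for each $k$, a finite CAT(0) square complex $Q_k$ with all the required properties except that it uses chromatic number $>k$. Then I will glue the $Q_k$ along a ray, exactly as in the proof of \cref{thm:burbox}.

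Fix $n$ and $k$, and set $g=n+1$. Apply \cref{thm:treerestricted} with $g$ and $k+2$ to obtain a subcubic rooted tree $(T,r)$ and a restricted collection $\cal Q$ of thick limbs whose overlap graph has chromatic number at least $k+2$ and girth at least $n+1$. Next apply \cref{lem:treeconstrained} to extend $\cal Q$ to a constrained collection $\cal W\supseteq\cal Q$ of thick subtrees such that the overlap graph of $\cal W\ssm\cal Q$ is bipartite. Let $Q_k$ be the CAT(0) cube complex dual to $(T\times\R,\cal W)$ from Item~\ref{sh:construction_BF}. By \cref{lem:treedeg}, $Q_k$ is a square complex, all its vertices have degree at most six, and it has a vertex $o_k$ of degree at most four. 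Its crossing graph is the overlap graph of $\cal W$. This graph contains the overlap graph of $\cal Q$ as an induced subgraph, so its chromatic number exceeds $k$. By \cref{lem:crossing_colour}, $Q_k$ does not embed isometrically in a product of $k$ trees.

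For the local colouring condition, take a subgraph $F$ of $\Cross(Q_k)$ with at most $n$ vertices and split $V(F)$ into the part in $\cal Q$ and the part in $\cal W\ssm\cal Q$. The induced subgraph on the $\cal Q$ part has at most $n<g$ vertices, so it has no cycles. It is therefore a forest and is $2$-colourable. The other part is bipartite by \cref{lem:treeconstrained}. Using disjoint palettes of two colours for each part gives a proper $4$-colouring of $F$. This step is straightforward.

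The remaining step is to assemble a single $Q$. Take the path $\N$ and glue $o_k\in Q_k$ to the vertex $k\in\N$. This is a wedge of CAT(0) square complexes along vertices and a tree, so it is again CAT(0) and $2$-dimensional. Each $Q_k$ is a convex subcomplex, so $Q$ has no isometric embedding into any finite product of trees.

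The main obstacle is the degree bound at the gluing points. A vertex $k\in\N$ gains two path edges beyond the degree of $o_k$, so I need $\deg o_k\le4$; this is exactly why \cref{lem:treedeg} provides a vertex of degree at most four. For the endpoint $1$ only one edge is added. Any $o_k$ of degree at most four suffices, for instance the image of a point over a leaf of $T$.

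Finally, I must check that the crossing graph of $Q$ is the disjoint union of the crossing graphs of the $Q_k$ together with isolated vertices from the edges of $\N$. This holds because hyperplanes in different pieces never cross. Hence any subgraph of $\Cross(Q)$ with at most $n$ vertices splits into subgraphs of the $\Cross(Q_k)$ with at most $n$ vertices each, together with isolated vertices, and so is $4$-colourable.
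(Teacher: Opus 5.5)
Your proposal is correct and matches the paper's proof step for step. You use Theorem~\ref{thm:treerestricted} to get large girth and chromatic number, Lemma~\ref{lem:treeconstrained} to pass to a constrained collection whose added part has bipartite overlap graph, Lemma~\ref{lem:treedeg} for the dimension and degree bounds (including the degree-at-most-four vertex needed when gluing along $\N$), and the same ray-gluing as in Theorem~\ref{thm:burbox}; your explicit two-palette $4$--colouring argument and the harmless choice of $k+2$ in place of $k+1$ simply spell out what the paper states tersely.
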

    
\begin{proof}
    Fix a value of $n$. By \cref{thm:treerestricted}, there exists a subcubic rooted tree $(T_k,r_k)$ and a restricted collection of thick limbs $\cal Q_k$ such that the overlap graph of $\cal Q_k$ has chromatic number greater than $k$ and girth greater than $n$. In particular, every $n$-vertex subgraph of the overlap graph is bipartite.

    By \cref{lem:treeconstrained}, there is a constrained collection of thick subtrees $\mathcal{W}_k$ containing $\mathcal{Q}_k$ such that the overlap graph of $\mathcal{W}_k\backslash \mathcal{Q}_k$ is bipartite.
    Therefore, every $n$-vertex subgraph is $4$--colourable.
    Clearly the overlap graph still has chromatic number greater than $k$.

    Let $Q_k$ be the CAT(0) cube complex dual to $(T_k\times\R,\cal W_k)$ as described in Item~\ref{sh:construction_BF}. By \cref{lem:treedeg}, it has dimension at most two, its vertices all have degree at most six, and it has a vertex $o_k$ of degree at most four.
    The crossing graph of $Q_k$ is the overlap graph of $\cal W$, and therefore has chromatic number greater than $k$. Hence $Q_k$ cannot be isometrically embedded into a product of $k$ trees.

    Consider the infinite path graph $\N$ of positive integers. Let $Q$ be the CAT(0) square complex obtained by gluing, for all $k$, the point $o_k\in Q_k$ to the point $k\in\N$. Its vertices have degree at most six. Since every $Q_k$ isometrically embeds in $Q$, the latter cannot be isometrically embedded into any finite product of trees. The crossing graph of $Q$ is the disjoint union of the crossing graphs of $\N$ and the $Q_k$, so every $n$-vertex subgraph of it is 4--colourable.
\end{proof}


\appendix
\section{Explicit construction of frames for Burling graphs} \label{sec:frames:1}

In Section~\ref{sec:frames}, we used the result from \cite{pournajafitrotignon:burling:1} that every Burling graph is the intersection graph of a strict collection of frames in order to produce a wallspace structure on $\R^2$, ultimately leading to \cref{thm:bad_square_complex}. Unfortunately, this leaves the collection of frames somewhat mysterious.

In this appendix, we give an explicit sequence of collections of frames that can be used (after applying \cref{lem:firm}) to produce CAT(0) square complexes of degree at most five that do not isometrically embed into products of increasingly many trees. The construction is a translation of a recent alternative construction of the sequence of Burling graphs in~\cite{abrishamibrianskidaviesdumasarikovarzazewskiwalczak:burling}.

\ubsh{Base case}
The collection $C_1$ consists of two frames, $R_a$ and $R_b$. The frame $R_a$ is bounded. The frame $R_b$ is unbounded and exits $R_a$. See the left of Figure~\ref{fig:C'12}.
\uesh

\begin{figure}[ht]
\centering\includegraphics[height=4.5cm]{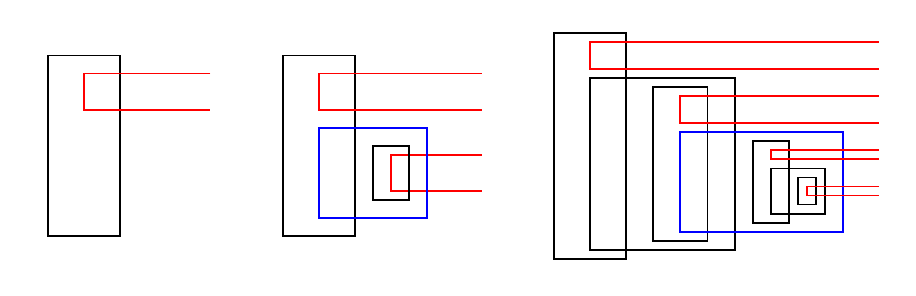}
\caption{Left: the configuration $C_1$, with its one unbounded frame coloured red. Middle: the configuration $C_2$. The blue frame is a ``duplicate'' of the red frame above it, and inside the blue frame we see a copy of $C_1$. On the right is $C_3$, with the duplicate again blue. The intersection graph of $C_3$ contains a pentagon.} \label{fig:C'12}
\end{figure}

\ubsh{The queue}
For each $n$, let $U_n\subset C_n$ denote the set of unbounded frames. We keep track of a ``queue'' $\cal Q_n$, which is a sequence of elements of $U_n$, whose length will grow as $n$ grows. We have $U_1=\{R_b\}$, and we set $\cal Q_1=(R_b)$.
\uesh

\ubsh{Inductive step}
We now describe how to produce $C_{n+1}$ from $C_n$. See the middle of Figure~\ref{fig:C'12}. Start by taking two copies $C^1_n$ and $C^2_n$ of $C_n$. Let $R\in U_n$ be the first term in the queue $\cal Q_n$. By rescaling and translating $C^2_n$, we can place it immediately below the frame $R\in C^1_n$ and outside every frame of $C^1_n$.

If $R\in C_n$ intersects a frame in $C_n$, then it exits that frame because $R$ is unbounded. We introduce a new ``duplicate'' frame $R'$ that is bounded, exits exactly the same set of frames that $R$ exits, and in which every bounded element of $C^2_n$ is nested. Observe that the frames in $C_{n+1}$ that intersect $R'$ are precisely the frames in $C^1_n$ that intersect $R$ together with all the unbounded frames in $C^2_n$.

To obtain $\cal Q_{n+1}$ from $\cal Q_n$, delete the first term of $\cal Q_n$ and then append the elements of $U_{n+1}$ in an arbitrary order.
\uesh

It is easy to see that the intersection graph of $C_n$ is the $n^\mathrm{th}$ term in the sequence of graphs constructed in \cite[\S2.1]{abrishamibrianskidaviesdumasarikovarzazewskiwalczak:burling}. Thus every Burling graph appears as an induced subgraph of the intersection graph of $C_n$ for sufficiently large $n$, by \cite[Lem.~2.2]{abrishamibrianskidaviesdumasarikovarzazewskiwalczak:burling}.

\bibliographystyle{alpha}
{\footnotesize
\bibliography{bibtex}}

\newcommand{\etalchar}[1]{$^{#1}$}
\begin{thebibliography}{DKK{\etalchar{+}}26}

\bibitem[ABCR94]{assousbouchittecharrettonrozoy:finite}
Marc~Roland Assous, Vincent Bouchitt\'e, Christine Charretton, and Brigitte Rozoy.
\newblock Finite labelling problem in event structures.
\newblock {\em Theoret. Comput. Sci.}, 123(1):9--19, 1994.

\bibitem[ABD{\etalchar{+}}25]{abrishamibrianskidaviesdumasarikovarzazewskiwalczak:burling}
Tara Abrishami, Marcin Briański, James Davies, Xiying Du, Jana Masaříková, Paweł Rzążewski, and Bartosz Walczak.
\newblock Burling graphs in graphs with large chromatic number.
\newblock {\em arXiv:2510.19650}, 2025.

\bibitem[BBP26]{baderbensaidpetyt:quasiisometric:rigidity}
Shaked Bader, Oussama Bensaid, and Harry Petyt.
\newblock Quasiisometric embeddings between right-angled {A}rtin groups: rigidity.
\newblock {\em arXiv:2605.12300}, 2026.

\bibitem[BC93]{barthelemyconstantin:median}
Jean-Pierre Barth\'el\'emy and Julien Constantin.
\newblock Median graphs, parallelism and posets.
\newblock {\em Discrete Math.}, 111(1-3):49--63, 1993.
\newblock Graph theory and combinatorics (Marseille-Luminy, 1990).

\bibitem[BCE10]{bandeltchepoieppstein:combinatorics}
Hans-J\"urgen Bandelt, Victor Chepoi, and David Eppstein.
\newblock Combinatorics and geometry of finite and infinite squaregraphs.
\newblock {\em SIAM J. Discrete Math.}, 24(4):1399--1440, 2010.

\bibitem[BCE15]{bandeltchepoieppstein:ramified}
Hans-J\"urgen Bandelt, Victor Chepoi, and David Eppstein.
\newblock Ramified rectilinear polygons: coordinatization by dendrons.
\newblock {\em Discrete Comput. Geom.}, 54(4):771--797, 2015.

\bibitem[BE51]{debruijnerdos:colour}
N.~G.~de Bruijn and P.~Erd\"os.
\newblock A colour problem for infinite graphs and a problem in the theory of relations.
\newblock {\em Indag. Math.}, 13:369--373, 1951.

\bibitem[Bow14]{bowditch:embedding}
Brian~H. Bowditch.
\newblock Embedding median algebras in products of trees.
\newblock {\em Geom. Dedicata}, 170:157--176, 2014.

\bibitem[Bow22]{bowditch:median:book}
Brian~H. Bowditch.
\newblock Median algebras.
\newblock {\em Preprint available at \mbox{bhbowditch.com/papers/median-algebras.pdf}}, 2022.

\bibitem[Bur65]{burling:oncolouring}
James~P. Burling.
\newblock {\em On colouring problems of families of polytopes}.
\newblock PhD thesis, University of Colorado, Boulder, 1965.

\bibitem[But19]{button:groups}
J~Button.
\newblock Groups acting faithfully on trees and properly on products of trees.
\newblock {\em arXiv:1910.04614}, 2019.

\bibitem[BV89]{bandeltvandevel:embedding}
H.-J. Bandelt and M.~van~de Vel.
\newblock Embedding topological median algebras in products of dendrons.
\newblock {\em Proc. London Math. Soc. (3)}, 58(3):439--453, 1989.

\bibitem[Car26]{carmesin:embedding}
Johannes Carmesin.
\newblock Embedding simply connected 2-complexes in 3-space.
\newblock {\em Mem. Amer. Math. Soc.}, 319(1625):v+82, 2026.

\bibitem[CH13]{chepoihagen:onembeddings}
Victor Chepoi and Mark Hagen.
\newblock On embeddings of {CAT}(0) cube complexes into products of trees via colouring their hyperplanes.
\newblock {\em J. Combin. Theory Ser. B}, 103(4):428--467, 2013.

\bibitem[CH26]{chepoihagen:corrigendum}
Victor Chepoi and Mark Hagen.
\newblock Corrigendum to ``{O}n embeddings of {CAT}(0) cube complexes into products of trees via colouring their hyperplanes''.
\newblock {\em J. Combin. Theory Ser. B}, 179:367--372, 2026.

\bibitem[Che00]{chepoi:graphs}
Victor Chepoi.
\newblock Graphs of some {CAT}(0) complexes.
\newblock {\em Adv. in Appl. Math.}, 24(2):125--179, 2000.

\bibitem[Che12]{chepoi:nice}
Victor Chepoi.
\newblock Nice labeling problem for event structures: a counterexample.
\newblock {\em SIAM J. Comput.}, 41(4):715--727, 2012.

\bibitem[CKS26]{chaniotiskoertsspirkl:intersections}
Aristotelis Chaniotis, Hidde Koerts, and Sophie Spirkl.
\newblock Intersections of graphs and {$\chi$}-boundedness.
\newblock {\em SIAM J. Discrete Math.}, 40(1):420--448, 2026.

\bibitem[CMS25]{chaniotismiraftabspirkl:graphs}
Aristotelis Chaniotis, Babak Miraftab, and Sophie Spirkl.
\newblock Graphs of bounded chordality.
\newblock {\em Electron. J. Combin.}, 32(4.7):1--22, 2025.

\bibitem[Dav21]{davies:box}
James Davies.
\newblock Box and segment intersection graphs with large girth and chromatic number.
\newblock {\em Adv. Comb.}, 7:1--9, 2021.

\bibitem[Des47]{descartes:three}
Blanche Descartes.
\newblock A three colour problem.
\newblock {\em Eureka}, 9(21):24--25, 1947.

\bibitem[Des54]{descartes:solution}
Blanche Descartes.
\newblock Solution to advanced problem 4526.
\newblock {\em Amer. Math. Monthly}, 61(5):352--353, 1954.

\bibitem[DHY24]{davieshatzelyepremyan:counterexample}
James Davies, Meike Hatzel, and Liana Yepremyan.
\newblock Counterexample to {B}abai's lonely colour conjecture.
\newblock {\em arXiv:2410.05199}, 2024.

\bibitem[DHY26]{davieshatzelyepremyan:minimal}
James Davies, Meike Hatzel, and Liana Yepremyan.
\newblock Minimal {C}ayley graphs with large chromatic number.
\newblock {\em arXiv:2608.06254}, 2026.

\bibitem[DJ99]{dranishnikovjanuszkiewicz:every}
A.~Dranishnikov and T.~Januszkiewicz.
\newblock Every {C}oxeter group acts amenably on a compact space.
\newblock In {\em Proceedings of the 1999 {T}opology and {D}ynamics {C}onference ({S}alt {L}ake {C}ity, {UT})}, volume~24, pages 135--141, 1999.

\bibitem[DKK{\etalchar{+}}26]{davieskellerkleistsmorodinskywalczak:solution}
James Davies, Chaya Keller, Linda Kleist, Shakhar Smorodinsky, and Bartosz Walczak.
\newblock A solution to {R}ingel's circle problem.
\newblock {\em J. Eur. Math. Soc. (JEMS)}, 28(11):4873--4892, 2026.

\bibitem[FJM{\etalchar{+}}18]{felsnerjoretmicektrotterwiechert:burling}
Stefan Felsner, Gwena\"el Joret, Piotr Micek, William~T. Trotter, and Veit Wiechert.
\newblock Burling graphs, chromatic number, and orthogonal tree-decompositions.
\newblock {\em Electron. J. Combin.}, 25(1.35):1--8, 2018.

\bibitem[Gen22]{genevois:median}
Anthony Genevois.
\newblock Median sets of isometries in {CAT}(0) cube complexes and some applications.
\newblock {\em Michigan Math. J.}, 71(3):487--532, 2022.

\bibitem[Gen23]{genevois:algebraic}
Anthony Genevois.
\newblock Algebraic properties of groups acting on median graphs.
\newblock {\em Preprint available at \mbox{sites.google.com/view/agenevois/books}}, 2023.

\bibitem[GL70]{gyarfaslehel:helly}
A.~Gy{\'a}rf\'as and J.~Lehel.
\newblock A {H}elly-type problem in trees.
\newblock In {\em Combinatorial theory and its applications, {I}-{III} ({P}roc. {C}olloq., {B}alatonf\"ured, 1969)}, volume~4 of {\em Colloq. Math. Soc. J\'anos Bolyai}, pages 571--584. North-Holland, Amsterdam-London, 1970.

\bibitem[Gol78]{golumbic1978trivially}
Martin~Charles Golumbic.
\newblock Trivially perfect graphs.
\newblock {\em Discrete Mathematics}, 24(1):105--107, 1978.

\bibitem[Gro87]{gromov:hyperbolic}
M.~Gromov.
\newblock Hyperbolic groups.
\newblock In {\em Essays in group theory}, volume~8 of {\em Math. Sci. Res. Inst. Publ.}, pages 75--263. Springer, New York, 1987.

\bibitem[Gy{\'a}87]{gyarfas:problems}
A.~Gy{\'a}rf\'as.
\newblock Problems from the world surrounding perfect graphs.
\newblock In {\em Proceedings of the {I}nternational {C}onference on {C}ombinatorial {A}nalysis and its {A}pplications ({P}okrzywna, 1985)}, volume 19.3-4, pages 413--441, 1987.

\bibitem[Hag08]{haglund:aspects}
Fr{\'e}d{\'e}ric Haglund.
\newblock Aspects combinatoires de la th{\'e}orie g{\'e}om{\'e}trique des groupes.
\newblock {\em Habilitation, Universit\'{e} Paris Sud}, 2008.

\bibitem[HJ63]{halesjewett:regularity}
A.~W. Hales and R.~I. Jewett.
\newblock Regularity and positional games.
\newblock {\em Trans. Amer. Math. Soc.}, 106:222--229, 1963.

\bibitem[Hol07]{holloway:embeddings}
Gemma~Lauren Holloway.
\newblock {\em Embeddings of {CAT}(0) cube complexes in products of trees}.
\newblock PhD thesis, University of Southampton. https://eprints.soton.ac.uk/66300, 2007.

\bibitem[Lea13]{leary:metric}
Ian~J. Leary.
\newblock A metric {K}an-{T}hurston theorem.
\newblock {\em J. Topol.}, 6(1):251--284, 2013.

\bibitem[Mie14]{miesch:injective}
Benjamin Miesch.
\newblock Injective metrics on cube complexes.
\newblock {\em arXiv:1411.7234}, 2014.

\bibitem[MW86]{monma1986intersection}
Clyde~L Monma and Victor~K Wei.
\newblock Intersection graphs of paths in a tree.
\newblock {\em Journal of Combinatorial Theory, Series B}, 41(2):141--181, 1986.

\bibitem[PKK{\etalchar{+}}13]{pawlikkozikkrawczyklasonmicektrotterwalczak:triangle:geometric}
Arkadiusz Pawlik, Jakub Kozik, Tomasz Krawczyk, Micha\l{} Laso\'n, Piotr Micek, William~T. Trotter, and Bartosz Walczak.
\newblock Triangle-free geometric intersection graphs with large chromatic number.
\newblock {\em Discrete Comput. Geom.}, 50(3):714--726, 2013.

\bibitem[PT23]{pournajafitrotignon:burling:1}
Pegah Pournajafi and Nicolas Trotignon.
\newblock Burling graphs revisited, part {I}: {N}ew characterizations.
\newblock {\em European J. Combin.}, 110(103686):1--24, 2023.

\bibitem[PV88]{promelvoigt:sparse:grahamrothschild}
Hans~J\"urgen Pr\"omel and Bernd Voigt.
\newblock A sparse {G}raham-{R}othschild theorem.
\newblock {\em Trans. Amer. Math. Soc.}, 309(1):113--137, 1988.

\bibitem[PV90]{promelvoigt:sparse:galaiwitt}
H.~J. Pr\"omel and B.~Voigt.
\newblock A sparse {G}allai-{W}itt theorem.
\newblock In {\em Topics in combinatorics and graph theory ({O}berwolfach, 1990)}, pages 747--755. Physica, Heidelberg, 1990.

\bibitem[Rad43]{rado:note}
R.~Rado.
\newblock Note on combinatorial analysis.
\newblock {\em Proc. London Math. Soc. (2)}, 48:122--160, 1943.

\bibitem[R{\"o}d90]{rodl:onramsey}
Vojt\v{e}ch R{\"o}dl.
\newblock On {R}amsey families of sets.
\newblock {\em Graphs Combin.}, 6(2):187--195, 1990.

\bibitem[Rol98]{roller:poc}
Martin Roller.
\newblock Poc sets, median algebras and group actions.
\newblock {\em Habilitationsschrift, Universit\"at Regensburg}, 1998.
\newblock Available on the arXiv at \mbox{arXiv:1607.07747}.

\bibitem[RT91]{rozoythiagarajan:event}
Brigitte Rozoy and P.~S. Thiagarajan.
\newblock Event structures and trace monoids.
\newblock {\em Theoret. Comput. Sci.}, 91(2):285--313, 1991.

\bibitem[RW26]{rzazewskiwalczak:polynomial}
Pawe\l{} Rzą\.zewski and Bartosz Walczak.
\newblock Polynomial-time recognition and maximum independent set in {B}urling graphs.
\newblock In {\em Graph-theoretic concepts in computer science}, volume 16124 of {\em Lecture Notes in Comput. Sci.}, pages 445--460. Springer, Cham, [2026] \copyright 2026.

\bibitem[San10]{santocanale:nice}
Luigi Santocanale.
\newblock A nice labelling for tree-like event structures of degree 3.
\newblock {\em Inform. and Comput.}, 208(6):652--665, 2010.

\bibitem[Wis21]{wise:structure}
Daniel~T. Wise.
\newblock {\em The structure of groups with a quasiconvex hierarchy}, volume 209 of {\em Annals of Mathematics Studies}.
\newblock Princeton University Press, Princeton, NJ, 2021.

\end{thebibliography}

\end{document}